\numberwithin{equation}{section}
\newtheorem{theorem}{Theorem}[section]
\newtheorem{lemma}[theorem]{Lemma}
\newtheorem{prop}[theorem]{Proposition}
\newtheorem{cor}[theorem]{Corollary}
\newtheorem{Th}{Theorem}
\newtheorem{Def}{Definition}
\newtheorem{property}[theorem]{}
\newtheorem{ex}[theorem]{Example}
\newtheorem{corollary}[theorem]{Corollary}
\theoremstyle{definition}
\newtheorem{example}[theorem]{Example}
\newtheorem{remark}[theorem]{Remark}
\newtheorem{notation}[theorem]{Notation}
\newtheorem{definition}[theorem]{Definition}
\newcommand{\bN}{\mathbb{N}}
\newcommand{\F}{\mathcal{F}}
\newcommand{\E}{\mathcal{E}}
\newcommand{\G}{\mathcal{G}}
\newcommand{\C}{\mathcal{C}}
\renewcommand{\L}{\mathcal{L}}
\newcommand{\tDelta}{\tilde{\Delta}}
\newcommand{\tN}{\widetilde{\mathcal{N}}}
\newcommand{\tE}{\widetilde{\mathcal{E}}}
\newcommand{\tT}{\widetilde{T}}
\newcommand{\tS}{\widetilde{S}}
\newcommand{\tF}{\widetilde{\mathcal{F}}}
\newcommand{\M}{\mathcal{M}}
\newcommand{\N}{\mathcal{N}}
\newcommand{\K}{\mathcal{K}}
\newcommand{\D}{\mathbf{D}}
\newcommand{\X}{\mathcal{X}}
\newcommand{\Y}{\mathcal{Y}}
\renewcommand{\H}{\mathcal{H}}
\newcommand{\wL}{\widetilde{\mathcal{L}}}
\newcommand{\Hom}{\operatorname{Hom}}
\newcommand{\Aut}{\operatorname{Aut}}
\newcommand{\Inn}{\operatorname{Inn}}
\newcommand{\Syl}{\operatorname{Syl}}
\newcommand{\m}{\mathcal}
\newcommand{\ov}{\overline}
\newcommand{\id}{\operatorname{id}}
\newcommand{\Ac}{\operatorname{A}^\circ}
\newcommand{\One}{\operatorname{\mathbf{1}}}
\newcommand{\W}{\mathbf{W}}
\newcommand{\fN}{\mathfrak{N}}
\newcommand{\hyp}{\mathfrak{hyp}}
\newcommand{\fC}{\mathfrak{C}}
\newcommand{\subn}{\unlhd\!\unlhd\;}
\newcommand{\Comp}{\operatorname{Comp}}
\def \<{\langle }
\def \>{\rangle }
\renewcommand{\phi}{\varphi}
\title[Fusion systems and localities -- a dictionary]{Fusion systems and localities -- a dictionary}
\author[A.~Chermak]{Andrew Chermak}
\address{Kansas State University, Manhattan Kansas}
\email{chermak@math.ksu.edu}
\author[E.~Henke]{Ellen Henke}
\address{Institut f{\"u}r Algebra, Fakult{\"a}t Mathematik, Technische Universit{\"a}t Dresden, 01062 Dresden, Germany}
\email{ellen.henke@tu-dresden.de}
\thanks{The second author was partially supported by EPSRC grant no EP/R010048/1. The second author also would like to thank the Isaac Newton Institute for Mathematical Sciences, Cambridge, for support und hospitality during the programme Groups, representations and applications, where some of the work on this paper was undertaken and supported by EPSRC grant no EP/R014604/1.}
\begin{document}

\begin{abstract}
Linking systems were introduced to provide algebraic models for $p$-completed classifying spaces of fusion systems. Every linking system over a saturated fusion system $\F$ corresponds to a group-like structure called a locality. Given such a locality $\L$, we prove that there is a one-to-one correspondence between the partial normal subgroups of $\L$ and the normal subsystems of the fusion system $\F$. This is then used to obtain a kind of dictionary, which makes it possible to translate between various concepts in localities and corresponding concepts in fusion systems. As a byproduct, we obtain new proofs of many known theorems about fusion systems and also some new results. For example, we show in this paper that, in any saturated fusion system, there is a sensible notion of a product of normal subsystems. 
\end{abstract}

\maketitle





\section{Introduction}

This paper is intended to provide a bridge between the two theories referenced in the title, and 
to illustrate the usefulness of that bridge by proving theorems about fusion systems which have 
previously been out of reach. For example, it is known (see \cite{Henke:2015a}) that the product of two 
partial normal subgroups of a (finite) locality $\L$ is itself a partial normal subgroup of $\L$, 
and we will be able to translate that result into a corresponding theorem on fusion systems. 
Thus, if $\E_1$ and $\E_2$ are normal 
subsystems of a saturated fusion system $\F$ over a finite $p$-group, then there is a product system 
$\E_1\E_2$ which is normal in $\F$ - a result which has hitherto been known to hold only in 
some very special cases. We also obtain new proofs of several known results about fusion systems. Further applications of the one-to-one correspondences shown in this paper are given in \cite{Henke:NK,Henke:NEX}.

\smallskip

The fusion systems (and their related structures) to be considered here will always be 
defined with respect to a finite $p$-group for a fixed prime $p$.   
The theory of such fusion systems has 
undergone a rapid development, not least because of a series of papers by Michael Aschbacher which lay 
the foundation for a new treatment of the Classification of the Finite Simple Groups. 
Part of our motivation is to enrich the conceptual basis for Aschbacher's ongoing program, and to 
point the way toward possible simplifications in it. 

\smallskip

Localities were introduced in \cite{Chermak:2013}, for the purpose of proving that to every 
saturated fusion system $\F$ there corresponds a unique ``classifying space" which, when viewed 
combinatorially (in analogy with the simplicial set underlying the classifying space of a finite 
group) is a ``linking system" $\L$ as defined in \cite{BLO2}. The point of view taken in \cite{Chermak:2013} was 
to view $\L$ differently, as a finite ``partial group" having a ``Sylow $p$-subgroup" $S$, and 
where there is a set $\Delta$ of subgroups of $S$ which encodes 
the set $\D$ of $n$-tuples of elements of $\L$ which can be multiplied together under 
the product $\Pi$ associated with $\L$. In particular, for each $P\in\Delta$, 
the normalizer $N_{\L}(P)$ is a subgroup of $\L$. The fusion system $\F$ is recoverable from 
$\L$ as the category $\F_S(\L)$ whose objects are the subgroups of $S$ and whose morphisms are 
compositions of conjugation maps via elements of $\L$. These notions and others that are basic to fusion systems and localities will be reviewed, or 
references given for them, in Sections \ref{S:NormalSubsystems} and \ref{LocalitiesSection}, below.

\begin{Def}
 \begin{itemize}
  \item A finite group $G$ is \emph{of characteristic $p$} if $C_G(O_p(G))\leq O_p(G)$, where $O_p(G)$ denotes the largest normal $p$-subgroup of $G$.
 \item A locality $(\L,\Delta,S)$ over a fusion system $\F$ is called \emph{proper} if $\F$ is saturated, $\F^{cr}\subseteq\Delta$  and $N_\L(P)$ is of characteristic $p$ for every $P\in\Delta$. 
 \end{itemize}
\end{Def}

We say that $(\L,\Delta,S)$ is a locality \emph{over} $\F$ if $\F=\F_S(\L)$. Proper localities are called \emph{linking localities} in \cite{Henke:2016,Henke:2015,Henke:2020,Henke:Regular} to emphasize the connection with their origins in \cite{BLO2}.

\smallskip

Let $\L$ be a proper locality over $\F=\F_S(\L)$ and let $\Delta_0$ be the set of subgroups $P$ 
of $S$ such that $P$ contains a member of $\F^{cr}$. Then $\Delta_0$ defines the structure of a proper 
locality $\L_0$, having the same fusion system $\F$, and $\Delta_0$ is the smallest possible choice for a 
set $\Delta$ of objects which supports such a structure. There are various other choices which it will be useful to single out. In particular, there is a \emph{largest} possible choice 
for $\Delta$, consisting of the set $\F^s$ of subgroups of $S$ which are \emph{$\F$-subcentric} (cf. 
Definition \ref{D:Subcentric}), and there are the intermediate collections $\F^q$ of \emph{$\F$-quasicentric} 
subgroups of $S$ (first introduced in \cite{BCGLO1}), and $\F^c$ of \emph{$\F$-centric} subgroups of $S$. 

\smallskip

One further possible collection $\Delta$ of objects, denoted $\delta(\F)$, will play a major role in 
this paper. In order to describe it, and in order to state some of our main theorems, we need to consider the 
notions of \emph{partial normal subgroup} $\N\unlhd\L$, and of normal subsystem $\E\unlhd\F$. 

\smallskip

The notion of normal subsystem of a saturated fusion system was introduced by Aschbacher \cite{Aschbacher:2008}, and 
it led to a ``local theory" of saturated fusion systems $\F$ which in many respects mirrors the 
$p$-local theory of finite groups. Thus, in Aschbacher's theory (see especially \cite{Aschbacher:2011} or \cite[Chapter~2]{Aschbacher/Kessar/Oliver:2011}) 
one has the notion of the generalized Fitting subsystem $F^*(\F)$, the quasisimple \emph{components} of 
$\F$, and the \emph{layer} $E(\F)$ consisting of the ``product" of the ``components" of $\F$. As will be seen 
in Lemma~\ref{L:deltaF}, one way to define $\delta(\F)$ is as the set of all subgroups $P$ of $S$ such that $P$ 
contains a member of $F^*(\F)^s$. An equivalent definition of $\delta(\F)$ will be given much earlier, 
in Definition~\ref{D:Regular}, after the notion of $F^*(\L)$ has been reviewed.

\smallskip

A partial normal subgroup $\N\unlhd\L$ of the locality $\L$ (or, more generally, of any partial 
group $\L$) is, first of all a partial subgroup $\N$ of $\L$. This means that $\N$ is non-empty, 
that $\N$ 
is closed under the inversion map on $\L$, and that whenever $w$ is an $n$-tuple of elements of $\N$ 
such that $w$ lies in the domain $\D$ for the product $\Pi$ with which $\L$ is 
(partially) equipped, we then have $\Pi(w)\in\N$. In order for the partial subgroup $\N$ of $\L$ 
to be normal in $\L$ we require further that 
\[x^g:= \Pi(g^{-1},x,g)\in\N\] 
for all triples $(g^{-1},x,g)\in\D$ such that $x\in\N$. 

\smallskip

The key result of this paper is that there is a perfect correspondence between the two notions: 
``normal subsystem of a saturated fusion system" and ``partial normal subgroup of a proper locality". 
For $\N\unlhd\L$ and for $T=S\cap\N$, define $\F_T(\N)$ to be the fusion system on 
$T$ which is generated by conjugation maps via elements of $\N$.

\begin{Th}\label{main}
Suppose $(\L,\Delta,S)$ is a proper locality over $\F$, let $\fN(\L)$ be the set of partial normal subgroups of $\L$, and let $\fN(\F)$ be the set of normal subsystems of $\F$. Then $\fN(\F)$ is a poset under the normality relation $\unlhd$. Moreover, there exists a bijection 
\[\Psi=\Psi_\L\colon\fN(\L)\rightarrow\fN(\F)\]
such that the following hold:
\begin{itemize}
\item [(a)] For every $\N\unlhd\L$, $\Psi(\N)$ is a fusion system over $S\cap\N$ and the smallest normal subsystem of $\F$ containing $\F_{S\cap \N}(\N)$.
\item [(b)] If $F^*(\F)^{cr}\subseteq\Delta$ (which is in particular the case if $\delta(\F)\subseteq\Delta$ or $\F^q\subseteq\Delta$), then \[\Psi(\N)=\F_{S\cap\N}(\N).\] 
\item [(c)] The map $\Psi$ is an isomorphism of posets from $(\fN(\L),\subseteq)$ to $(\fN(\F),\unlhd)$. 
\end{itemize}
\end{Th}

In part (c) of the theorem above, it is important that we consider $\fN(\F)$ as a poset under $\unlhd$ rather than under inclusion, since the map $\Psi^{-1}$ is not necessarily inclusion-preserving as we show in Example~\ref{E:1}. By means of Theorem~\ref{main} it is possible to produce a sort of ``bilingual dictionary", translating between the 
local theory of fusion systems and the theory of localities via the mapping $\Psi$. As a consequence, one obtains proofs of theorems on fusion systems. We will describe this now in more detail.

\smallskip

In general, if $\L$ is a locality (not necessarily proper) and $\N_1$ and $\N_2$ are partial normal subgroups, then it is a triviality that $\N_1\cap\N_2\unlhd\L$. One also has 
the non-trivial fact \cite{Henke:2015a} that 
\[\N_1\N_2:=\{\Pi(x,y)\colon x\in\N_1,\;y\in\N_2,\;(x,y)\in\D\}\unlhd\L.\]
These constructions, along with Theorem~\ref{main}, lead to the following two theorems. Part (a) of Theorem~\ref{T:Intersections} was first proved by Aschbacher \cite[Theorem~1]{Aschbacher:2011}.

\begin{Th}\label{T:Intersections}
Let $\F$ be a saturated fusion system over $S$. Suppose that $\E_1$ and $\E_2$ are normal subsystems of $\F$ over $T_1$ and $T_2$ respectively. Then there exists a subsystem $\E_1\wedge\E_2$ such that the following hold:
\begin{itemize}
 \item [(a)] The subsystem $\E_1\wedge\E_2$ is a normal subsystem of $\F$ over $T_1\cap T_2$ contained in $\E_1\cap\E_2$. Moreover, it is the largest normal subsystem of $\F$ which is normal in $\E_1$ and $\E_2$.
 \item [(b)] Every normal subsystem of $\F$ which is normal in $\E_1$ and $\E_2$ is also normal in $\E_1\wedge\E_2$.  
 \item [(c)] Suppose $(\L,\Delta,S)$ is a proper locality over $\F$ and $\Psi=\Psi_\L$ is the map from Theorem~\ref{main}. If $\N_1$ and $\N_2$ are partial normal subgroups of $\L$ with $\E_i=\Psi(\N_i)$ for each $i=1,2$, then 
\[\Psi(\N_1\cap\N_2)=\E_1\wedge\E_2.\]
\end{itemize}
\end{Th}

Since the intersection of sets (and thus of partial normal subgroups) is an associative operation, there is indeed an associative notion of a ``normal intersection'' of an arbitrary number of normal subsystems (cf. Theorem~\ref{T:IntersectionsI}). 

\begin{Th}\label{ProductCorollary}
Let $\F$ be a saturated fusion system over $S$ and let $\m{E}_i$ be a normal subsystem of $\F$ over $T_i$ for $i=1,2$. Set $T:=T_1T_2$. Then there exists a subsystem $\E_1\E_2$ of $\F$ such that the following hold:
\begin{itemize}
\item [(a)] $\E_1\E_2$ is the (unique) smallest normal subsystem of $\F$ over $T$ containing $\E_1$ and $\E_2$ as a normal subsystems.
\item [(b)] If $\tE\subn\F$ is a subnormal subsystem such that $\E_1$ and $\E_2$ are subnormal in $\tE$ then $\E_1$, $\E_2$, and $\E_1\E_2$ are normal subsystems of $\tE$.
\item [(c)] $\E_1\E_2$ is generated by $\E_1T$ and $\E_2T$.
\item [(d)] Suppose $(\L,\Delta,S)$ is a proper locality over $\F$. If $\Psi=\Psi_\L$ is the map from Theorem~\ref{main} and  $\N_i:=\Psi^{-1}(\E_i)$ for $i=1,2$, then  
\[\Psi(\N_1\N_2)=\E_1\E_2.\] 
\end{itemize}
\end{Th}

Since the partial product in a locality $\L$ satisfies a natural associativity condition, it can indeed be shown that there is an associative notion of a product of any number of partial normal subgroups of $\L$, leading to products of any number of normal subsystems of $\F$ (cf. Theorem~\ref{T:ProductCorollaryDetails}). A construction of a normal subsystem $\E_1\E_2$ was given before by Aschbacher \cite[Theorem~3]{Aschbacher:2008} in the special case that $T_1$ and $T_2$ are commuting subgroups.

\smallskip

We used in Theorem~\ref{ProductCorollary}(c) that, if $\F$ is a saturated fusion system over $S$ then (see Definition~\ref{D:Product}) one has the notion of a product subsystem $\E R$ where $\E$ is a normal subsystem of $\F$ and where 
$R$ is a subgroup of $S$. On the other hand, if $\N\unlhd\L$ is a partial normal subgroup of a  
locality $(\L,\Delta,S)$ over $\F$, and $R\leq S$ is a subgroup of $S$, then the product $\N R$ is a partial subgroup of $\L$. As we state in the next theorem the two concepts are closely related. A proper locality $(\L,\Delta,S)$ over $\F$ is called \emph{regular} if $\Delta=\delta(\F)$.

\begin{Th}\label{T:mainProductWithPSubgroup}
 Let $(\L,\Delta,S)$ be a proper locality over $\F$, and let $\N\unlhd\L$ be a partial normal subgroup. Set $T:=S\cap\N$ and $\E=\F_T(\N)$, and let $R\leq S$ be a subgroup of $S$. Then the following hold:
\begin{itemize}
 \item [(a)] If $(\L,\Delta,S)$ is regular, then $(\N R,\delta(\E R),TR)$ is a regular locality over $\E R$.
 \item [(b)] If $\delta(\F)\subseteq\Delta$, then $\F_{T R}(\N R)=\E R$. Moreover, $(\N S,\Delta,S)$ and $(\N S,\Delta\cup\delta(\E S),S)$ are proper localities over $\E S$.  
 \item [(c)] If $\F^q\subseteq\Delta$, then $\F_{T R}(\N R)=\E R$ whenever $C_S(\N)\leq R$. Moreover, $(\N S,\Delta,S)$ is a proper locality over $\E S$.  
\end{itemize}
\end{Th}

In order for the next part of our dictionary to be intelligible and meaningful some preliminary discussion may be helpful. First of all, since a locality $\L$ is something like a finite group, there are definitions of certain 
partial normal subgroups associated with $\L$ that are suggested by the corresponding definitions in 
group theory. For example, if $G$ is a finite group and $p$ is a prime then $O^p(G)$ is the intersection 
of the normal subgroups $N\unlhd G$ such that $G/N$ is a $p$-group - and one finds then that 
$G/O^p(G)$ is a $p$-group. The same definition (and the same conclusion) apply to proper localities and 
partial normal subgroups, yielding the notion of $O^p(\L)$ and similarly $O^{p^\prime}(\L)$ (cf. Subsection~\ref{SS:ResiduesLocalities}). The 
definition of $F^*(\L)$ (reviewed here in Definition~\ref{D:FstarELLocalities}) is based on the fact that if $G$ is a finite group 
then $F^*(G)$ may be characterized as the smallest normal subgroup $N\unlhd G$ such that $N$ contains 
every normal nilpotent subgroup of $G$, and such that $C_G(N)\leq N$. Similar formulations apply to the 
subgroup $E(G)$ and to $E(\L)$. In the case of \emph{regular} localities, it turns out that partial subnormal subgroups form again regular localities, which leads to a natural notion of components of regular localities. Indeed, if $\L$ is regular, then $E(\L)$ can be described as the product of components of $\L$.

\smallskip

In the definition of $F^*(\L)$ it is used that, for every partial normal subgroup $\N$ of $\L$, there is a largest partial normal subgroup $\N^\perp$ of $\L$ which ``commutes'' with $\N$ and thus should be thought of as a replacement for the centralizer of $\N$ (see Definition~\ref{D:Commute}). If $\L$ is regular, then $\N^\perp$ equals the actual centralizer $C_\L(\N)$ which is the set of all $g\in\L$ such that $(g^{-1},x,g)\in\D$ and $\Pi(g^{-1},x,g)=x$ for all $x\in\N$. 

\smallskip

The definitions of $O^p(\F)$, $F^*(\F)$, ad lib., for a saturated fusion system $\F$ are also motivated by their group-theoretic analogs, but are  
(notoriously) much more difficult to formulate (cf. Subsections~\ref{SS:Fusionppowerindex}, \ref{SS:CFE}, \ref{SS:ResiduesFusionLoc} and Definition~\ref{D:ComponentsEFFstarF}). It is because of this that we believe our 
dictionary to be meaningful.

\begin{Th}\label{T:mainDetails}
 Let $(\L,\Delta,S)$ be a proper locality over $\F$ and let $\Psi$ be the map from Theorem~\ref{main}. Then the following hold.
\begin{itemize}
\item [(a)] For every subgroup $R\leq S$, we have $R\unlhd\L$ if and only if $R\unlhd\F$. In particular, $\Psi(R)=R$ and $\Psi(O_p(\L))=O_p(\L)=O_p(\F)$.  
\item [(b)] $\Psi(O^p(\L))=O^p(\F)$ and $\Psi(O^{p^\prime}(\L))=O^{p^\prime}(\F)$.
\item [(c)] If $\N\unlhd\L$ and $\E:=\Psi(\N)$, then $\Psi(\N^\perp)=C_\F(\E)$.  
\item [(d)] $\Psi(F^*(\L))=F^*(\F)$ and $\Psi(E(\L))=E(\F)$. 
\end{itemize}
\end{Th}

\begin{Th}\label{mainSubnormalGeneralizedFitting}
Suppose $(\L,\Delta,S)$ is a regular locality over $\F$. Write $\mathfrak{S}(\L)$ for the set of partial subnormal subgroups of $\L$ and $\mathfrak{S}(\F)$ for the set of subnormal subsystems of $\F$. Then the map 
\[\hat{\Psi}=\hat{\Psi}_\L\colon \mathfrak{S}(\L)\rightarrow\mathfrak{S}(\F),\H\mapsto\F_{S\cap\H}(\H)\]
is well-defined and an isomorphism of posets from $(\mathfrak{S}(\L),\subseteq)$ to $(\mathfrak{S}(\F),\subn)$. Moreover, $\hat{\Psi}$ restricts to the map $\Psi=\Psi_\L$ from Theorem~\ref{main}, and it induces a bijection from the set of components of $\L$ to the set of components of $\F$.
\end{Th}

The results summarized in Theorems~\ref{T:mainDetails} and \ref{mainSubnormalGeneralizedFitting} will be employed in Subsection~\ref{SS:EFNewProofs} to give new proofs of known results concerning components, the layer, and the generalized Fitting subsystem of a saturated fusion system.

\smallskip

The final result to be mentioned in this introduction concerns the ``subnormal intersections" 
developed by Aschbacher \cite[7.2.2]{Aschbacher:2011} for saturated fusion systems, and for which our result 
may be seen to provide a more precise formulation (as well as simplified proofs). It is 
essentially a refinement of Theorem~\ref{T:Intersections} above, except that in the final part we need to restrict attention to regular localities.

\begin{Th}\label{T:IntersectionsSubnormal}
 Let $\F$ be a saturated fusion system over $S$. Suppose that $\E_1$ and $\E_2$ are subnormal subsystems of $\F$ over $T_1$ and $T_2$ respectively. Then there exists a subsystem $\E_1\wedge\E_2$ of $\F$ such that the following hold:
\begin{itemize}
 \item [(a)] $\E_1\wedge\E_2$ is a subnormal subsystem of $\F$ over $T_1\cap T_2$ contained in $\E_1\cap\E_2$. Moreover, $\E_1\wedge\E_2$ is the largest subnormal subsystem of $\F$ which is subnormal in $\E_1$ and $\E_2$. 
 \item [(b)] Every subnormal subsystem of $\F$ which is subnormal in $\E_1$ and $\E_2$ is also subnormal in $\E_1\wedge\E_2$. 
 \item [(c)] If $\E_1\unlhd\F$, then $\E_1\wedge\E_2\unlhd \E_2$. 
 \item [(d)] Suppose $(\L,\Delta,S)$ is a regular locality over $\F$ and $\hat{\Psi}_\L$ is the map from Theorem~\ref{mainSubnormalGeneralizedFitting}. If $\H_i\subn\L$ with $\hat{\Psi}_\L(\H_i)=\E_i$ for $i=1,2$, then $\hat{\Psi}_\L(\H_1\cap\H_2)=\E_1\wedge\E_2$.
\end{itemize}
\end{Th}

Since $\hat{\Psi}_\L$ restricts to $\Psi_\L$, it follows from Theorem~\ref{T:Intersections}(c) and Theorem~\ref{T:IntersectionsSubnormal}(d) (or from the constructions of $\E_1\wedge\E_2$ in the proofs) that, for any two normal subsystems $\E_1$ and $\E_2$ of $\F$, the subsystem $\E_1\wedge\E_2$ from Theorem~\ref{T:Intersections} coincides with the subsystem $\E_1\wedge\E_2$ in Theorem~\ref{T:IntersectionsSubnormal}. 

\smallskip

We wish to add some remarks concerning the demands that we make on the reader concerning background material. For fusion fusion systems, beyond the basics to be found (for example) in \cite{Aschbacher/Kessar/Oliver:2011} (see Remark~\ref{R:FusionSystemsBackground}) we need the following concepts, and results concerning them, for a saturated fusion system $\F$ over $S$. 
\begin{itemize}
 \item the centralizer in $\F$ of a normal subsystem of $\F$, which is again a normal subsystem;  
 \item the product of two normal subsystems $\F_i$ over $S_i$ with $\F_i\subseteq C_\F(S_{3-i})$ for $i=1,2$, which is also a normal subsystem;
 \item the product of a normal subsystem of $\F$ with a subgroup of $S$, which can be shown to be a saturated subsystem of $\F$.
\end{itemize}
All of the above were introduced by Aschbacher in \cite{Aschbacher:2011}. For our proofs we rely on the alternative treatment of these concepts in \cite{Henke:2013} and \cite{Henke:2018}.

\smallskip

Background material on localities is provided in Section~\ref{LocalitiesSection}. Above all, the proof of Theorem~\ref{main} relies 
on the existence and uniqueness of a linking system \cite{Oliver:2013,Glauberman/Lynd} (or, via the Appendix to \cite{Chermak:2013} a 
proper locality) associated with a given saturated fusion system. Our proofs use moreover \cite{Chermak:2015,Henke:2015a,Henke:2016}, parts of \cite{Henke:2015}, and the theory developed in \cite{ChermakII,ChermakIII} in the form stated (and proved) in \cite{Henke:2020,Henke:Regular}.

\section{Definitions and preliminary results on fusion systems}\label{S:NormalSubsystems}

\textbf{Throughout this section let $\F$ be a fusion system over $S$.}

\smallskip

The reader is referred to \cite[Part~I]{Aschbacher/Kessar/Oliver:2011} for an introduction to the theory of fusion systems. We will adapt the notation and terminology from this reference with the exception that we will write group homomorphisms on the right hand side of the argument similarly as in \cite[Part~II]{Aschbacher/Kessar/Oliver:2011}. Furthermore, it will be convenient to write $\F^f$ for the set of fully $\F$-normalized subgroups of $S$. We will refer to the Sylow axiom and the extension axiom for saturated fusion systems as stated in \cite[Proposition~I.2.5]{Aschbacher/Kessar/Oliver:2011} most of the time without further reference.

\smallskip

We will build in particular on the definition of $\F$-invariant, weakly normal and normal subsystem of $\F$ as  introduced in \cite[Definition~I.6.1]{Aschbacher/Kessar/Oliver:2011}. Moreover, we will refer to the \emph{Frattini condition} and to the \emph{extension condition}  as introduced in that definition. 

\smallskip

If $R\leq S$ and $\C$ is a collection of injective group homomorphism between subgroups of $R$, then we write $\<\C\>_R$ for the smallest fusion system over $R$ containing all the elements of $\C$ as morphisms.

\begin{remark}\label{R:FusionSystemsBackground}
Since the main results of this paper are used to reprove many properties of fusion systems, let us summarize precisely which background references on fusion systems we rely on. We need Sections~I.1-I.7, Section~II.5, Theorem~II.7.5 and Theorem~III.5.10 in the book by Aschbacher, Kessar and Oliver \cite{Aschbacher/Kessar/Oliver:2011}. Complete proofs of most of the results we use are given in the book. The only exceptions are Theorems~I.3.10 and I.7.4 in \cite{Aschbacher/Kessar/Oliver:2011}; for the proofs the reader is referred to \cite[Theorem~2.2]{BCGLO1} and \cite[Theorem~4.3]{BCGLO2}. Our proofs rely moreover on \cite{Henke:2013} and \cite{Henke:2018}.
\end{remark}

\subsection{Morphisms of fusion systems}

We continue to assume that $\F$ is a fusion system over $S$. In addition, throughout this subsection, let $\tF$ be a fusion system over $\tS$. 

\begin{definition}
A group homomorphism $\alpha\colon S\longrightarrow \tS$ is said to \textit{induce a morphism} from $\F$ to $\tF$ if, for each $\phi\in\Hom_\F(P,Q)$, there exists $\psi\in\Hom_{\tF}(P\alpha,Q\alpha)$ such that $(\alpha|_P)\psi=\phi(\alpha|_Q)$. 
\end{definition}

If $\alpha$ induces a morphism from $\F$ to $\tF$, then for any $\phi\in\Hom_\F(P,Q)$, the map $\psi\in\Hom_{\tF}(P\alpha,Q\alpha)$ as in the above definition is uniquely determined. So if $\alpha$ induces a morphism from $\F$ to $\tF$, then $\alpha$ induces a map 
\[\alpha_{P,Q}\colon\Hom_\F(P,Q)\longrightarrow \Hom_{\tF}(P\alpha,Q\alpha).\] 
Together with the map $P\mapsto P\alpha$ from the set of objects of $\F$ to the set of objects of $\tF$ this gives a functor $\alpha^*$ from $\F$ to $\tF$. Moreover, $\alpha$ together with the maps $\alpha_{P,Q}$ ($P,Q\leq S$) is a morphism of fusion systems in the sense of \cite[Definition~II.2.2]{Aschbacher/Kessar/Oliver:2011}.

\begin{definition}
Suppose $\alpha\colon S\longrightarrow \tS$ induces a morphism from $\F$ to $\tF$. Then $\alpha$ is said to \emph{induce an epimorphism} from $\F$ to $\tF$ if $\alpha$ is surjective as a map $S\longrightarrow \tS$ and, for all $P,Q\leq S$ with $\ker(\alpha)\leq P\cap Q$, the map $\alpha_{P,Q}$ is surjective. If $\alpha$ is in addition injective, then we say that $\alpha$ \emph{induces an isomorphism} from $\F$ to $\tF$. Write $\Aut(\F)$ for the set of automorphisms of $S$ which induce an isomorphism from $\F$ to $\F$. Accordingly, $\alpha\in\Aut(S)$ is said to induce an automorphism of $\F$ if $\alpha\in\Aut(\F)$. 
\end{definition}

If $\alpha\colon S\longrightarrow \tS$ is an isomorphism of groups, then $\alpha$ induces an isomorphism from $\F$ to $\tF$ if and only if, for all $P,Q\leq S$ and every group homomorphism $\phi\colon P\longrightarrow Q$, the following equivalence holds:
\[\phi\in\Hom_\F(P,Q)\Longleftrightarrow \alpha^{-1}\phi\alpha\in\Hom_{\tF}(P\alpha,Q\alpha).\]
If so, then $\alpha_{P,Q}$ as above is given by $\phi\alpha_{P,Q}=\alpha^{-1}\phi\alpha$ for all $\phi\in\Hom_\F(P,Q)$.

\begin{definition}\label{D:MorphismFusionSystemImage}
Let $\E$ be a subsystem of $\F$ over $T\leq S$ and suppose $\alpha\colon S\longrightarrow \tS$ induces a morphism from $\F$ to $\tF$. Then $\E^\alpha$ denotes the subsystem of $\tF$ over $T\alpha$ with $\Hom_{\E^\alpha}(P\alpha,Q\alpha)=\Hom_\E(P,Q)\alpha_{P,Q}$ for all $P,Q\leq T$ with $\ker(\alpha)\cap T\leq P\cap Q$.
\end{definition}

In the situation above notice that $\alpha|_T$ induces an epimorphism from $\E$ to $\E^\alpha$.

\subsection{Centric, quasicentric and centric-radical subgroups}

\begin{definition}\label{D:Fclosed}
Let $\Delta$ be a set of subgroups of $S$.
\begin{itemize}
 \item The set $\Delta$ is said to be \emph{closed under $\F$-conjugacy} if every $\F$-conjugate of an element of $\Delta$ is an element of $\Delta$. 
 \item $\Delta$ is said to be \emph{$\F$-closed} if $\Delta$ is overgroup-closed in $S$ and closed under $\F$-conjugacy. 
\end{itemize}
\end{definition}

There are several $\F$-closed collections of subgroups of $S$ which play a particularly important role, for example the set $\F^c$ of $\F$-centric subgroups, the set $\F^q$ of $\F$-quasicentric subgroups and, if $\F$ is saturated, the set $\F^s$ of $\F$-subcentric subgroups. We will introduce $\F$-subcentric subgroups in Definition~\ref{D:Subcentric}. A subgroup $P\leq S$ is called \emph{$\F$-quasicentric} if $C_\F(Q)=\F_{C_S(Q)}(C_S(Q))$ for some fully centralized $\F$-conjugate $Q$ of $P$. The reader is referred to  \cite[Definition~I.3.1]{Aschbacher/Kessar/Oliver:2011} for the definition of $\F$-centric and $\F$-centric-radical subgroups of $S$. The set $\F^{cr}$ of $\F$-centric-radical subgroups is closed under $\F$-conjugacy, but not overgroup-closed in $S$. We have the following lemma.

\begin{lemma}\label{L:EcFinvariant}
If $\E$ is an $\F$-invariant subsystem of $\F$, then $\E^c$ and $\E^{cr}$ are closed under $\F$-conjugacy.
\end{lemma}

\begin{proof}
By definition of $\F$-invariant subsystems, every element of $\Aut_\F(T)$ induces an automorphism of $\E$. Therefore,  $\Aut_\F(T)$ acts on $\E^c$ and $\E^{cr}$. Moreover, we have $P^\E\subseteq\E^c$ for all $P\in\E^c$ and similarly $P^\E\subseteq\E^{cr}$ for all $P\in\E^{cr}$. Hence, the assertion follows from the Frattini condition for $\F$-invariant subsystems.
\end{proof}

\begin{lemma}\label{L:QuasicentricApply}
If $\F$ is saturated and $P\in\F^q$, then every saturated subsystem of $C_\F(P)$ is the fusion system of a $p$-group.
\end{lemma}

\begin{proof}
Suppose $\F$ is saturated and $P$ is $\F$-quasicentric. Then we may pick $Q\in P^\F$ fully centralized such that $C_\F(Q)=\F_{C_S(Q)}(C_S(Q))$. By the extension axiom, there exists $\phi\in\Hom_\F(PC_S(P),S)$ such that $P\phi=Q$. Suppose $\G$ is a saturated subsystem of $C_\F(P)$ over $R\leq C_S(P)$. For every $U\leq R$, we have $\Aut_\G(U)\cong \phi^{-1}\Aut_\G(U)\phi\leq \Aut_{C_\F(Q)}(U\phi)$. Hence, $\Aut_\G(U)$ is a $p$-group and, if $U\in\G^f$, then the Sylow axiom implies $\Aut_\G(U)=\Aut_R(U)$ as $\G$ is saturated. It follows now from Alperin's Fusion Theorem \cite[Theorem~I.3.6]{Aschbacher/Kessar/Oliver:2011} that $\G=\F_R(R)$. 
\end{proof}

\subsection{Groups of characteristic $p$ and models}

Recall from the introduction that a group $G$ is \emph{of characteristic $p$} if $C_G(O_p(G))\leq O_p(G)$. A group of characteristic $p$ is called a \emph{model} for $\F$ if $S$ is a Sylow $p$-subgroup of $G$ and $\F=\F_S(G)$. Notice that $\F$ is saturated by \cite[Theorem~2.3]{Aschbacher/Kessar/Oliver:2011} if there is a model for $\F$.

\smallskip

The fusion system $\F$ is called \emph{constrained} if $\F$ is saturated and $C_S(O_p(\F))\leq O_p(\F)$. If $\F$ is constrained, then the existence of a model for $\F$ follows from a Theorem of Broto, Castellana, Grodal, Levi and Oliver; see \cite[Proposition~C]{BCGLO1} or \cite[Theorem~III.5.10]{Aschbacher/Kessar/Oliver:2011}.

\begin{lemma}\label{L:MSCharp}
If $G$ is a group of characteristic $p$, then the following hold:
\begin{itemize}
 \item [(a)] Every subnormal subgroup of $G$ is of characteristic $p$.
 \item [(b)] For every $p$-subgroup $P$ of $G$, the normalizer $N_G(P)$ is of characteristic $p$. 
 \item [(c)] If $S\in\Syl_p(G)$ and $U\in \F_S(G)^c$, then $C_G(U)\leq U$.
\end{itemize}
\end{lemma}

\begin{proof}
For parts (a) and (b) see \cite[Lemma~1.2(a),(c)]{MS:2012b}. For the proof of (c), we may replace $U$ by any $G$-conjugate of $U$ in $S$. So we may assume without loss of generality that
\[N_S(U)\in\Syl_p(N_G(U)).\]
Then $C_S(U)\in Syl_p(C_G(U))$ as $C_G(U)\unlhd N_G(U)$. Since $U\in\F_S(G)^c$, we have $C_S(U)\leq U$. Setting $Q:=O_p(N_G(U))$, it follows  $[Q,C_G(U)]\leq C_Q(U)\leq C_S(U)\leq U$. Hence, $[Q,O^p(C_G(U))]=[Q,O^p(C_G(U)),O^p(C_G(U))]=1$ (cf. \cite[Lemma~A.2]{Aschbacher/Kessar/Oliver:2011}). By (b), $N_G(U)$ is of characteristic $p$. Hence, we have $O^p(C_G(U))\leq C_{N_G(U)}(Q)\leq Q$ and thus $O^p(C_G(U))=1$. This implies $C_G(U)=C_S(U)\leq U$, i.e. (c) holds.  
\end{proof}

\begin{lemma}\label{ModelLemma}
If $G$ is a model for $\F$, then for every subnormal subsystem $\E$ of $\F$, there exists a unique subnormal subgroup $H$ of $G$ with $\F_{S\cap H}(H)=\E$.
\end{lemma}

\begin{proof}
If $\E$ is a normal subsystem of $\F$, then by \cite[Theorem~II.7.5]{Aschbacher/Kessar/Oliver:2011}, there exists a unique normal subgroup $N$ of $G$ with $\E=\F_{S\cap N}(N)$. Moreover, by Lemma~\ref{L:MSCharp}(a), every normal subgroup $N$ of $G$ is of characteristic $p$ and thus a model for $\F_{S\cap N}(N)$. Thus, it follows by induction on the subnormal length that, for every subnormal subsystem $\E$ of $\F$, there exists a subnormal subgroup $H$ of $G$ with $\E=\F_{S\cap H}(H)$. It remains to show that $H$ is unique. 

\smallskip

Let $K$ and $H$ be subnormal subgroups of $G$ with $\E=\F_{S\cap H}(H)=\F_{S\cap K}(K)$. Then in particular, $T:=S\cap H=S\cap K$. By Lemma~\ref{L:MSCharp}(a), $H$ and $K$ are models for $\E$. Hence, by \cite[Theorem~III.5.10]{Aschbacher/Kessar/Oliver:2011}, $Q:=O_p(\E)$ is normal in $H$ and $K$, which implies $Q=O_p(H)=O_p(K)$. So $H$ and $K$ are also subnormal subgroups of $N_G(Q)$. Moreover, $N_G(Q)$ is of characteristic $p$ by Lemma~\ref{L:MSCharp}(b). If $S_0$ is a Sylow $p$-subgroup of $N_G(Q)$ containing $T$, then $T=S_0\cap H=S_0\cap K$ and $\E=\F_T(K)=\F_T(H)$ is subnormal in $\F_{S_0}(N_G(Q))$ by \cite[Proposition~I.6.2]{Aschbacher/Kessar/Oliver:2011}. So replacing $G$ by $N_G(Q)$, $S$ by $S_0$ and $\F$ by $\F_{S_0}(N_G(Q))$, we may and will assume from now on that $Q$ is normal in $G$. Choose a subnormal series
\[H=H_0\unlhd H_1\unlhd\dots\unlhd H_n=G.\]
Notice that, for all $i=1,\dots,n$, we have 
\[[O_p(G)\cap H_i,H]\leq [O_p(G)\cap H_i,H_{i-1}]\leq O_p(G)\cap H_{i-1}.\]
We will use the well-known property of coprime action stated e.g. in \cite[Lemma~A.2]{Aschbacher/Kessar/Oliver:2011}. As $O_p(G)=O_p(G)\cap H_n$ and $O_p(G)\cap H_0=O_p(G)\cap H\leq O_p(H)=Q$, this property yields that every $p^\prime$-element of $H$ acts trivially on $O_p(G)/Q$ and thus $[O_p(G),O^p(H)]\leq Q$. Similarly it follows that $[O_p(G),O^p(K)]\leq Q$. 

\smallskip

If $h\in H$ is a $p^\prime$-element, then $\F_T(H)=\F_T(K)$ implies the existence of $k\in K$ with $c_h|_Q=c_k|_Q$, i.e. with $hk^{-1}\in C_G(Q)$. We may choose $k$ to be a $p^\prime$-element as well. As $[O_p(G),h]\leq [O_p(G),O^p(H)]\leq Q$ and $[O_p(G),k]\leq [O_p(G),O^p(K)]\leq Q$, we have $hk^{-1}\leq C_G(O_p(G)/Q)\cap C_G(Q)$. Using again the property of coprime action stated in \cite[Lemma~A.2]{Aschbacher/Kessar/Oliver:2011} together with the fact that $G$ has characteristic $p$, we can conclude that $C_G(O_p(G)/Q)\cap C_G(Q)$ is a normal $p$-subgroup of $G$, as every $p^\prime$-element of $C_G(O_p(G)/Q)\cap C_G(Q)$ is contained in $C_G(O_p(G))\leq O_p(G)$ and thus trivial. Hence, $hk^{-1}\in C_G(O_p(G)/Q)\cap C_G(Q)\leq O_p(G)$. This shows $O^p(H)\leq O^p(K)O_p(G)$. Using a symmetric argument one concludes that $O^p(H)O_p(G)=O^p(K)O_p(G)$. As \[[O_p(G),O^p(H)]=[O_p(G),O^p(H),O^p(H)]\leq [Q,O^p(H)]\leq O^p(H),\]
it follows that $O^p(H)\unlhd O^p(H)O_p(G)$ and thus $O^p(H)=O^p(O^p(H)O_p(G))$. Similarly, we have $O^p(K)=O^p(O^p(K)O_p(G))$ and thus $O^p(H)=O^p(K)$. This implies $H=O^p(H)T=O^p(K)T=K$ as required.  
\end{proof}

\subsection{Normal subsystems of $p$-local subsystems}\label{SS:Normal}

In this subsection we show how normal subsystems of fusion systems lead to normal subsystems of $p$-local subsystems. Most lemmas we prove here are similar or identical to results by Aschbacher \cite{Aschbacher:2008}. However, since we aim to revisit Aschbacher's theory in later chapters, we have  chosen to give direct proofs.

\smallskip

\textbf{Throughout this subsection $\F$ is assumed to be saturated.}

\smallskip

From the results we prove in this subsection, only Lemma~\ref{LocalNormalSubsystems} is cited directly in the proof of Theorem~\ref{main}. The other results we state are used to prove Lemma~\ref{LocalNormalSubsystems} and also Lemma~\ref{L:EcsubsetFq} in Subsection~\ref{SS:CFE}. Lemma~\ref{L:LocalNormalConstrainedSubsystems}(a) and Lemma~\ref{LocalNormalSubsystems}  are moreover used in our background references \cite{Henke:2013,Henke:2018} (cf. Remark~\ref{R:ERProof} and Remark~\ref{R:CFEBackground}).

\begin{lemma}\label{L:WeaklyNormalLocal}
Let $\E$ be a weakly normal subsystem of $\F$ over $T\leq S$. Given $U\leq T$ with $U\in\F^f$, the following properties hold:
\begin{itemize}
\item [(a)] We have $U\in\E^f$. Moreover, $N_\E(U)$ is a weakly normal subsystem of $N_\F(U)$.
\item [(b)] If $U\in\E^c$, then for every $P$ with $U\leq P\leq N_T(U)$, we have $[P,C_S(U)]\leq U$ and $C_S(U)\leq N_S(P)$.
\item [(c)] If $U\in\E^c$, then $N_\F(UC_S(U))=N_{N_\F(U)}(UC_S(U))$ is constrained. Moreover, $N_\E(U)$ is a weakly normal subsystem of $N_\F(UC_S(U))$.
\end{itemize}
\end{lemma}

\begin{proof}
We will use throughout that, by \cite[Theorem~I.5.5]{Aschbacher/Kessar/Oliver:2011}, normalizers of fully $\F$-normalized subgroups of $S$ are saturated. 

\smallskip

\textbf{(a)} As $U\in\F^f$, it is a particular consequence of the just stated fact that the normalizer $N_\F(U)$ is saturated. By \cite[Lemma~2.3]{Henke:2018}, $U\in\E^f$ and so $N_\E(U)$ is saturated. Using the equivalent characterization of $\F$-invariant subsystems given in \cite[Proposition~I.6.4(d)]{Aschbacher/Kessar/Oliver:2011}, it is straightforward to see that $N_\E(U)$ is $N_\F(U)$-invariant. So (a) holds. 

\smallskip

\textbf{(b)} Suppose now that $U\in \E^c$. If $U\leq P\leq N_T(U)$, then
\[[P,C_S(U)]\leq [N_T(U),C_S(U)]\leq C_T(U)\leq U\leq P\]
and so (b) holds.

\smallskip

\textbf{(c)} As before let $U\in\E^c$. Observe first that $UC_S(U)\cap T=UC_T(U)=U$ as $U\in\E^c$. So the fact that $T$ is strongly closed implies that \[N_\F(UC_S(U))=N_{N_\F(U)}(UC_S(U)).\]
As $UC_S(U)$ is self-centralizing in $S$ and weakly closed in $N_\F(U)$, it follows that $N_{N_\F(U)}(UC_S(U))$ is saturated and constrained. If $N_\E(U)\subseteq N_\F(UC_S(U))$, then using that $N_\E(U)$ is $N_\F(U)$-invariant and appealing again to \cite[Proposition~I.6.4]{Aschbacher/Kessar/Oliver:2011}, one checks easily that $N_\E(U)$ is $N_\F(UC_S(U))$-invariant and thus weakly normal in $N_\F(UC_S(U))$. So it is sufficient to argue that $N_\E(U)\subseteq N_\F(UC_S(U))$.

\smallskip

Let $P\in N_\E(U)^{cr}$ and $\phi\in\Aut_{N_\E(U)}(P)$. As $N_\E(U)$ is saturated, by Alperin's fusion theorem \cite[Theorem~I.3.6]{Aschbacher/Kessar/Oliver:2011}, we only need to show that $\phi$ is a morphism in $N_\F(UC_S(U))$. Using \cite[Proposition~I.4.5]{Aschbacher/Kessar/Oliver:2011}, one observes that $U\leq P$. Clearly $P\leq N_T(U)$. So (b) gives $C_S(U)\leq N_S(P)$ and $[P,C_S(U)]\leq U$. Hence, $\Aut_{C_S(U)}(P)$ lies in the centralizer in $\Aut_{N_\F(U)}(P)$ of $P/U$ and $U$. This centralizer is a normal $p$-subgroup of $\Aut_{N_\F(U)}(P)$ (cf. e.g. \cite[Lemma~A.2]{Aschbacher/Kessar/Oliver:2011}). Hence it follows 
\[\Aut_{C_S(U)}(P)\leq O_p(\Aut_{N_\F(U)}(P)).\]
As $N_\F(U)$ is saturated, by \cite[Lemma~II.3.1]{Aschbacher/Kessar/Oliver:2011}, there exists $\alpha\in\Hom_{N_\F(U)}(N_{N_S(U)}(P),N_S(U))$ such that $P\alpha$ is fully $N_\F(U)$-normalized. Fixing such $\alpha$, it follows from the Sylow axiom that $\Aut_{N_S(U)}(P\alpha)$ is a Sylow $p$-subgroup of $\Aut_{N_\F(U)}(P\alpha)$. So $\chi:=\phi\alpha\in\Hom_{N_\F(U)}(P,P\alpha)$ and 
\[\chi^{-1}\Aut_{C_S(U)}(P)\chi\leq \chi^{-1}O_p(\Aut_{N_\F(U)}(P))\chi=O_p(\Aut_{N_\F(U)}(P\alpha))\leq \Aut_{N_S(U)}(P\alpha).\]
Since the extension axiom (\cite[Proposition~I.2.5]{Aschbacher/Kessar/Oliver:2011}) holds in $N_\F(U)$, it follows that $\chi$ extends to 
\[\hat{\chi}\in \Hom_{N_\F(U)}(PC_S(U),N_S(U)).\]
Notice that $C_S(U)\hat{\chi}=C_S(U)=C_S(U)\alpha$ and $P\hat{\chi}=P\chi=P\alpha$. Hence, $\hat{\chi}\alpha^{-1}\in\Aut_{N_\F(U)}(PC_S(U))$ is well-defined, extends $\phi=\chi\alpha^{-1}$ and acts on $U$ and $C_S(U)$. So $\phi$ is a morphism in $N_\F(UC_S(U))$ as required.    
\end{proof}

\begin{lemma}\label{L:LocalNormalTechnicalHelp}
Let $\E$ be a normal subsystem of $\F$ over $T$, let $U\in\E^c$ and $U\leq P\leq N_T(U)$. Suppose furthermore that there exists $Q\in P^\F$ fully $\F$-normalized such that 
\[N_\E(Q)\unlhd N_\F(QC_S(Q)).\]
Then every $\phi\in\Aut_{N_\E(U)}(P)$ extends to $\hat{\phi}\in\Aut_\F(PC_S(U))$ with $[C_S(U),\hat{\phi}]\leq Z(U)$.   
\end{lemma}

\begin{proof}
Let $\phi\in\Aut_{N_\E(U)}(P)$ and $\alpha\in\Hom_\F(N_S(P),S)$ with $P\alpha=Q$. Notice that $\alpha$ exists by \cite[Lemma~II.3.1]{Aschbacher/Kessar/Oliver:2011}. Moreover, Lemma~\ref{L:EcFinvariant} implies $U\alpha\in\E^c$. 

\smallskip

Assume first that $\psi:=\alpha^{-1}\phi\alpha\in\Aut_{N_\E(U\alpha)}(Q)$ extends to $\hat{\psi}\in\Aut_\F(QC_S(U\alpha))$ with 
\[[C_S(U\alpha),\hat{\psi}]\leq Z(U\alpha).\]
By Lemma~\ref{L:WeaklyNormalLocal}(b), we have $C_S(U)\leq N_S(P)$ and so $C_S(U)\alpha\leq C_S(U\alpha)$. Observe also that $U\alpha\leq Q$. In particular, $\hat{\psi}$ normalizes $Q(C_S(U)\alpha)=(PC_S(U))\alpha$. Now
\[\hat{\phi}:=\alpha(\hat{\psi}|_{(PC_S(U))\alpha})\alpha^{-1}\in\Aut_\F(PC_S(U))\]
extends $\phi=\alpha\psi\alpha^{-1}$ and $[C_S(U),\hat{\phi}]\leq Z(U)$. So the assertion holds in this case. Thus, replacing $(P,U)$ by $(Q,U\alpha)$, we can and will assume from now on that
\[P\in\F^f\mbox{ and }N_\E(P)\unlhd N_\F(PC_S(P)).\]
As $U\in\E^c$, we have $P\in\E^c$. So by Lemma~\ref{L:WeaklyNormalLocal}(c), $N_\F(PC_S(P))=N_{N_\F(P)}(PC_S(P))$ is a constrained fusion system. Hence, by Theorem~III.5.10 and Theorem~II.7.5 in \cite{Aschbacher/Kessar/Oliver:2011}, there exists a model $G$ of $N_\F(PC_S(P))$ and a normal subgroup $N$ of $G$ such that $N_T(P)\in\Syl_p(N)$ and $\F_{N_T(P)}(N)=N_\E(P)$. As $U\in\E^c$, we have in particular $U\in N_\E(P)^c$. By Lemma~\ref{L:MSCharp}(a), the group $N$ is of characteristic $p$. So by part (c) of the same lemma, we have $C_N(U)=Z(U)$. Notice that $\phi$ is a morphism in $N_\E(P)$ normalizing $U$. Hence, $\phi=c_n|_P$ for some $n\in N_N(U)$. As already noted above, we have $C_S(U)\leq N_S(P)$ and thus $C_S(U)\leq N_S(PC_S(P))\leq G$. Hence
\[[C_S(U),n]\leq [C_S(U),N_N(U)]\leq C_N(U)=Z(U)\leq P.\]
By \cite[Theorem~2.1(b)]{Henke:2015}, we have $P\unlhd G$ as $P\unlhd N_\F(PC_S(P))$. So $n$ normalizes $PC_S(U)$ and thus $\hat{\phi}:=c_n|_{PC_S(U)}\in\Aut_\F(PC_S(U))$ is well-defined. Observe that $\hat{\phi}|_P=c_n|_P=\phi$ and $[C_S(U),\hat{\phi}]\leq Z(U)$. This proves the assertion.
\end{proof}

Part (a) of the next lemma could also be obtained as a consequence of \cite[Theorem~2]{Aschbacher:2008}.

\begin{lemma}\label{L:LocalNormalConstrainedSubsystems}
Suppose $\E$ is a normal subsystem of $\F$ over $T\leq S$ and $U\in\E^c$. Then the following hold:
\begin{itemize}
 \item [(a)] If $U\in\F^f$, then the subsystem $N_\F(UC_S(U))$ is constrained and $N_\E(U)$ is a normal subsystem of $N_\F(UC_S(U))$.
 \item [(b)] Every $\phi\in\Aut_\E(U)$ extends to $\hat{\phi}\in\Aut_\F(UC_S(U))$ with $[C_S(U),\hat{\phi}]\leq Z(U)$.
\end{itemize}
\end{lemma}

\begin{proof}
For every fully $\F$-normalized $\F$-conjugate $Q$ of $U$, we we have $Q\in\E^c$ by Lemma~\ref{L:EcFinvariant}. Hence, if (a) holds, then for any such $Q$, we know that $N_\E(Q)$ is normal in $N_\F(QC_S(Q))$ and (b) follows from Lemma~\ref{L:LocalNormalTechnicalHelp} applied with $P=U$. Therefore, it is sufficient to prove (a).  

\smallskip

By Lemma~\ref{L:WeaklyNormalLocal}(c), $N_\F(UC_S(U))$ is a constrained fusion system over $N_S(U)$ and $N_\E(U)$ is a weakly normal subsystem of $N_\F(UC_S(U))$ over $N_T(U)$. Hence, we only need to show that the extension condition for normal subsystems holds. Assume that $U$ is a counterexample to (a) such that $|U|$ is maximal. 

\smallskip

Since $\E$ is normal in $\F$, every $\phi\in\Aut_\E(T)=\Aut_{N_\E(T)}(T)$ extends to $\hat{\phi}\in\Aut_\F(TC_S(T))=\Aut_{N_\F(TC_S(T))}(TC_S(T))$ with $[C_S(T),\hat{\phi}]\leq Z(T)$. Hence, $N_\E(T)\unlhd N_\F(TC_S(T))$. As $U$ is a counterexample to (a), we have 
\[U<T\mbox{ and }U<P:=N_T(U).\]
Let $Q$ be a fully $\F$-normalized $\F$-conjugate of $P$. Notice that $P\in\E^c$ and so, by Lemma~\ref{L:EcFinvariant}, $Q\in\E^c$. By the maximality of $|U|$, $Q$ is not a counterexample, i.e. $N_\E(Q)\unlhd N_\F(QC_S(Q))$. So by Lemma~\ref{L:LocalNormalTechnicalHelp}, an automorphism $\phi\in\Aut_{N_\E(U)}(P)$ extends to $\hat{\phi}\in\Aut_\F(PC_S(U))$ with $[C_S(U),\hat{\phi}]\leq Z(U)$. Now $\hat{\phi}$ is a morphism in $N_\F(UC_S(U))$. As $C_S(P)\leq C_S(U)$, the restriction $\ov{\phi}:=\hat{\phi}|_{PC_S(P)}$ is well-defined and an element of $\Aut_{N_\F(UC_S(U))}(PC_S(P))$ with 
\[[C_S(P),\ov{\phi}]\leq U\cap C_S(P)\leq Z(P).\]
So the extension property for normal subsystems holds for the pair $(N_\E(U),N_\F(UC_S(U)))$. This contradicts the assumption that $U$ is a counterexample. 
\end{proof}

The following lemma is essential in the proof of Theorem~\ref{main}.

\begin{lemma}\label{LocalNormalSubsystems}
Suppose $\m{E}$ is a normal subsystem of $\F$ over $T$. Let $P\in\F^f$ such that $P\leq T$. Then $P\in \E^f$ and the subsystems $N_\F(P)$ and $N_\E(P)$ are saturated. Moreover, $N_\E(P)$ is a normal subsystem of $N_\F(P)$.
\end{lemma}

\begin{proof}
By Lemma~\ref{L:WeaklyNormalLocal}(a), $P\in\E^f$ and $N_\E(P)$ is a weakly normal subsystem of $N_\F(P)$. In particular, the subsystems $N_\E(P)$ and $N_\F(P)$ are saturated. Notice that $U:=N_T(P)\in\E^c$. Hence by Lemma~\ref{L:LocalNormalConstrainedSubsystems}(b), every $\phi\in\Aut_\E(U)$ extends to $\hat{\phi}\in\Aut_\F(UC_S(U))$ with $[C_S(U),\hat{\phi}]\leq Z(U)$. This shows that the extension property for normal subsystems holds for the pair $(N_\E(P),N_\F(P))$. Thus the assertion follows.
\end{proof}

We take the opportunity to state a further technical lemma which will be used in the proof of Lemma~\ref{L:EcsubsetFq}, which in turn is needed in the proof of Theorem~\ref{main}.

\begin{lemma}\label{L:LocalNormalConstrainedSubsystemsV}
Suppose $\E$ is a normal subsystem of $\F$. Let $U\in\E^c\cap\F^f$ and $V\in C_\F(U)^c$ such $UV\in N_\F(U)^f$. Then $N_\F(UV)=N_{N_\F(U)}(UV)$ is constrained and $N_\E(U)$ is a normal subsystem of $N_\F(UV)$.  
\end{lemma}

\begin{proof}
Let $T\leq S$ such that $\E$ is a fusion system over $T$. As $U$ is fully $\F$-normalized and $UV$ is fully $N_\F(U)$-normalized, $N_\F(U)$ and $N_{N_\F(U)}(UV)$ are saturated. Notice that $V\cap T\leq C_T(U)\leq U$ and so $UV\cap T=U(V\cap T)=U$. Since $T$ is strongly closed, it follows that $N_\F(UV)=N_{N_\F(U)}(UV)$. Since $V\in C_\F(U)^c$, we have $C_S(UV)=C_{C_S(U)}(V)\leq V\leq UV$ and $N_\F(UV)$ is constrained. 

\smallskip

By Lemma~\ref{L:LocalNormalConstrainedSubsystems}(a), $N_\F(UC_S(U))$ is a constrained fusion system which contains $N_\E(U)$ as a normal subsystem. Hence, by \cite[Theorem~III.5.10]{Aschbacher/Kessar/Oliver:2011}, there exists a model $G$ for $N_\F(UC_S(U))$ such that $UC_S(U)\unlhd G$; moreover, by \cite[Theorem~II.7.5]{Aschbacher/Kessar/Oliver:2011}, there is a normal subgroup $N$ of $G$ such that $N_T(U)\in\Syl_p(N)$ and $N_\E(U)=\F_{N_T(U)}(N)$. The fact that $N_T(U)$ is Sylow in $N$ implies in particular that $N_S(UC_S(U))\cap N=N_T(U)$. Using $UC_S(U)\unlhd G$ and $C_T(U)\leq U$, we conclude 
\[[UV,N]\leq [UC_S(U),N]\leq UC_S(U)\cap N=U(C_S(U)\cap N)\leq UC_T(U)=U.\]
So $N$ normalizes $UV$ and $N_\E(U)=\F_{N_T(U)}(N)\subseteq N_\F(UV)\subseteq N_\F(U)$. As $N_\E(U)$ is $N_\F(U)$-invariant by Lemma~\ref{L:WeaklyNormalLocal}(a), it is also $N_\F(UV)$-invariant by \cite[Proposition~I.6.4]{Aschbacher/Kessar/Oliver:2011}. Notice that $P:=N_T(U)\in \E^c$. By Lemma~\ref{L:LocalNormalConstrainedSubsystems}(a), for a fully normalized $\F$-conjugate $Q$ of $P$, we have $N_\E(Q)\unlhd N_\F(QC_S(Q))$. Hence, by Lemma~\ref{L:LocalNormalTechnicalHelp}, an automorphism $\phi\in\Aut_{N_\E(U)}(P)$ extends to $\hat{\phi}\in\Aut_\F(PC_S(U))$ with $[C_S(U),\hat{\phi}]\leq Z(U)$. As $VC_S(P)\leq C_S(U)$, $\hat{\phi}$ normalizes $UV$ and $\ov{\phi}=\hat{\phi}|_{PC_S(P)}\in\Aut_{N_\F(UV)}(PC_S(P))$ with $[C_S(P),\hat{\phi}]\leq Z(U)\cap C_S(P)\leq Z(P)$. Hence, the extension condition for normal subsystems holds and the assertion follows.
\end{proof}

\subsection{Central products}\label{SS:CentralProdFusion}

Aschbacher \cite[pp.13-14]{Aschbacher:2011} introduced central products of fusion systems as certain quotients of direct products of fusion systems. Central products in this definition can be seen as external central products. When stating properties of components in \cite[Chapter~9]{Aschbacher:2011}, Aschbacher uses implicitly a notion of internal central products. In this subsection we give a precise definition of internal central products of fusion systems. Moreover, we study properties of such internal central products and show their close relationship to external central products. The results will be used e.g. in Subsection~\ref{SS:CentralizingCommuting}, Lemma~\ref{GeneralizedFitting} and Subsection~\ref{SS:EFNewProofs}.

\smallskip

We continue to assume that $S$ is a $p$-group and $\F$ is a fusion system over $S$. In this subsection, $\F$ is not assumed to be saturated. Indeed, for part of our definitions and results the fusion system $\F$ will not play any role. Throughout we pick $k\in\mathbb{N}$ with $k\geq 1$.

\begin{notation}\label{N:CentralProduct} 
Suppose that $\F_i$ is a fusion system over $S_i\leq S$ for $i=1,2,\dots,k$ such that $[S_i,S_j]=1$ for all $i\neq j$. Assume furthermore that $S_i\cap \prod_{j\neq i}S_j\leq Z(\F_i)$ for all $i=1,2,\dots,k$. Then we use the following notation.
\begin{itemize}
\item Given $P_i,Q_i\leq S_i$ and $\phi_i\in\Hom_{\F_i}(P_i,Q_i)$ for $i=1,2,\dots,k$, write $\phi_1*\phi_2*\cdots *\phi_k$ for the map $P_1P_2\cdots P_k\rightarrow Q_1Q_2\cdots Q_k$ sending $x_1x_2\cdots x_k$ to $(x_1\phi_1)(x_2\phi_2)\cdots (x_k\phi_k)$ whenever $x_i\in P_i$ for $i=1,\dots,k$.
\item Write $\F_1*\F_2*\cdots *\F_k$ for the fusion system over $S_1S_2\cdots S_k$ which is generated by the maps $\phi_1*\phi_2*\cdots *\phi_k$ with $P_i\leq S_i$ and $\phi_i\in\Hom_{\F_i}(P_i,S_i)$ for $i=1,2,\dots,k$.
\item For $A\leq S_1S_2\cdots S_k$ set
\[A_i:=\{x_i\in S_i\colon \exists x\in\prod_{j\neq i}S_j\mbox{ such that }x_ix\in A\}.\]
\end{itemize}   
\end{notation}

\begin{lemma}\label{L:StarProdBasic}
Let $\F_i$ be a fusion system over $S_i\leq S$ for $i=1,2,\dots,k$ such that $[S_i,S_j]=1$ for $i\neq j$. Assume furthermore that $S_i\cap \prod_{j\neq i}S_j\leq Z(\F_i)$ for $i=1,2,\dots,k$. Then the following hold:
\begin{itemize}
 \item [(a)] Let $P_i\leq S_i$ and $\phi_i\in\Hom_{\F_i}(P_i,S_i)$ for $i=1,\dots,k$. Then $\phi_1*\phi_2*\cdots *\phi_k$ is well-defined and an injective group homomorphism. In particular $\F_1*\F_2*\cdots *\F_k$ is well-defined.
 \item [(b)] $Z(\F_1)Z(\F_2)\cdots Z(\F_k)=Z(\F_1*\F_2*\cdots *\F_k)$. Moreover, if $s_i\in S_i$ for $i=1,\dots,k$ such that $s_1s_2\cdots s_k\in Z(\F_1*\F_2*\cdots *\F_k)$, then $s_i\in Z(\F_i)$ for all $i=1,2,\dots,k$. 
 \item [(c)] If $1\leq l<k$, then $(\prod_{i=1}^l S_i)\cap (\prod_{i=l+1}^kS_i)\leq Z(\F_1*\cdots *\F_l)\cap Z(\F_{l+1}*\cdots *\F_k)$ and
\[ \F_1*\F_2*\cdots *\F_k=(\F_1*\cdots \F_l)*(\F_{l+1}*\cdots *\F_k). \]
\item [(d)] Let $A\leq \prod_{i=1}^kS_i$ and let $\phi\colon A\rightarrow \prod_{i=1}^k S_i$ be a morphism in $\F_1*\F_2*\cdots *\F_k$. Then there exist $\phi_i\in \Hom_\F(A_i,S_i)$ for $i=1,\dots,k$ (with $A_i$ as defined in Notation~\ref{N:CentralProduct}) such that $\phi=(\phi_1*\phi_2*\cdots *\phi_k)|_A$.
\item [(e)] $\F_i$ is $\F_1*\F_2*\cdots *\F_k$-invariant for $i=1,\dots,k$.
\end{itemize}
\end{lemma}

\begin{proof}
\textbf{(a)} Let $P_i$ and $\phi_i$ be as in (a) and $x_i,y_i\in P_i$ for $i=1,\dots,k$. Suppose first $x_1x_2\cdots x_k=y_1y_2\cdots y_k$. Using $[S_i,S_j]=1$ for all $i\neq j$, this equality can be reformulated to $\prod_{i=1}^k x_iy_i^{-1}=1$ and to $x_iy_i^{-1}=\prod_{j\neq i}y_jx_j^{-1}$ for all $i=1,\dots,k$. Since $S_i\cap \prod_{j\neq i}S_j\leq Z(\F_i)$ for all $i=1,\dots,k$, the latter equation implies $x_iy_i^{-1}\in Z(\F_i)$ and hence $(x_i\phi_i)(y_i\phi_i)^{-1}=(x_iy_i^{-1})\phi_i=x_iy_i^{-1}$ for all $i=1,\dots,k$. Hence, we obtain $\prod_{i=1}^k(x_i\phi_i)(y_i\phi_i)^{-1}=1$ and thus $(x_1\phi_1)(x_2\phi_2)\cdots (x_k\phi_k)=(y_1\phi_1)(y_2\phi_2)\cdots (y_k\phi_k)$. So we have shown the implication
\begin{equation}\label{E:Implication}
 x_1x_2\cdots x_k=y_1y_2\cdots y_k\Longrightarrow (x_1\phi_1)(x_2\phi_2)\cdots (x_k\phi_k)=(y_1\phi_1)(y_2\phi_2)\cdots (y_k\phi_k),
\end{equation}
i.e. $\phi_1*\phi_2*\cdots *\phi_k$ is well-defined. Using \eqref{E:Implication} now with $P_i\phi_i$ and $\phi_i^{-1}$ in place of $P_i$ and $\phi_i$ we also obtain the implication
\[(x_1\phi_1)(x_2\phi_2)\cdots (x_k\phi_k)=(y_1\phi_1)(y_2\phi_2)\cdots (y_k\phi_k)\Longrightarrow x_1x_2\cdots x_k=y_1y_2\cdots y_k,\]
which says that $\phi_1*\phi_2*\cdots *\phi_k$ is injective. Hence (a) holds.

\smallskip

\textbf{(b)} Set $Z:=Z(\F_1)Z(\F_2)\cdots Z(\F_k)$. As before let $P_i\leq S_i$ and $\phi_i\in\Hom_{\F_i}(P_i,S_i)$ for $i=1,\dots,k$. To prove $Z\leq Z(\F_1*\F_2*\cdots *\F_k)$, it is enough to show that $\phi_1*\phi_2*\cdots *\phi_k$ extends in $\F_1*\F_2*\cdots *\F_k$ to a morphism $(P_1P_2\cdots P_k)Z\rightarrow S_1S_2\cdots S_k$ which fixes every element of $Z$. Indeed, each $\phi_i$ extends to a morphism $\hat{\phi}_i\in\Hom_{\F_i}(P_iZ(\F_i),S_i)$ such that $\hat{\phi}_i$ fixes every element of $Z(\F_i)$. Then $\hat{\phi}_1*\hat{\phi}_2 *\cdots *\hat{\phi}_k$ is an extension of $\phi_1*\phi_2 *\cdots *\phi_k$ with the desired properties.

\smallskip

Let now $s_i\in S_i$ for $i=1,\dots,k$ such that $z:=s_1s_2\cdots s_k\in Z(\F_1*\F_2*\cdots *\F_k)$. Fix $i\in\{1,2,\dots,k\}$, $Q_i\leq S_i$ and $\psi_i\in\Hom_{\F_i}(Q_i,S_i)$. Set $\psi:=\id_{S_1}*\cdots *\id_{S_{i-1}}*\psi_i*\id_{S_{i+1}}*\cdots *\id_{S_k}$ and $Q:=S_1\cdots S_{i-1}Q_iS_{i+1}\cdots S_k$. Then $\psi\colon Q\rightarrow S_1S_2\cdots S_k$ is a morphism in $\F_1*\F_2*\cdots *\F_k$ and extends thus to $\hat{\psi}\in\Hom_{\F_1*\cdots *\F_k}(\<Q,z\>,S_1S_2\cdots S_k)$ with $\hat{\psi}(z)=z$. Notice that $\<Q,z\>=\<Q,s_i\>$ and $s_j\in \<Q,z\>$ for $j=1,\dots,k$. It follows from the definition of $\F_1*\F_2*\cdots *\F_k$ that $\hat{\psi}_i:=\hat{\psi}|_{\<Q_i,s_i\>}$ is a morphism in $\F_i$. Since $\hat{\psi}|_{S_j}=\psi|_{S_j}=\id_{S_j}$ for $j\neq i$, we have  $s_1s_2\cdots s_k=z=\hat{\psi}(z)=s_1\cdots s_{i-1}\hat{\psi}_i(s_i)s_{i+1}\cdots s_k$ and thus $\hat{\psi}_i(s_i)=s_i$. Moreover, $\hat{\psi}_i$ extends $\psi_i=\psi|_{Q_i}=\hat{\psi}|_{Q_i}$. This proves $s_i\in Z(\F_i)$ and completes thus the proof of (b). 

\smallskip

\textbf{(c,d)} Properties (c) and (d) are trivially true for $k=1$, so we may assume $k\geq 2$ and fix $1\leq l<k$. Let first $s_i\in S_i$ for $i=1,\dots,k$ such that $s_1\cdots s_l=s_{l+1}\cdots s_k$. By rearranging this equality, one sees  that $s_i\in S_i\cap\prod_{j\neq i}S_j\leq Z(\F_i)$ for $i=1,\dots,k$. Using (b) we can thus conclude that
\[(\prod_{i=1}^l S_i)\cap (\prod_{i=l+1}^kS_i)\leq (Z(\F_1)\cdots Z(\F_l))\cap (Z(\F_{l+1})\cdots Z(\F_k))=  Z(\F_1*\cdots *\F_l)\cap Z(\F_{l+1}*\cdots *\F_k).\]
In particular, $(\F_1*\cdots *\F_l)*(\F_{l+1}*\cdots *\F_k)$ is well-defined. If $P_i\leq S_i$ and $\phi_i\in\Hom_{\F_i}(P_i,S_i)$ for $i=1,\dots,k$, then one observes easily that 
\[\phi_1*\phi_2*\cdots *\phi_k=(\phi_1*\cdots *\phi_l)*(\phi_{l+1}*\cdots *\phi_k).\] This implies $\F_1*\F_2*\cdots *\F_k\subseteq (\F_1*\cdots *\F_l)*(\F_{l+1}*\cdots *\F_k)$. So it remains to prove that $(\F_1*\cdots *\F_l)*(\F_{l+1}*\cdots *\F_k)\subseteq \F_1*\F_2*\cdots *\F_k$ and that (d) holds. Fix $A\leq \prod_{i=1}^k S_i$ and a morphism $\phi\colon A\rightarrow \prod_{i=1}^k S_i$ in $(\F_1*\cdots *\F_l)*(\F_{l+1}*\cdots *\F_k)$. It is sufficient to argue that $\phi$ is of the form $\phi=(\phi_1*\phi_2*\cdots *\phi_k)|_A$ for some $\phi_i\in\Hom_{\F_i}(A_i,S_i)$. We prove this by induction on $k$. For $k=2$, part (d) and thus our claim is true by \cite[Lemma~2.11(f)]{Henke:Regular}. Set $T_1:=\prod_{i=1}^lS_i$, $T_2:=\prod_{i=l+1}^kS_i$ and
\[B_j:=\{x\in T_j\colon \exists t\in T_{3-j}\mbox{ such that }xt\in A\}\mbox{ for }j=1,2.\]
Since (d) is true for $k=2$, there are $\psi_1\in\Hom_{\F_1*\cdots *\F_l}(B_1,T_1)$ and $\psi_2\in \Hom_{\F_{l+1}*\cdots *\F_k}(B_2,T_2)$ such that $\phi=(\psi_1*\psi_2)|_A$. Now notice that 
\[A_i=\{x_i\in S_i\colon \exists y\in\prod_{j\neq i,\;1\leq j\leq l}S_j\mbox{ such that }x_iy\in B_1\} \mbox{ for }i=1,\dots,l\]
and
\[A_i=\{x_i\in S_i\colon \exists y\in\prod_{j\neq i,\;l+1\leq j\leq k}S_j\mbox{ such that }x_iy\in B_2\}\mbox{ for }i=l+1,\dots,k. \]
Therefore, induction on $k$ gives the existence of $\phi_i\in\Hom_{\F_i}(A_i,S_i)$ for $i=1,\dots,k$ such that $\psi_1=(\phi_1*\cdots *\phi_l)|_{Q_1}$ and $\psi_2=(\phi_{l+1}*\cdots *\phi_k)|_{Q_2}$. This yields
$\phi=(\psi_1*\psi_2)|_A=(\phi_1*\phi_2*\cdots *\phi_k)|_A$. Thus (c) and (d) hold.

\smallskip

\textbf{(e)} Observe that the restriction of a morphism in $\F_1*\F_2*\cdots *\F_k$ to a subgroup of $S_i$ is always a morphism $\F_i$ for $i=1,\dots,k$. It follows from this property that (e) is true and the proof is complete.  
\end{proof}

The following lemma complements the statement in Lemma~\ref{L:StarProdBasic}(c). 

\begin{lemma}\label{L:StarAssociativeHelp}
Let $\F_i$ be a fusion system over $S_i\leq S$ for $i=1,2,\dots,k$ such that $[S_i,S_j]=1$ for $i\neq j$. Fix $l\in\mathbb{N}$ with $1\leq l<k$. Then $S_i\cap (\prod_{j\neq i,\;1\leq j\leq k}S_j)\leq Z(\F_i)$ for all $i=1,2,\dots,k$ if and only if the following properties hold:
\begin{itemize}
 \item [(i)] $S_i\cap (\prod_{j\neq i,\;1\leq j\leq l}S_j)\leq Z(\F_i)$ for all $i=1,2,\dots,l$;
 \item [(ii)] $S_i\cap (\prod_{j\neq i,\;l+1\leq j\leq k}S_j)\leq Z(\F_i)$ for all $i=l+1,\dots,k$;
 \item [(iii)] $(\prod_{i=1}^l S_i)\cap (\prod_{i=l+1}^kS_i)\leq Z(\F_1*\cdots *\F_l)\cap Z(\F_{l+1}*\cdots *\F_k)$.
\end{itemize}
\end{lemma}

\begin{proof}
If $S_i\cap \prod_{j\neq i}S_j\leq Z(\F_i)$ for all $i=1,2,\dots,k$, then clearly (i) and (ii) hold, and (iii) follows from Lemma~\ref{L:StarProdBasic}(c).

\smallskip

Suppose now that properties (i)-(iii) hold. Let $i\in\{1,2,\dots,k\}$ and $s_i\in S_i\cap (\prod_{j\neq i,\;1\leq j\leq k}S_j)$. Then for $j\in\{1,2,\dots,k\}$ with $j\neq i$ there exist $s_j\in S_j$ such that $s_i=\prod_{j\neq i,\;1\leq j\leq k}s_j^{-1}$. It follows $s_1s_2\cdots s_k=1$ and thus $\prod_{j=1}^ls_j=\prod_{j=l+1}^ks_j^{-1}\in Z(\F_1*\cdots *\F_l)\cap Z(\F_{l+1}*\cdots *\F_k)$ by (iii). Therefore, (i),(ii) and Lemma~\ref{L:StarProdBasic}(b) yield $s_j\in Z(\F_j)$ for $j=1,2,\dots,k$. In particular, $s_i\in Z(\F_i)$, which proves $S_i\cap (\prod_{j\neq i,\;1\leq j\leq k}S_j)\leq Z(\F_i)$.
\end{proof}

\begin{definition}
Suppose that $\F_i$ is a fusion system over a $p$-group $S_i$ for $i=1,\dots,k$. 
\begin{itemize}
\item If $P_i\leq S_i$ and $\phi_i\in\Hom_{\F_i}(P_i,S_i)$ for $i=1,\dots,k$ let 
\[\phi_1\times\phi_2\times\cdots\times \phi_k\colon P_1\times P_2\times\cdots\times P_k\rightarrow S_1\times S_2\times \cdots \times S_k\] be the map with $(x_1,x_2,\cdots,x_k)\mapsto (x_1\phi_1,x_2\phi_2,\cdots,x_k\phi_k)$. It is easy to check that $\phi_1\times\phi_2\cdots\times\phi_k$ is an injective group homomorphism. Write $\F_1\times\F_2\times\cdots\times \F_k$ for the fusion system over $S_1\times S_2\times \cdots \times S_k$ which is generated by all the maps $\phi_1\times\phi_2\times\cdots\times\phi_k$ with $\phi_i\in\Hom_{\F_i}(P_i,S_i)$ for $i=1,\dots,k$. 
\item For $i=1,\dots,k$ let $\iota_i\colon S_i\rightarrow S_1\times S_2\times \cdots\times S_k$ be the natural inclusion map sending $s_i\in S_i$ to the tuple with $s_i$ in the $i$th entry and all other entries equal to one. It is easy to check that $\iota_i$ induces a morphism from $\F_i$ to $\F_1\times\F_2\times\cdots\times \F_k$. Set $\hat{S}_i:=S_i\iota_i$ and $\hat{\F}_i=\F_i^{\iota_i}$ (cf. Definition~\ref{D:MorphismFusionSystemImage}). 
\end{itemize}
\end{definition}

Observe that $\F_i\cong \hat{\F}_i$.

\begin{lemma}\label{L:InternalExternalCentralProduct}
For $i=1,\dots,k$ let $\F_i$ be a fusion system over $S_i\leq S$. Suppose $[S_i,S_j]=1$ for $i\neq j$. Consider the map
\[\alpha \colon S_1\times S_2\times\cdots\times S_k\rightarrow \prod_{i=1}^kS_i,(s_1,s_2,\cdots,s_k)\mapsto s_1s_2\cdots s_k.\]
Then the following hold:
\begin{itemize}
\item [(a)] $\alpha$ is a surjective group homomorphism with $\ker(\alpha)\cap \hat{S}_i=1$ for $i=1,2,\dots,k$. Moreover, $S_i\cap \prod_{j\neq i}S_j\leq Z(\F_i)$ for all $i=1,\dots,k$ if and only if 
\[\ker(\alpha)\leq Z(\F_1)\times Z(\F_2)\times\cdots\times Z(\F_k).\]
\item [(b)] If $S_i\cap \prod_{j\neq i}S_j\leq Z(\F_i)$ for all $i=1,\dots,k$, then $\alpha$ induces an epimorphism of fusion systems from $\F_1\times \F_2\times\cdots\times\F_k$ to $\F_1*\F_2*\cdots *\F_k$ with $\hat{\F}_i^\alpha=\F_i$. In particular, 
\[\F_1*\F_2*\cdots *\F_k\cong (\F_1\times\F_2\times\cdots \times\F_k)/\ker(\alpha)\]
is isomorphic to an (external) central product of $\F_1,\dots,\F_k$ in the sense of \cite[p.14]{Aschbacher:2011}.
\item [(c)] If $\F_1,\F_2,\dots,\F_k$ are saturated and $S_i\cap \prod_{j\neq i}S_j\leq Z(\F_i)$ for all $i=1,\dots,k$, then $\F_1*\F_2*\cdots *\F_k$ is saturated and $\F_i\unlhd\F_1*\F_2*\cdots *\F_k$ for $i=1,\dots,k$. 
\end{itemize}
\end{lemma}

\begin{proof}
Write $\F:=\F_1\times\F_2\times\cdots\times\F_k$ and $\tF:=\F_1*\F_2*\cdots *\F_k$ for short. Set $Z_i:=S_i\cap \prod_{j\neq i}S_j$ for $i=1,\dots,k$. For every subgroup $P$ of $S_1\times S_2\times\cdots\times S_k$ and $i=1,\dots,k$ write $P_i$ for the image of $P$ under the natural projection map $S_1\times S_2\times\cdots \times S_k\rightarrow S_i$. 

\smallskip

It follows from $[S_i,S_j]=1$ for $i\neq j$ that $\alpha$ is a group homomorphism. Clearly $\alpha$ is surjective and $\ker(\alpha)\cap\hat{S}_i=1$. Notice that $\ker(\alpha)\leq Z_1\times Z_2\times\cdots\times Z_k$. Moreover, for $z_i\in Z_i$, we have $z_i\in S_i$ and there exist $z_j\in S_j$ for $j\neq i$ such that $z_i=\prod_{j\neq i}z_j^{-1}$. Then $(z_1,z_2,\dots,z_k)\alpha=z_1z_2\cdots z_k=1$, thus $(z_1,z_2,\dots,z_k)\in\ker(\alpha)$ and $z_i\in\ker(\alpha)_i$. Moreover, if $Q\leq S_1S_2\cdots S_k$, then $z_1z_2\cdots z_k=1\in Q$ and hence $z_i\in Q_i$, where $Q_i$ is defined according to Notation~\ref{N:CentralProduct}. This proves 
\begin{equation}\label{E:Zi}
 Z_i=\ker(\alpha)_i\mbox{ and }Z_i\leq Q_i\mbox{ for all }Q\leq S_1S_2\cdots S_k\mbox{ and all }i=1,\dots,k.
\end{equation} 
\indent\textbf{(a)} Part (a) follows directly from $Z_i=\ker(\alpha)_i$ for all $i=1,\dots,k$, which is true by \eqref{E:Zi}. 

\smallskip

\textbf{(b)} Assume now $Z_i\leq Z(\F_i)$ for all $i=1,\dots,k$. Let $P\leq S_1\times S_2\times\cdots\times S_k$.  
 As an intermediate step, we argue next that
\begin{equation}\label{E:PiZi}
 P_iZ_i=(P\alpha)_i
\mbox{ for }i=1,\dots,k.
\end{equation}
Clearly $P_i\leq (P\alpha)_i$. It follows from \eqref{E:Zi} applied with $P\alpha$ in place of $Q$ that $Z_i\leq (P\alpha)_i$. Hence $P_iZ_i\leq (P\alpha)_i$. Given $x_i\in (P\alpha)_i$, there exist $x_j\in S_j$ for all $j\neq i$ such that $x_1x_2\cdots x_k\in P\alpha$. Further, there is then $(y_1,\dots,y_k)\in P$ with $y_1y_2\cdots y_k=(y_1,\dots,y_k)\alpha=x_1x_2\cdots x_k$. Notice now that $y_i\in P_i$ and $x_iy_i^{-1}=\prod_{j\neq i}y_jx_j^{-1}\in Z_i$. Hence $x_i\in Z_iP_i$ and \eqref{E:PiZi} follows.

\smallskip

We are now in a position to show that $\alpha$ induces an epimorphism from $\F$ to $\tF$. Let $P,Q\leq S_1\times S_2\times\cdots\times S_k$ and $\phi\in\Hom_\F(P,Q)$. It is easy to check (and proved for $k=2$ in \cite[Theorem~I.6.6]{Aschbacher/Kessar/Oliver:2011}) that $\phi$ is of the form $\phi=(\phi_1\times\phi_2\times\cdots\times\phi_k)|_P$ for some $\phi_i\in\Hom_{\F_i}(P_i,Q_i)$ ($i=1,\dots,k$). As $Z_i\leq Z(\F_i)$ for $i=1,\dots,k$, each $\phi_i$ extends to a unique $\hat{\phi}_i\in\Hom_{\F_i}(P_iZ_i,Q_iZ_i)$ with $\hat{\phi}_i|_{Z_i}=\id_{Z_i}$. Given such $\phi_i$ and $\hat{\phi}_i$, we have 
\[((\phi_1\times \phi_2\times\cdots\times\phi_k)|_P)\alpha=\alpha((\hat{\phi}_1*\hat{\phi}_2*\cdots *\hat{\phi}_k)|_{P\alpha});\]
this uses \eqref{E:PiZi} to see that $(\hat{\phi}_1*\hat{\phi}_2*\cdots *\hat{\phi}_k)|_{P\alpha}$ is well-defined. Thus, $\alpha$ induces a morphism from $\F$ to $\hat{\F}$ where the corresponding map $\alpha_{P,Q}\colon \Hom_\F(P,Q)\rightarrow \Hom_{\tF}(P\alpha,Q\alpha)$ is given by $(\phi_1\times\phi_2\times\cdots \times\phi_k)|_P\mapsto (\hat{\phi}_1*\hat{\phi}_2*\cdots *\hat{\phi}_k)|_{P\alpha}$. Suppose now  $\ker(\alpha)\leq P$. Then it follows from \eqref{E:Zi} and \eqref{E:PiZi} that $Z_i=\ker(\alpha)_i\leq P_i$ and $P_i=(P\alpha)_i$ for $i=1,\dots,k$. Hence, the map $\alpha_{P,Q}$ maps $(\phi_1\times \phi_2\times \cdots \phi_k)|_P$ simply to $(\phi_1*\phi_2*\cdots *\phi_k)|_{P\alpha}$ whenever $\phi_i\in \Hom_{\F_i}(P_i,Q_i)$ for $i=1,\dots,k$. Hence, it is a consequence of Lemma~\ref{L:StarProdBasic}(d) that $\alpha_{P,Q}$ is surjective. This proves that $\alpha$ induces an epimorphism from $\F$ to $\tF$. In particular, $\hat{\F}_i^\alpha$ is defined. Notice that $\alpha|_{\hat{S}_i}=\iota_i^{-1}$ and so $\hat{\F}_i^\alpha=(\F_i^{\iota_i})^\alpha=\F_i$. This shows (b).

\smallskip

\textbf{(c)} We continue to assume that $Z_i\leq Z(\F_i)$ and we assume now in addition that $\F_1,\dots,\F_k$ are saturated. We argue first that $\F_1*\F_2*\cdots \F_k$ is saturated. Induction on $k$ allows us to assume that $\F_1*\F_2*\cdots *\F_{k-1}$ is  saturated. By Lemma~\ref{L:StarProdBasic}(c), $(\prod_{j=1}^{k-1}S_j)\cap S_k\leq Z(\F_1*\F_2*\cdots *\F_{k-1})\cap Z(\F_k)$ and $\F_1*\F_2*\cdots *\F_k=(\F_1*\F_2*\cdots *\F_{k-1})*\F_k$. Hence, part (b) gives that $\F_1*\F_2*\cdots *\F_k$ is an epimorphic image of $(\F_1*\F_2*\cdots *\F_{k-1})\times \F_k$, which is a saturated fusion system by \cite[Theorem~I.6.6]{Aschbacher/Kessar/Oliver:2011}. Hence, $\F_1*\F_2*\cdots *\F_k$ is saturated by \cite[Lemma~II.5.4]{Aschbacher/Kessar/Oliver:2011}. 

\smallskip

Fix now $i\in\{1,2,\dots,k\}$. As $\F_i$ and $\F_1*\F_2*\cdots *\F_k$ are saturated, it follows from Lemma~\ref{L:StarProdBasic}(e) that $\F_i$ is weakly normal in $\F_1*\F_2*\cdots *\F_k$. Moreover, the extension condition for normal subsystems (cf. \cite[Definition~I.6.1]{Aschbacher/Kessar/Oliver:2011}) holds because, for $\phi_i\in\Aut_{\F_i}(S_i)$, we obtain a suitable extension of $\phi_i$ by setting $\phi_j:=\id_{S_j}$ for $j\neq i$ and considering $\phi_1*\phi_2*\cdots *\phi_k$.
\end{proof}

\begin{definition}\label{D:InternalCentralProduct}
For each $i=1,\dots,k$ let $\F_i$ be a subsystem of $\F$ over $S_i\leq S$. Suppose $[S_i,S_j]=1$ for $i\neq j$.
\begin{itemize}
\item The subsystems $\F_1,\dots,\F_k$ are said to \emph{centralize each other in $\F$} if $\F_i\subseteq C_\F(\prod_{j\neq i}S_j)$ and $S_i\cap \prod_{j\neq i}S_j\leq Z(\F_i)$ for all $i=1,\dots,k$. 
\item The fusion system $\F$ is an \emph{internal central product} of $\F_1,\dots,\F_k$ if $S_i\cap \prod_{j\neq i}S_j\leq Z(\F_i)$ for all $i=1,\dots,k$ and $\F=\F_1*\F_2*\cdots*\F_k$.
\end{itemize}
\end{definition}

\begin{lemma}\label{L:CentralizeEachOtherSaturated}
For $i=1,\dots,k$ let $\F_i$ be a saturated subsystem of $\F$ over $S_i$ such that $[S_i,S_j]=1$ for $i\neq j$. Then $\F_1,\dots,\F_k$ centralize each other in $\F$ if and only if $\F_i\subseteq C_\F(\prod_{j\neq i}S_j)$ for $i=1,\dots,k$.
\end{lemma}

\begin{proof}
Fix $i\in\{1,\dots,k\}$ and suppose $\F_i\subseteq C_\F(\prod_{j\neq i}S_j)$. It is sufficient to show that $Z:=S_i\cap \prod_{j\neq i}S_j\leq Z(\F_i)$. Our assumption yields $Z\leq Z(S_i)$. Pick $R\in\F_i^{cr}$. Then $Z\leq Z(S_i)\leq C_{S_i}(R)\leq R$. Since $\F_i\subseteq C_\F(\prod_{j\neq i}S_j)$, it follows moreover that every element of $\Aut_{\F_i}(R)$ acts trivially on $Z$. As $\F_i$ is saturated and $R\in\F_i^{cr}$ was arbitrary, Alperin's Fusion Theorem \cite[Theorem~I.3.6]{Aschbacher/Kessar/Oliver:2011} implies the assertion.
\end{proof}

\begin{lemma}\label{L:CentralizeinFAssocDescribe}
For $i=1,\dots,k$ let $\F_i$ be a subsystem of $\F$ over $S_i$. Set $T:=\prod_{i=1}^kS_i$ and assume that $\F_1,\dots,\F_k$ centralize each other in $\F$. Then 
\[\F_1*\F_2*\cdots *\F_k=\<\phi\in\Hom_\F(P_1P_2\cdots P_k,T)\colon P_i\leq S_i,\;\phi|_{P_i}\in\Hom_{\F_i}(P_i,S_i)\>_T\subseteq\F.\]
\end{lemma}

\begin{proof}
If $\phi\in\Hom_\F(P_1P_2\cdots P_k,T)$ with $\phi_i:=\phi|_{P_i}\in\Hom_{\F_i}(P_i,S_i)$, then one observes easily that $\phi=\phi_1*\phi_2*\cdots *\phi_k$ is a morphism in $\F_1*\F_2*\cdots *\F_k$. This proves one inclusion and it remains to show the converse one.

\smallskip

Let $\phi:=\phi_1*\phi_2*\cdots *\phi_k$ where $P_i\leq S_i$ and $\phi_i\in\Hom_{\F_i}(P_i,S_i)$ for $i=1,\dots,k$. Then $\phi|_{P_i}=\phi_i$. Thus, we only need to argue that $\phi$ is a morphism in $\F$. Set $T_i:=\prod_{j\neq i}S_j$ and  $Q_i:=(\prod_{j=1}^{i-1}P_j\phi_j)(\prod_{j=i}^kP_j)$ for $i=1,\dots,k$. Since $\F_1,\F_2,\dots,\F_k$ centralize each other in $\F$, every $\phi_i$ extends to $\hat{\phi}_i\in\Hom_\F(P_iT_i,T)$ with $\hat{\phi}_i|_{T_i}=\id_{T_i}$. Then 
\[\hat{\phi}_1|_{Q_1}\hat{\phi}_2|_{Q_2}\cdots \hat{\phi}_k|_{Q_k}\in\Hom_\F(P_1P_2\cdots P_k,T)\]
is a morphism in $\F$, which is equal to $\phi$, as its restriction to $P_i$ equals $\phi_i$ for $i=1,\dots,k$. This proves the assertion.
\end{proof}

If $\F_1$ and $\F_2$ are subsystems of $\F$ which centralize each other in $\F$, then Lemma~\ref{L:CentralizeinFAssocDescribe} says that the fusion system $\F_1*\F_2$ as defined above coincides with the equally denoted subsystem introduced in \cite[Definition~3.1]{Henke:2018}. This allows us to apply results from that paper below.

\begin{lemma}\label{L:CharacterizeF1starF2}
Let $\F_1,\F_2,\dots,\F_k$ be normal subsystems of $\F$ 
\begin{itemize}
 \item [(a)] The following conditions are equivalent:
\begin{itemize}
\item[(i)] $\F_1,\F_2,\dots,\F_k$ centralize each other;
\item[(ii)] $\F_i\subseteq C_\F(\prod_{j\neq i}S_j)$ for each $i=1,2,\dots,k$;
\item[(iii)] $S_i\cap \prod_{j\neq i}S_j\leq Z(\F_i)$ for each $i=1,2,\dots,k$.
\end{itemize}
\item [(b)] If $\F_1,\dots,\F_k$ centralize each other, then $\F_1*\F_2*\cdots *\F_k\unlhd \F$.
\item [(c)] Let $\E$ be a saturated subsystem of $\F$ with $\F_i\unlhd \E$ for $i=1,2,\dots,k$. Suppose $\F_1,\dots,\F_k$ centralize each other in $\F$. Then $\F_1,\dots,\F_k$ centralize each other in $\E$ and  
\[\F_1*\F_2*\cdots *\F_k\unlhd \E.\]
\end{itemize}
\end{lemma}

\begin{proof}
Part (c) follows from (a) and (b) applied with $\E$ in place of $\F$. Thus, it remains to prove (a) and (b). Lemma~\ref{L:CentralizeEachOtherSaturated} gives that (i) and (ii) are equivalent and that (ii) implies (iii). Suppose now (iii) holds. Using induction on $k$ we will show that (ii) holds and that $\F_1*\F_2*\cdots *\F_k\unlhd\F$. This will complete the proof.

\smallskip

Clearly the claim is true for $k=1$. Suppose now that $k>1$ and that the assertion is true for $k-1$. Then in particular, $\F_1*\cdots *\F_{i-1}*\F_{i+1}*\cdots \F_k\unlhd\F$ for $i=1,\dots,k$. 
Hence, by \cite[Lemma~7.2(b)]{Henke:2018}, it follows from (iii) that $\F_i\subseteq C_\F(\prod_{j\neq i}S_j)$ for $i=1,\dots,k$. It is a special case of Lemma~\ref{L:StarProdBasic}(c) that $(\prod_{j=1}^{k-1}S_j)\cap S_k\leq Z(\F_1*\F_2*\cdots *\F_{k-1})\cap Z(\F_k)$ and $(\F_1*\F_2*\cdots *\F_{k-1})*\F_k=\F_1*\F_2*\cdots *\F_k$. As $\F_1*\F_2*\cdots *\F_{k-1}\unlhd\F$, \cite[Theorem~3]{Henke:2018} yields $\F_1*\F_2*\cdots *\F_k=(\F_1*\F_2*\cdots *\F_{k-1})*\F_k\unlhd\F$. This proves the assertion.
\end{proof}

\begin{lemma}\label{L:CharacterizeF1starF2Real}
Let $\F_1,\F_2,\dots,\F_k$ be normal subsystems of $\F$ which centralize each other. Then $\F_1*\F_2*\cdots *\F_k$ is the smallest normal subsystem of $\F$ in which $\F_1,\F_2,\dots,\F_k$ are normal.
\end{lemma}

\begin{proof}
By Lemma~\ref{L:CharacterizeF1starF2}(b),(c), $\F_1*\F_2*\cdots *\F_k$ is a normal subsystem of $\F$ that is contained in every normal subsystem in which $\F_1,\dots,\F_k$ are normal. By Lemma~\ref{L:InternalExternalCentralProduct}(c),  $\F_i\unlhd\F_1*\F_2*\cdots *\F_k$ for $i=1,2,\dots,k$. 
\end{proof}

\subsection{Normal subsystems of $p$-power index and products of normal subsystems with $p$-subgroups}\label{SS:Fusionppowerindex} For the convenience of the reader we will summarize some background definitions and results in this subsection. We also take the opportunity to prove some more specialized lemmas that are needed later on. 

\smallskip

\textbf{For the remainder of this section $\F$ is assumed to be saturated.}

\begin{definition}[{\cite[Definitions~I.7.1 and I.7.3]{Aschbacher/Kessar/Oliver:2011}}]
The \emph{hyperfocal subgroup} of $\F$ is the subgroup
\[\hyp(\F)=\<[P,O^p(\Aut_\F(P))]\colon P\leq S\>.\]
A subsystem $\m{E}$ of $\F$ over $T\leq S$ has $p$-power index if $\hyp(\F)\leq T$ and $O^p(\Aut_\F(P))\leq \Aut_{\m{E}}(P)$ for every subgroup $P\leq T$. 
\end{definition}

It turns out that there is a unique smallest saturated subsystem of $\F$ of $p$-power index. It is explicitly given by
\[O^p(\F)=\<O^p(\Aut_\F(P))\colon P\leq \hyp(\F)\>_{\hyp(\F)}.\]
Moreover, $O^p(\F)$ is a normal subsystem of $\F$. More generally, for every $T\leq S$ with $\hyp(\F)\leq T$, the subsystem
\[\F_T:=\<O^p(\Aut_\F(P))\colon P\leq T\>_T\]
is the unique saturated subsystem of $\F$ over $T$ of $p$-power index. Furthermore, $\F_T$ is normal in $\F$ if and only if $T\unlhd S$. These properties are stated in \cite[Theorem~I.7.4]{Aschbacher/Kessar/Oliver:2011}; for the proof see also \cite[Theorem~4.3]{BCGLO2}. 

\smallskip

If $\E$ is a normal subsystem of $\F$ and $R$ is a subgroup of $S$, then a product subsystem $\E R$ is defined. Such a product was first introduced by Aschbacher \cite[Chapter~8]{Aschbacher:2011}, but we will use the construction given in \cite{Henke:2013}. For the convenience of the reader we summarize this construction in the definition below.

\begin{definition}\label{D:Product}
Let $\E$ be a normal subsystem of $\F$ over $T\leq S$. For every $P\leq S$, set
\begin{eqnarray*}
\Ac(P)&:=&\Ac_{\F,\E}(P)\\
&:=&\<\phi\in\Aut_\F(P)\colon \phi\mbox{ $p^\prime$-element},\;[P,\phi]\leq P\cap T\mbox{ and }\phi|_{P\cap T}\in\Aut_\E(P\cap T)\>.
\end{eqnarray*}
For every subgroup $R$ of $S$, define 
\begin{eqnarray*}
\E R:=(\E R)_\F:=\<\Ac(P)\colon P\leq TR\mbox{ and }P\cap T\in\E^c\>_{TR}
\end{eqnarray*} 
and call $\E R=(\E R)_\F$ the \emph{product} of $\E$ with $R$ formed inside of $\F$.
\end{definition}

By \cite[Theorem~1]{Henke:2013},  $\E R$ is saturated; moreover, $\E R$ is the unique saturated subsystem $\m{D}$ of $\F$ over $TR$ with $O^p(\m{D})=O^p(\E)$. An important consequence of the latter fact is stated in the following remark.

\begin{remark}\label{R:ERsubG}
Suppose $\E$ is a normal subsystem both of $\F$ and of a saturated subsystem $\G$ of $\F$ over $Q$. If $R\leq Q$, then  $(\E R)_\G$ is a saturated subsystem of $\F$ over $TR$ with $O^p((\E R)_\G)=O^p(\E)$. Thus $(\E R)_\G$ is equal to $(\E R)_\F$. In particular, $\E R=(\E R)_\F$ is contained in $\G$.
\end{remark}

\begin{remark}\label{R:ERProof}
The proof of \cite[Theorem~1]{Henke:2013} refers to \cite[Theorem~2]{Aschbacher:2008} at one point (namely in \cite[Notation~5.2]{Henke:2013}). The property that is needed is however exactly the one stated in Lemma~\ref{L:LocalNormalConstrainedSubsystems}(a). All other background results used in \cite{Henke:2013} are stated in \cite[Sections~I.1-I.7]{Aschbacher/Kessar/Oliver:2011}. 
\end{remark}

\begin{lemma}\label{L:pPowerIndexProducts}
Let $\E$ be a saturated subsystem of $\F$. Then the following conditions are equivalent:
\begin{itemize}
 \item[(i)] $\E$ has $p$-power index;
 \item[(ii)] $O^p(\F)\subseteq \E$;
 \item[(iii)] $O^p(\F)=O^p(\E)$.
\end{itemize}
In particular $O^p(\F)=O^p(O^p(\F))$. If $\E\unlhd\F$, then conditions (i)-(iii) are also equivalent to 
\begin{itemize}
 \item[(iv)] $\F=\E S$.
\end{itemize}
\end{lemma}

\begin{proof}
It follows from the definition of $p$-power index and from the concrete description of $O^p(\F)$ given above that (i) implies (ii). From the description of $O^p(\F)$ and $O^p(\E)$ one sees also that (ii) implies (iii); to show that (ii) implies $\hyp(\F)\subseteq\hyp(\E)$ one uses \[[P,O^p(\Aut_\F(P))]=[P,O^p(\Aut_\F(P)),O^p(\Aut_\F(P))]\mbox{ for every }P\leq S,\] which is true e.g. by \cite[Theorem~A2]{Aschbacher/Kessar/Oliver:2011}. Notice that the implication ``(ii)$\Longrightarrow$(iii)'' gives in particular that $O^p(\F)=O^p(O^p(\F))$.

\smallskip

Assume now that (iii) holds, i.e. $O^p(\F)=O^p(\E)$. Let $T\leq S$ such that $\E$ is a subsystem over $T$. Then $\F_T$ as defined above is a subsystem of $\F$ over $T$ with $O^p(\F)\subseteq \F_T$. Using the property that (ii) implies (iii) with $\F_T$ in place of $\E$, we conclude $O^p(\F_T)=O^p(\F)=O^p(\E)$. As $O^p(\F) T$ is by \cite[Theorem~1]{Henke:2013} the unique saturated subsystem $\m{D}$ of $\F$ over $T$ with $O^p(\m{D})=O^p(O^p(\F))$ and since $O^p(O^p(\F))=O^p(\F)$, it follows $\E=O^p(\F) T=\F_T$. In particular, (i) holds. This shows that properties (i)-(iii) are equivalent.

\smallskip

Suppose now $\E\unlhd\F$. Using again \cite[Theorem~1]{Henke:2013} we argue now that (iii) and (iv) are equivalent. If (iii) holds, then $\E S$ is the unique saturated subsystem $\m{D}$ of $\F$ such that $O^p(\m{D})$ equals $O^p(\E)=O^p(\F)$ and this implies $\E S=\F$. So (iii) implies (iv). If (iv) holds, then $O^p(\F)=O^p(\E S)=O^p(\E)$, i.e. (iv) implies (iii). This completes the proof. 
\end{proof}

\begin{corollary}\label{C:FT}
Let $\hyp(\F)\leq T\leq S$. Then $\F_T=O^p(\F)T$.
\end{corollary}

\begin{proof}
Recall that $\F_T$ is a saturated subsystem of $\F$ over $T$ of $p$-power index.  Lemma~\ref{L:pPowerIndexProducts} applied with $\F_T$ in place of $\E$ yields thus $O^p(\F_T)=O^p(\F)$. By the same lemma, $O^p(O^p(\F))=O^p(\F)$. As $O^p(\F)T$ is by \cite[Theorem~1]{Henke:2013} the unique saturated fusion system $\m{D}$ of $\F$ over $T$ with $O^p(\m{D})=O^p(O^p(\F))=O^p(\F)$, it follows $\F_T=O^p(\F)T$.
\end{proof}

\begin{lemma}\label{L:EsubseteqCERX}
Let $\E$ be a normal subsystem of $\F$ over $T$, let $R\leq S$ and $X\leq TR$ with $\E\subseteq C_\F(X)$. Then $\E\subseteq C_{\E R}(X)$. 
\end{lemma}

\begin{proof}
Let $Q\in\E^{cr}\cap \E^f$. As $Q\in\E^f$, $\Aut_T(Q)$ is a Sylow $p$-subgroup of $\Aut_\E(Q)$ and thus $\Aut_\E(Q)=\Aut_T(Q)O^p(\Aut_\E(Q))$. Notice that our assumption implies in particular that $[X,T]=1$. Hence, the elements of $\Aut_T(Q)$ are morphisms in $C_{\E R}(X)$. So by Alperin's Fusion Theorem \cite[Theorem~I.3.6]{Aschbacher/Kessar/Oliver:2011}, it is sufficient to show that the elements of $O^p(\Aut_\E(Q))$ are morphisms in $C_{\E R}(X)$. Set $P:=QX$ and fix a $p^\prime$-element $\alpha\in\Aut_\E(Q)$. As $\E\subseteq C_\F(X)$, $\alpha$ extends to $\hat{\alpha}\in\Aut_\F(P)$ with $\hat{\alpha}|_X=\id_X$. Such $\hat{\alpha}$ has the same order as $\alpha$ and is thus a $p^\prime$-element. Note also that $P\cap T=Q(X\cap T)=Q\in\E^c$ as $(X\cap T)\leq Z(T)\leq C_T(Q)\leq Q$. Moreover, as $\hat{\alpha}|_X=\id_X$, we have $[P,\hat{\alpha}]\leq Q=P\cap T$. Observe also that $\hat{\alpha}|_Q=\alpha\in\Aut_\E(Q)$. So $\hat{\alpha}\in\Ac(P)$ is a morphism in $\E R$ fixing every element of $X$. This shows that $\alpha$ is a morphism in $C_{\E R}(X)$. Since $\alpha$ was arbitrary, this proves the assertion.
\end{proof}

\begin{lemma}\label{L:ERequalsEstarR}
 Let $\E$ be a normal subsystem of $\F$ and let $R\leq S$ such that $\E\subseteq C_\F(R)$. Then $\E$ and $\F_R(R)$ centralize each other in $\E R$ and thus in $\F$. Moreover, $\E R=\E *\F_R(R)$.
\end{lemma}

\begin{proof}
By Lemma~\ref{L:EsubseteqCERX}, $\E \subseteq C_{\E R}(R)$. Using Lemma~\ref{L:CentralizeEachOtherSaturated}, one can thus conclude that $\E$ and $\F_R(R)$ centralize each other in $\E R$. Therefore, $\G:=\E *\F_R(R)\subseteq \E R$ by Lemma~\ref{L:CentralizeinFAssocDescribe}. By Lemma~\ref{L:InternalExternalCentralProduct}(c), $\G$ is saturated and $\E$ is normal in $\G$. Hence, it follows from Remark~\ref{R:ERsubG} that $\E R=(\E R)_\G\subseteq\G$.   
\end{proof}

If $\E$ is a normal subsystem of $\F$ over $T$, then it is shown in \cite[Chapter~6]{Aschbacher:2011} and in \cite[Theorem~1]{Henke:2018} that there is a subgroup $C_S(\E)$ of $S$ which is maximal with respect to inclusion among all subgroups of $S$ containing $\E$ in its centralizer. The next lemma will be needed in the proof of Proposition~\ref{P:ERNR0}, which is used in turn to show Theorem~\ref{T:mainProductWithPSubgroup}. 

\begin{lemma}\label{CSENormalER}
Let $\E$ be a normal subsystem of $\F$ over $T$, and let $R$ be a subgroup of $S$. Then $\E\unlhd\E R$. If $C_S(\E)\leq TR$, then $C_S(\E)$ is a normal subgroup of $\E R$.
\end{lemma}

\begin{proof}
Replacing $R$ by $TR$, we may assume without loss of generality that $T\leq R$. We will show first the following property.
\begin{equation}\label{E:ExtensionProp}
\mbox{Let $\alpha\in\Aut_\E(T)$. Then $\alpha$ extends to $\hat{\alpha}\in\Aut_{\E R}(TC_R(T))$ with $[TC_R(T),\hat{\alpha}]\leq T$.}
\end{equation}
Note that $\Inn(T)\in\Syl_p(\Aut_\E(T))$ by the saturation axioms. Therefore, it is indeed enough to show \eqref{E:ExtensionProp} in the case that $\alpha$ is either an element of $\Inn(T)$ or a $p^\prime$-element. If $\alpha=c_t|_T\in\Inn(T)$ with $t\in T$, then $\hat{\alpha}$ can be chosen to be $c_t|_{TC_R(T)}$.  So assume that $\alpha$ is a $p^\prime$-automorphism. As $\E$ is normal in $\F$, $\alpha$ extends to $\ov{\alpha}\in\Aut_\F(TC_S(T))$ with $[TC_S(T),\ov{\alpha}]\leq T$. We can always replace $\ov{\alpha}$ by a suitable power of $\ov{\alpha}$ to assume that $\ov{\alpha}$ is a $p^\prime$-automorphism as well. Now $\hat{\alpha}:=\ov{\alpha}|_{TC_R(T)}\in\Ac(TC_R(T))$ is a morphism in $\E R$. Moreover, we have $[TC_R(T),\hat{\alpha}]\leq [TC_S(T),\ov{\alpha}]\leq T$. This shows \eqref{E:ExtensionProp}.

\smallskip

If $\E$ is contained in a saturated subsystem $\m{D}$ of $\F$, then it follows from the equivalent definition of $\F$-invariant subsystems given in \cite[Proposition~I.6.4(d)]{Aschbacher/Kessar/Oliver:2011} that $\E$ is $\m{D}$-invariant. Thus, as $\E$ is saturated, it follows 
\begin{equation}\label{E:weaklyNormal}
\mbox{$\E$ is weakly normal in every saturated subsystem of $\F$ containing $\E$.} 
\end{equation}
Note that $\E\subseteq \E R$, so in particular $\E$ is weakly normal in $\E R$. It follows therefore from \eqref{E:ExtensionProp} that $\E\unlhd\E R$, which proves the first part of the assertion. 

\smallskip

To prove the second part of the assertion assume now that $C_S(\E)\leq R=TR$. As $C_S(\E)$ is by \cite[Theorem~1(a)]{Henke:2018} strongly closed in $\F$, it is then also strongly closed in $\E R=(\E R)_\F$. In particular, $C_S(\E)$ is fully $\E R$-normalized and so $\G:=N_{\E R}(C_S(\E))$ is a saturated subsystem of $\F$ over $R$. By Lemma~\ref{L:EsubseteqCERX} applied with $C_S(\E)$ in place of $X$, we have $\E\subseteq C_{\E R}(C_S(\E))\subseteq\G$. So $\E$ is by \eqref{E:weaklyNormal} weakly normal in $\G$. Every $\alpha\in\Aut_\E(T)$ extends by \eqref{E:ExtensionProp} to $\hat{\alpha}\in\Aut_{\E R}(TC_R(T))$ with $[TC_R(T),\hat{\alpha}]\leq T$. Our assumption yields $C_S(\E)\leq C_R(T)$. Thus, as $C_S(\E)$ is strongly closed in $\E R$, it follows $\hat{\alpha}\in\Aut_\G(TC_R(T))$. This shows $\E\unlhd\G$. 
It follows therefore from Remark~\ref{R:ERsubG} that $\E R=(\E R)_{\G}\subseteq \G:=N_{\E R}(C_S(\E))$. This means that $C_S(\E)$ is normal in $\E R$.
\end{proof}

\subsection{Centralizers of normal subsystems}\label{SS:CFE}

We continue to assume in this subsection that $\F$ is a saturated fusion system over $S$. Let $\m{E}$ be a normal subsystem of $\F$ over $T\leq S$. Recall that $C_S(\m{E})$ is a subgroup of $S$ which is maximal with respect to inclusion among all subgroups of $S$ containing $\m{E}$ in its centralizer. Thus, $C_S(\E)$ plays the role of a centralizer of $\E$ in $S$. It turns out that there is also a normal subsystem $C_\F(\E)$ of $\F$ over $C_S(\E)$ which functions as a centralizer of $\E$ in $\F$. This was first shown by Aschbacher \cite[Chapter~6]{Aschbacher:2011}. A new proof and a new construction of $C_\F(\E)$ is given in \cite{Henke:2018}. Namely, it is shown in \cite[Proposition~1]{Henke:2018} that $\hyp(C_\F(T))\leq C_S(\E)$. Using the notation from Subsection~\ref{SS:Fusionppowerindex}, the centralizer of $\E$ is then defined as
\[C_\F(\E):=C_\F(T)_{C_S(\E)},\]
i.e. as the unique saturated subsystem of $C_\F(T)$ over $C_S(\E)$ of $p$-power index. We will build on this construction of $C_\F(\E)$ and on the results from \cite{Henke:2018} in this text.

\begin{remark}\label{R:CFEBackground}
The proofs of Lemma~2.5 and Lemma~2.6 in \cite{Henke:2018} cite results from \cite{Aschbacher:2008}. However, \cite[Lemma~2.5]{Henke:2018} is the same as Lemma~\ref{LocalNormalSubsystems}. The reference to \cite[6.10.2]{Aschbacher:2008} in the proof of \cite[Lemma~2.6]{Henke:2018} could be replaced by a reference to Lemma~\ref{L:LocalNormalConstrainedSubsystems}(a). All other proofs in \cite{Henke:2018} rely solely on background results which are anyway used in the present paper.  
\end{remark}

The next proposition captures some of the main properties of $C_\F(\E)$.

\begin{prop}\label{P:DEcentralize}
 Let $\m{D}$ and $\E$ be normal subsystems of $\F$. Then the following are equivalent:
\begin{itemize}
\item [(i)] $\m{D}$ and $\E$ centralize each other;
\item [(ii)] $\m{D}\subseteq C_\F(\E)$;
\item [(iii)] $\m{D}\unlhd C_\F(\E)$.
\end{itemize}
\end{prop}

\begin{proof}
By Lemma~\ref{L:CentralizeEachOtherSaturated} and \cite[Theorem~2]{Henke:2018}, (i) and (ii) are equivalent. Clearly (iii) implies (ii). Thus, it remains only to prove that (i) and (ii) imply (iii). So assume that (i) and (ii) hold. Let $\m{D}$ be a subsystem over $R\leq S$ and let $\E$ be a subsystem over $T\leq S$. As $\m{D}$ is $\F$-invariant, it follows from the equivalent characterization of $\F$-invariant subsystems given in \cite[Proposition~I.6.4]{Aschbacher/Kessar/Oliver:2011} that $\m{D}$ is $C_\F(\E)$-invariant. 

\smallskip

As $\m{D}$ is saturated, it is thus sufficient to prove the extension condition for normal subsystems. More precisely, we need to show that every $\alpha\in\Aut_{\m{D}}(R)$ extends to $\hat{\alpha}\in\Aut_{C_\F(\E)}(RC_{C_S(\E)}(R))$ with $[RC_{C_S(\E)}(R),\hat{\alpha}]\leq R$. If $\alpha=c_r|_R\in\Inn(R)$ with $r\in R$, then $\hat{\alpha}=c_r|_{RC_{C_S(\E)}(R)}$ is such an extension of $\alpha$. As $\Inn(R)$ is a normal Sylow $p$-subgroup of $\Aut_{\m{D}}(R)$ by the saturation axioms, it is thus sufficient to prove the extension property for every  $p^\prime$-automorphism $\alpha\in\Aut_{\m{D}}(R)$. 

\smallskip

Fix a $p^\prime$-automorphism $\alpha\in\Aut_{\m{D}}(R)$. The assumption that $\m{D}$ is a normal subsystem of $\F$ implies the existence of an automorphism $\tilde{\alpha}\in\Aut_\F(RC_S(R))$ with $\tilde{\alpha}|_R=\alpha$ and $[RC_S(R),\tilde{\alpha}]\leq R$. Replacing $\tilde{\alpha}$ by a suitable power of $\tilde{\alpha}$ if necessary, we may assume that $\tilde{\alpha}$ is also a $p^\prime$-automorphism. As $T$ is strongly closed, we have $[T,\tilde{\alpha}]\leq R\cap T\leq Z(\m{D})$, where the last inclusion uses (i). Using the  property of coprime action stated e.g. in \cite[Lemma~A.2]{Aschbacher/Kessar/Oliver:2011}, we can conclude that $[T,\tilde{\alpha}]=[T,\tilde{\alpha},\tilde{\alpha}]\leq [Z(\m{D}),\tilde{\alpha}]=[Z(\m{D}),\alpha]=1$. Hence, $\tilde{\alpha}|_T=\id_T$, so in particular, $\hat{\alpha}=\tilde{\alpha}|_{RC_{C_S(\E)}(R)}$ is an element of $C_\F(T)$. As $\hat{\alpha}$ is a $p^\prime$-automorphism, it follows then from the definition of $C_\F(\E)$ above that $\hat{\alpha}$ is an element of $C_\F(\E)$. Clearly, $[RC_{C_S(\E)}(R),\hat{\alpha}]\leq [RC_S(R),\tilde{\alpha}]\leq R$, so the assertion follows. 
\end{proof}

The following lemma will be important in the proof of Theorem~\ref{main}. It seems interesting to remark that a somewhat similar result is shown in \cite[Lemma~8.2]{Henke:Regular} for partial normal subgroups of proper localities. It would imply Lemma~\ref{L:EcsubsetFq} if Theorem~\ref{main} was given. 

\begin{lemma}\label{L:EcsubsetFq}
Let $\E$ be a normal subsystem of $\F$ over $T\leq S$ such that $\hyp(C_\F(T))\leq T$ or $C_S(\E)\leq T$. Then $\E^c\subseteq\F^q$.
\end{lemma}

\begin{proof}
By \cite[Proposition~1]{Henke:2018}, $\hyp(C_\F(T))\leq C_S(\E)$. Hence, our assumption yields $\hyp(C_\F(T))\leq Z(T)$. So for $P\leq C_S(T)$ and a $p^\prime$-element $\phi\in \Aut_{C_\F(T)}(P)$, we have $[P,\phi]\leq Z(T)$ and thus $[P,\phi]=[P,\phi,\phi]=1$ by \cite[Lemma~A.2]{Aschbacher/Kessar/Oliver:2011}. Recall that $T$ is strongly closed and so, in particular, $T$ is fully centralized and $C_\F(T)$ is saturated. Thus, Alperin's Fusion Theorem \cite[Theorem~I.3.6]{Aschbacher/Kessar/Oliver:2011} yields that $C_\F(T)$ is the fusion system of a $p$-group and $T\in\F^q$.

\smallskip

Assume now that there exists $U\in \E^c$ with $U\not\in\F^q$. Choose such $U$ of maximal order. By Lemma~\ref{L:EcFinvariant}, $\E^c$ is  closed under $\F$-conjugacy. The set $\F^q$ is by definition closed under $\F$-conjugacy. So we may assume $U\in\F^f$. As $T\in\F^q$, we have $U<T$ and thus $U<P:=N_T(U)$. Observe that $P\in\E^c$. It follows therefore from the maximality of $|U|$ that $P\in\F^q$.  

\smallskip

As $U\in\F^f$ and $U\not\in\F^q$, the centralizer $C_\F(U)$ is not the fusion system of a $p$-group. Hence, by Alperin's fusion theorem, there exists $V\in C_\F(U)^c$ such that $\Aut_{C_\F(U)}(V)$ is not a $p$-group. One observes easily that $C_\F(U)$ is weakly normal in $N_\F(U)$. Thus, using  Lemma~\ref{L:EcFinvariant}, we see that we can replace $V$ by any $N_\F(U)$-conjugate of $V$. Therefore, we may assume that $UV\in N_\F(U)^f$. Then by Lemma~\ref{L:LocalNormalConstrainedSubsystemsV}, $N_\F(UV)=N_{N_\F(U)}(UV)$ is a constrained fusion system and $N_\E(U)$ is normal in $N_\F(UV)$. Hence, using Theorem~III.5.10 and Theorem~II.7.5 in \cite{Aschbacher/Kessar/Oliver:2011} in combination with Lemma~\ref{L:MSCharp}(a), we see that there exists a model $G$ of $N_\F(UV)$ and a normal subgroup $N$ of $G$ which is a model for $N_\E(U)$. As $U$ is a normal centric subgroup of $N_\E(U)$, using \cite[Theorem~III.5.10]{Aschbacher/Kessar/Oliver:2011} again, we see that $U\unlhd N$ and $C_N(U)\leq U$. Since $P=N_T(U)\leq N$, it follows that 
\[[P,C_G(U)]\leq [N,C_G(U)]\leq C_N(U)=Z(U)\leq P.\]
Hence, $C_G(U)$ normalizes $P$ and 
\[[P,O^p(C_G(U))]=[P,O^p(C_G(U)),O^p(C_G(U))]\leq [Z(U),C_G(U)]=1.\]
This shows $O^p(C_G(U))\leq C_G(P)$. 

\smallskip

Notice that $P$ is normal in the Sylow $p$-subgroup $S_0:=N_S(UV)=N_{N_S(U)}(UV)$ of $G$. Hence, $C_{S_0}(P)$ is a Sylow $p$-subgroup of $C_G(P)$ and $\F_{C_{S_0}(P)}(C_G(P))$ is a saturated subsystem of $C_\F(P)$. As $P\in\F^q$, it follows from  Lemma~\ref{L:QuasicentricApply} that $\F_{C_{S_0}(P)}(C_G(P))=\F_{C_{S_0}(P)}(C_{S_0}(P))$. Thus, a theorem of Frobenius \cite[Theorem~1.4]{Linckelmann:2007a} yields that $C_G(P)=C_{S_0}(P)O_{p^\prime}(C_G(P))$. On the other hand, as $G$ is of characteristic $p$, Lemma~\ref{L:MSCharp}(b) gives that $C_G(P)$ is of characteristic $p$, which implies $O_{p^\prime}(C_G(P))=1$. Hence, $C_G(P)=C_{S_0}(P)$ is a $p$-group. As $O^p(C_G(U))\leq C_G(P)$, it follows that $C_G(U)$ is a $p$-group. However, every element of $\Aut_{C_\F(U)}(V)$ is a morphism in $N_\F(UV)$ centralizing $U$ and thus realized by conjugation with an element of $C_G(U)$. This contradicts the assumption that $\Aut_{C_\F(U)}(V)$ is not a $p$-group.  
\end{proof}

In the proof of Theorem~\ref{main}, we will use Lemma~\ref{L:EcsubsetFq} in the form of the following lemma.

\begin{lemma}\label{L:tE}
Let $\E$ be a normal subsystem of $\F$ over $T\leq S$. Then $\E$ and $C_\F(\E)$ centralize each other in $\F$. Moreover, setting $\tE:=\E *C_\F(\E)$, the following hold:
\begin{itemize}
 \item [(a)] $\tE$ is a normal subsystem of $\F$. 
 \item [(b)] $\tE^{cr}=\{P_1P_2\colon P_1\in\E^{cr},\;P_2\in C_\F(\E)^{cr}\}$.
 \item [(c)] $C_S(\tE)\subseteq C_S(\E)$ and $\tE^{cr}\subseteq\tE^c\subseteq\F^q$.
\end{itemize}
\end{lemma}

\begin{proof}
\textbf{(a,b)} It follows from Proposition~\ref{P:DEcentralize} or from a direct argument using Lemma~\ref{L:CentralizeEachOtherSaturated} that $\E$ and $C_\F(\E)$ centralize each other in $\F$. In particular, (a) holds by Lemma~\ref{L:CharacterizeF1starF2}. It follows now from Lemma~\ref{L:InternalExternalCentralProduct} that $\tE$ is an internal central product of $\E$ and $C_\F(\E)$ not only in our definition above, but also in the sense of \cite[Definition~2.9]{Henke:2016}. Hence, (b) follows from \cite[Lemma~2.10(a)]{Henke:2016}. 

\smallskip

\textbf{(c)} Note that $\E\subseteq \tE\subseteq C_\F(C_S(\tE))$. Thus, by the characterization of $C_S(\E)$ as the largest subgroup of $S$  containing $\E$ in its centralizer (cf. \cite[Theorem~1]{Henke:2018}), we have $C_S(\tE)\subseteq C_S(\E)\subseteq TC_S(\E)$. As $\tE$ is a subsystem over $TC_S(\E)$, Lemma~\ref{L:EcsubsetFq} gives therefore that $\tE^{cr}\subseteq\tE^c\subseteq\F^q$. 
\end{proof}

\section{Partial groups and localities}\label{LocalitiesSection}

The purpose of this section is to recall some basic definitions and background results on partial groups and localities, and to prove some more specialized results needed in the proofs of our main theorems.

\subsection{Partial groups} 

For any set $X$ write $\W(X)$ for the free monoid on $X$. Thus, an element 
of $\W(X)$ is a finite sequence of (or \emph{word in}) the elements of $X$, and the multiplication in 
$\W(X)$ consists of concatenation of words. The concatenation of words $v_1,\dots,v_k\in\W(X)$ is denoted by $v_1\circ v_2\circ\dots\circ v_k$. The \emph{length} of the word 
$(x_1,\cdots,x_n)$ is $n$. The \emph{empty word} is the word $\emptyset$ of length 0. We make no 
distinction between $X$ and the set of words of length $1$. 

\begin{definition}
Let $\L$ be a non-empty set, and let $\D$ be a subset of $\W(\L)$ such that: 
\begin{itemize}
\item [(1)] $\L\subseteq\D$ (i.e. $\D$ contains all words of length 1), and 
\[u\circ v\in\bold D\implies u,v\in\bold D. \]
\end{itemize}
Notice that since $\L$ is non-empty, (1) implies that also the empty word is in $\D$. A mapping $\Pi\colon\D\to\L$ is a \emph{(partial) product} if:  
\begin{itemize}
 \item [(2)] $\Pi$ restricts to the identity map on $\L$, and 
 \item [(3)] $u\circ v\circ w\in\D\Longrightarrow u\circ\Pi(v)\circ w\in\D$, and 
$\Pi(u\circ v\circ w)=\Pi(u\circ\Pi(v)\circ w)$. 
\end{itemize}
An \emph{inversion} on $\L$ consists of an involutory bijection $x\mapsto x^{-1}$ on $\L$, 
together with the mapping $w\mapsto w^{-1}$ on $\W(\L)$ given by 
\[(x_1,\cdots,x_n)\mapsto(x_n^{-1},\cdots x_1^{-1}).\]
We say that $\L$, with the product $\Pi\colon\D\rightarrow\L$ and inversion $(-)^{-1}$, is a 
\emph{partial group} if: 
\begin{itemize}
\item [(4)] $w\in\bold D\Longrightarrow w^{-1}\circ w\in\bold D$ and $\Pi(w^{-1}\circ w)=\One$, 
\end{itemize}
where $\One$ denotes the image of the empty word under $\Pi$. Notice that (1) and (4) yield 
$w^{-1}\in\D$ if $w\in\D$. As $(w^{-1})^{-1}=w$, condition (4) is symmetric. 
\end{definition}

\textbf{For the remainder of this section let $\L$ be a partial group with product $\Pi\colon \D\rightarrow \L$ defined on the domain $\D\subseteq\W(\L)$.} 

\smallskip

As above we will always write $\One$ for the image of the empty word under $\Pi$. Moreover, given a word $v=(f_1,\dots,f_n)\in\D$, we write sometimes $f_1f_2\dots f_n$ for the product $\Pi(v)$.

\smallskip

A \emph{partial subgroup} of $\L$ is a subset $\H$ of $\L$ such that $f^{-1}\in\H$ for all $f\in\H$ and $\Pi(w)\in\H$ for all $w\in\W(\H)\cap\D$. Note that, for any partial subgroup $\H$ of $\L$, we have $\emptyset\in\W(\H)\cap\D$ and thus $\One=\Pi(\emptyset)\in\H$. It is easy to see that a partial subgroup of $\L$ is always a partial group itself whose product is the restriction of the product $\Pi$ to $\W(\H)\cap\D$. Observe furthermore that $\L$ forms a group in the usual sense if $\W(\L)=\D$; see \cite[Lemma~1.3]{Chermak:2015}. So it makes sense to call a partial subgroup $\H$ of $\L$ a \emph{subgroup of $\L$} if $\W(\H)\subseteq\D$. In particular, we can talk about \emph{$p$-subgroups of $\L$} meaning subgroups of $\L$ whose order is a power of $p$.

\begin{notation}
For any $g\in\L$, the set of elements $x\in\L$ with $(g^{-1},x,g)\in\D$ is denoted by $\D(g)$. Thus, $\D(g)$ is the set of elements $x\in\L$ for which the conjugate $x^g:=\Pi(g^{-1},x,g)$ is defined. Given $g\in\L$ and $X\subseteq \D(g)$, we set $X^g:=\{x^g\colon x\in X\}$.  
\end{notation}

\begin{lemma}\cite[Lemma~1.6(c)]{Chermak:2015}\label{L:cgInverse}
For every $g\in\L$, the map $c_g\colon \D(g)\rightarrow \D(g^{-1}),\;x\mapsto x^g$ is well-defined and a bijection with inverse map $c_{g^{-1}}$.
\end{lemma}

\begin{definition}
Define the \emph{normalizer} and the \emph{centralizer} of a subset $X$ of $\L$ by 
\[N_\L(X):=\{g\in\L\colon X\subseteq\D(g)\mbox{ and }X^g=X\}\]
and
\[C_\L(X):=\{g\in\L\colon X\subseteq\D(g)\mbox{ and }x^g=x\mbox{ for all }x\in X\}.\]
If $\H\subseteq\L$ and $X\subseteq\L$, then set $N_\H(X)=N_\L(X)\cap\H$ and $C_\H(X)=C_\L(X)\cap\H$. Set moreover
\[Z(\L):=C_\L(\L).\]
\end{definition}

\begin{definition}
\begin{itemize}
\item A partial subgroup $\N$ of $\L$ is called a \emph{partial normal subgroup} of $\L$ if, for all $g\in\L$ and all $n\in\N\cap\D(g)$, we have $n^g\in\N$. We write $\N\unlhd\L$ to indicate that $\N$ is a partial normal subgroup of $\L$. 
\item A partial subgroup $\H$ of $\L$ is called a \emph{partial subnormal subgroup} if there exists a sequence  $\H=\H_0\unlhd\H_1\unlhd\dots\unlhd\H_n=\L$, which is then called a \emph{subnormal series} of $\H$ in $\L$. Write $\H\subn\L$ to indicate that $\H$ is subnormal in $\L$. 
\end{itemize}
\end{definition}

If $\L$ is a group (i.e. if $\W(\L)=\D$), then notice that every partial subgroup of $\L$ is a subgroup, every partial normal subgroup of $\L$ is a normal subgroup, and every partial subnormal subgroup of $\L$ is a subnormal subgroup in the usual sense.

\begin{lemma}\label{L:SubnormalBasic}
\begin{itemize}
 \item [(a)] If $\K\subn\H$ and $\H\subn\L$, then $\K\subn\L$. 
 \item [(b)] If $\H$ and $\K$ are partial subnormal subgroups of $\L$, then $\H\cap\K$ is a partial subnormal subgroup of $\L$. Hence, the intersection of a finite number of partial subnormal subgroups is a partial subnormal subgroup.
 \item [(c)] If $\H$ is a partial subgroup of $\L$ and $\K\subn\L$, then $\K\cap \H\subn\H$. In particular, if in addition $\K\subseteq\H$, then $\K\subn\H$.  
\end{itemize}
\end{lemma}

\begin{proof}
Clearly (a) holds. Let $\H$ be a partial subgroup of $\L$ and $\K\subn\L$. Then there exists a subnormal series $\K=\K_0\unlhd\K_1\unlhd\dots\unlhd\K_n=\L$. Observe that \[\K\cap\H=\K_0\cap\H\unlhd\K_1\cap\H\unlhd\dots\unlhd\K_n\cap\H=\L\cap\H=\H\]
is a subnormal series of $\K\cap\H$ in $\H$. So  $\K\cap\H\subn\H$ and (c) holds. If $\H$ is also subnormal in $\L$, then it follows thus from (a) that $\K\cap\H\subn\L$ and (b) holds.
\end{proof}

\subsection{Localities}

Localities are partial groups with some additional structure. To give the precise definition we will use the following notation. 

\begin{notation}
Let $\Delta$ be a set of subgroups of $\L$. Write $\D_\Delta$ for the set of words $(f_1,\dots,f_n)\in\W(\L)$ for which there exist $P_0,\dots,P_n\in\Delta$ with 
\begin{itemize}
\item [(*)] $P_{i-1}\subseteq \D(f_i)$ and $P_{i-1}^{f_i}=P_i$ for all $i=1,\dots,n$.
\end{itemize}
If $v=(f_1,\dots,f_n)\in\W(\L)$, then we say that $v\in\D_\Delta$ via $P_0,\dots,P_n$ (or $v\in\D_\Delta$ via $P_0$), if $P_0,\dots,P_n\in\Delta$ and (*) holds.
\end{notation}

The following definition of a locality was given in \cite[Definition~5.1]{Henke:2015}. As argued in the same paper in Remark~5.2, it is equivalent to the definition of a locality given in \cite[Definition~2.9]{Chermak:2013} and \cite[Definition~2.7]{Chermak:2015}.

\begin{definition}\label{LocalityDefinition}
The triple $(\L,\Delta,S)$ is called a \emph{locality} if the partial group $\L$ is finite as a set, $S$ is a $p$-subgroup of $\L$ and $\Delta$ is a non-empty set of subgroups of $S$ such that the following conditions hold:
\begin{itemize}
\item[(L1)] $S$ is maximal with respect to inclusion among the $p$-subgroups of $\L$.
\item[(L2)] $\D=\D_\Delta$.
\item[(L3)] The set $\Delta$ is closed under passing to $\L$-conjugates and overgroups in $S$.
\end{itemize}
If $(\L,\Delta,S)$ is a locality and $v=(f_1,\dots,f_n)\in\W(\L)$, then we say that $v\in\D$ via $P_0,\dots,P_n$ (or $v\in\D$ via $P_0$), if $v\in\D_\Delta$ via $P_0,\dots,P_n$.
\end{definition}

The condition that $\Delta$ is \emph{closed under passing to $\L$-conjugates in $S$} means here more precisely that, given an element $P\in\Delta$, we have $P^g\in\Delta$ for every $g\in\L$ with $P\subseteq \D(g)$ and $P^g\subseteq S$. As we will see next, many natural examples of localities can be constructed from groups.

\begin{example}\label{LGammaM}
 Let $M$ be a finite group and $S\in\Syl_p(M)$. Let $\Gamma$ be a non-empty collection of subgroups of $S$ such that $\Gamma$ is $\F_S(M)$-closed in the sense of Definition~\ref{D:Fclosed}. Set 
\[\L_\Gamma(M):=\{g\in G\colon S\cap S^g\in \Gamma\}=\{g\in G\colon \mbox{There exists } P\in\Gamma\mbox{ with }P^g\leq S\}.\]
Let $\D$ be the set of tuples $(g_1,\dots,g_n)\in\W(M)$ such that there exist $P_0,P_1,\dots,P_n\in\Gamma$ with $P_{i-1}^{g_i}=P_i$ for all $i=1,\dots,n$. Then $\L_\Gamma(M)$ forms a partial group whose product is the restriction of the multivariable product on $M$ to $\D$, and whose inversion map is the restriction of the inversion map on the group $M$ to $\L_\Gamma(M)$. Moreover, $(\L_\Gamma(M),\Gamma,S)$ forms a locality. 
\end{example}

\begin{proof}
 See \cite[Example/Lemma~2.10]{Chermak:2013}.
\end{proof}

\begin{notation}
Let $(\L,\Delta,S)$ be a locality. For any $g\in\L$ set 
\[S_g:=\{s\in\D(g)\colon s^g\in S\}.\]
More generally, if $w=(g_1,\dots,g_n)\in\W(\L)$, write $S_w$ for the set of all $s\in S$ such that there exists a word $(s_0,s_1,\dots,s_n)\in\W(S)$ with $s_0=s$, $s_{i-1}\in\D(g_i)$ and $s_{i-1}^{g_i}=s_i$ for all $i=1,\dots,n$. 
\end{notation}

We will now give a summary of some basic properties of localities. 

\begin{lemma}[Important properties of localities]\label{LocalitiesProp}
Suppose $(\L,\Delta,S)$ is a locality. The following hold:
\begin{itemize}
\item [(a)] If $P\in\Delta$, then $N_\L(P)$ is a subgroup of $\L$. Moreover, $N_\N(P)$ is a normal subgroup of $N_\L(P)$ for any partial normal subgroup $\N$ of $\L$.   
\item [(b)] If $P\in\Delta$ and $g\in\L$ with $P\subseteq S_g$, then $Q:=P^g\in\Delta$. Moreover, $N_\L(P)\subseteq \D(g)$ and 
\[c_g\colon N_\L(P)\rightarrow N_\L(Q),\;x\mapsto x^g\]
is an isomorphism of groups. For any partial normal subgroup $\N$, the map $c_g$ induces an isomorphism of groups from $N_\N(P)$ to $N_\N(Q)$. For $x\in N_\L(P)$, 
\[(c_g|_P)^{-1}(c_x|_P)(c_g|_P)=c_{x^g}|_P.\]
In particular, setting $\Aut_\N(X):=\{c_n|_X\colon n\in N_\N(X)\}$ for every $X\in\Delta$,   
\[(c_g|_P)^{-1}\Aut_\N(P)(c_g|_P)=\Aut_\N(Q).\] 
\item [(c)] Let $w=(g_1,\dots,g_n)\in\D$ via $(X_0,\dots,X_n)$. Then 
$$c_{g_1}\circ \dots \circ c_{g_n}=c_{\Pi(w)}$$
 is a group isomorphism $N_\L(X_0)\rightarrow N_\L(X_n)$.
\item [(d)] For every $g\in\L$, $S_g\in\Delta$. In particular, $S_g$ is a subgroup of $S$. Moreover,  $S_g^g=S_{g^{-1}}$.
\item [(e)] Let $w\in\W(\L)$. Then $S_w$ is a subgroup of $S$. Moreover, $S_w\in\Delta$ if and only if $w\in\D$. If so then $S_w\leq S_{\Pi(w)}$.
\end{itemize}
\end{lemma}

\begin{proof}
\textbf{(a,c)} The first part of (a) and property (c) correspond to the statements (a) and (c) in \cite[Lemma~2.3]{Chermak:2015}. It is easy to see that the intersection of a partial normal subgroup with a subgroup $\H$ is always a normal subgroup of $\H$, so the second part of (a) holds as well.

\smallskip

\textbf{(b)} By \cite[Lemma~2.3(b)]{Chermak:2015}, $N_\L(P)\subseteq \D(g)$ and 
$$c_g\colon N_\L(P)\rightarrow N_\L(Q)$$
is an isomorphism of groups provided $Q:=P^g\in\Delta$. With the definition of a locality given above, it is however immediate from (L3) that $Q\in\Delta$, so the first part of (b) holds. Let now $\N$ be a partial normal subgroup of $\L$. Then $N_\N(P)^g\subseteq N_\L(Q)\cap \N=N_\N(Q)$. By (d), $Q\leq S_{g^{-1}}$ and $Q^{g^{-1}}=P$, so a symmetric argument gives $N_\N(Q)^{g^{-1}}\subseteq N_\N(P)$. Using Lemma~\ref{L:cgInverse}, conjugating on both sides by $g$ gives $N_\N(Q)\subseteq N_\N(P)^g$. Hence, $N_\N(Q)=N_\N(P)^g$ as required. For every $x\in N_\L(P)$, we have $(g^{-1},x,g)\in\D$ via $Q,P,P,Q$. Therefore, it follows from (c) and Lemma~\ref{L:cgInverse} that $(c_g|_P)^{-1}(c_x|_P)(c_g|_P)=(c_{g^{-1}}|_Q)(c_x|_P)(c_g|_P)=c_{x^g}|_Q$. In particular, since we have seen that the elements of $N_\N(Q)$ are precisely the elements of the form $x^g$ with $x\in N_\N(P)$, this shows $(c_g|_P)^{-1}\Aut_\N(P)(c_g|_P)=\Aut_\N(Q)$. Hence, (b) holds.

\smallskip

\textbf{(d)} This is true by \cite[Proposition~2.5(a),(b)]{Chermak:2015}.

\smallskip

\textbf{(e)} If $w\in\W(\L)$, then it is shown in \cite[Corollary~2.6]{Chermak:2015} that $S_w\leq S$ and $S_w\in\Delta$ if and only if $w\in\D$. It follows from (c) that $S_w\subseteq S_{\Pi(w)}$ if $w\in\D$. Hence (e) holds.
\end{proof}

We will use from now on without further reference that, for every locality $(\L,\Delta,S)$ and every $g\in\L$, the subset $S_g$ is an element of $\Delta$ and thus a subgroup of $S$. Moreover, $c_g\colon S_g\rightarrow S,x\mapsto x^g$ is an injective group homomorphism. We will also use that, for every $P\in\Delta$, the normalizer $N_\L(P)$ is a subgroup of $\L$, and that $P^g\in\Delta$ for every $g\in\L$ with $P\subseteq S_g$.

\begin{definition}\label{D:RestrictionLocality}
Let $(\L^+,\Delta^+,S)$ be a locality with a partial product $\Pi^+\colon\D^+\longrightarrow\L^+$. Suppose that  $\emptyset\neq \Delta\subseteq \Delta^+$ such that $\Delta$ is closed with respect to taking $\L^+$-conjugates and overgroups in $S$. Set 
\[\L^+|_{\Delta}:=\{f\in\L^+\colon S_f\in\Delta\}.\]
Note that $\D:=\D_{\Delta}\subseteq\D^+\cap\W(\L^+|_{\Delta})$ and, by Lemma~\ref{LocalitiesProp}(c), $\Pi^+(w)\in\L|_{\Delta}$ for all $w\in\D$. We call $\L:=\L^+|_{\Delta}$ together with the partial product $\Pi^+|_{\D}\colon\D\longrightarrow \L$ and the restriction of the inversion map on $\L^+$ to $\L$ the \emph{restriction} of $\L^+$ to $\Delta$.
\end{definition}

With the hypothesis as in the preceding definition, it turns out that the restriction of $\L^+$ to $\Delta$ forms a partial group. Indeed, the triple $(\L^+|_\Delta,\Delta,S)$ is a locality (cf. \cite[Lemma~2.21(a)]{Chermak:2013} and \cite[Lemma~2.23(a),(c)]{Henke:2020}). The reader should note that, in the definition of the restriction, $S_f$ and $\D_\Delta$ are a priori formed inside of $\L^+$. However, as argued in \cite[Lemma~2.23(b)]{Henke:2020}, it does not matter whether one forms $S_f$ and $\D_\Delta$ inside of $\L^+$ or inside of the partial group $\L^+|_\Delta$.

\smallskip

We repeat the following definition from \cite[Lemma~2.13]{Chermak:2015}.

\begin{definition}
If $(\L,\Delta,S)$ is a locality, then set
\[O_p(\L):=\bigcap\{S_w\colon w\in\W(\L)\}\] 
\end{definition}

\begin{lemma}\label{L:OpL}
If $(\L,\Delta,S)$ is a locality, then $O_p(\L)$ is the unique largest subgroup of $S$ which is a partial normal subgroup of $\L$. Moreover, a subgroup $P\leq S$ is a partial normal subgroup of $\L$ if and only if  $N_\L(P)=\L$.
\end{lemma}

\begin{proof}
This is proved in \cite[Lemma~3.13]{Henke:Regular} based on \cite[Lemma~2.13]{Chermak:2015}.
\end{proof}

\subsection{Fusion systems of localities}

Given a locality $(\L,\Delta,S)$, a fusion system $\F_S(\L)$ is naturally defined. In fact, we have the following more general definition. 

\begin{definition}\label{D:F(H)}
Let $(\L,\Delta,S)$ be a locality and $\H$ a partial subgroup of $\L$. By $\F_{S\cap \H}(\H)$ we denote the fusion system over $S\cap\H$ generated by all the maps of the form \[c_h\colon S_h\cap\H\rightarrow S\cap\H,x\mapsto x^h\] with $h\in\H$.  In particular, $\F_S(\L)$ is the fusion system over $S$ generated by the maps $c_g\colon S_g\rightarrow S$ with $g\in\L$. If $\F=\F_S(\L)$, then $(\L,\Delta,S)$ is said to be a \emph{locality over $\F$}.
\end{definition}

Note that we use implicitly in the above definition that, by Lemma~\ref{LocalitiesProp}(b),(d), for any $g\in\L$, the subset $S_g$ is a subgroup in $\Delta$ and $c_g$ induces an injective group homomorphism from $S_g$ to $S$. In particular, if $\H$ is a partial subgroup, then for any $h\in\H$, $S_h\cap\H$ is a subgroup of $S$ and $c_h$ induces an injective group homomorphism from $S_h\cap \H$ to $S\cap \H$.

\begin{definition}\label{D:Fnatural}
Let $\F$ be a fusion system over $S$ and let $\Delta$ be a set of subgroups of $S$.
\begin{itemize}  
\item Let $\F|_\Delta$ denote the subsystem of $\F$ generated by all the morphism sets $\Hom_\F(P,Q)$ with  $P,Q\in\Delta$. 
\item A locality $(\L,\Delta,S)$ is said to be \emph{$\F$-natural} if $\Delta$ is $\F$-closed (in the sense of Definition~\ref{D:Fclosed}) and $\F_S(\L)=\F|_{\Delta}$. 
 \end{itemize}
\end{definition}

If $(\L,\Delta,S)$ is a locality, then notice that $\Delta$ is $\F_S(\L)$-closed by axiom (L3) in Definition~\ref{LocalityDefinition}. Moreover, as $S_g\in\Delta$ for all $g\in\L$ by Lemma~\ref{LocalitiesProp}(d), we have $\F_S(\L)|_\Delta=\F_S(\L)$. Hence $(\L,\Delta,S)$ is $\F_S(\L)$-natural. We have the following lemma.

\begin{lemma}\label{L:LocalitiesPropg}
Suppose $\F$ is a fusion system. If $(\L,\Delta,S)$ is a locality over $\F$ or, more generally, an $\F$-natural locality, then the following hold:
\begin{itemize}
\item [(a)] If $\phi\in\Hom_\F(P,R)$ for some object $P\in\Delta$ and some subgroup $R\leq S$, then $R\in\Delta$ and there exists $g\in\L$ with $\phi=c_g|_P$. In particular $\Delta$ is $\F$-closed.
\item [(b)] If an object $Q\in\Delta$ is fully $\F$-normalized, then $N_S(Q)\in\Syl_p(N_\L(Q))$. 
\item [(c)] $N_\F(P)=\F_{N_S(P)}(N_\L(P))$ for every $P\in\Delta$. 
\end{itemize}
\end{lemma}

\begin{proof}
Since $\F|_\Delta=\F_S(\L)$, property (a) follows from Lemma~\ref{LocalitiesProp}(c) and the fact that $\Delta$ is closed under passing to $\L$-conjugates and overgroups in $S$. Property (b) is proved in \cite[Proposition~2.18(c)]{Chermak:2013}. Using (a) and the fact that $\Delta$ is overgroup-closed in $S$, one sees easily that (c) holds.  
\end{proof}

\subsection{Products in localities}

Given subsets $\M$ and $\N$ of a partial group $\L$ with product $\Pi\colon\D\rightarrow\L$, we set
\[\M\N:=\{\Pi(m,n)\colon m\in\M,\;n\in\N,\;(m,n)\in\D\}.\]
More generally, if $\N_1,\dots,\N_k\subseteq\L$, set
\[\N_1\N_2\cdots\N_k:=\{\Pi(n_1,\dots,n_k)\colon n_i\in\N_i\mbox{ for }i=1,\dots,k\mbox{ and }(n_1,\dots,n_k)\in\D\}.\]

\begin{remark}\label{R:MandNinMN}
 If $\M$ and $\N$ are partial groups (or more generally, if $\One\in\M\cap\N$), then $\M$ and $\N$ are both contained in $\M\N$. To see this one uses that $\Pi(f,\One)=f=\Pi(\One,f)$ for alle $f\in\L$ by \cite[Lemma~1.4(c)]{Chermak:2015}.
\end{remark}

The most important properties of products of partial normal subgroups are summarized in the following theorem.

\begin{theorem}\label{T:ProductsPartialNormal}
Let $(\L,\Delta,S)$ be a locality and let $\N_1,\dots,\N_k$ be partial normal subgroups of $\L$. Then the following hold:
\begin{itemize}
 \item [(a)] $\N_1\N_2\cdots\N_k$ is a partial normal subgroup of $\L$ with 
\[(\N_1\N_2\cdots\N_k)\cap S=(\N_1\cap S)(\N_2\cap S)\cdots(\N_k\cap S).\]
 \item [(b)] $\N_1\N_2\cdots \N_k=(\N_1\cdots \N_l)(\N_{l+1}\cdots\N_k)$ for every $1\leq l<k$.
 \item [(c)] $\N_1\N_2\cdots \N_k=\N_{1\sigma}\N_{2\sigma}\cdots\N_{k\sigma}$ for every permutation $\sigma\in S_k$.
 \item [(d)] For every $g\in \N_1\dots\N_k$ there exists $(n_1,\dots,n_k)\in\D$ with $n_i\in\N_i$ for every $i=1,\dots,k$, $g=\Pi(n_1\dots n_k)$ and $S_g=S_{(n_1,\dots,n_k)}$.  
\end{itemize}
\end{theorem}

\begin{proof}
 By \cite[Theorem~2]{Henke:2015a}, $\N_1\N_2\cdots\N_k\unlhd\L$ and parts (b),(c),(d) hold. If $\M,\N\unlhd\L$, then \cite[Theorem~1]{Henke:2015a} says $(\M\N)\cap S=(\M\cap S)(\N\cap S)$. So it follows from (b) and induction on $k$ that the second statement in (a) holds. 
\end{proof}

\begin{corollary}\label{C:ProductRestrictions}
 Let $(\L^+,\Delta^+,S)$ and $(\L,\Delta,S)$ be localities such that $\L=\L^+|_\Delta$. Let $\N_i^+\unlhd\L^+$ and $\N_i:=\N_i^+\cap\L$ for $i=1,\dots,k$. Then
\[(\N_1^+\N_2^+\cdots\N_k^+)\cap \L=\N_1\N_2\cdots\N_k.\]
where $\N_1^+\N_2^+\cdots\N_k^+$ denotes the product of $\N_1^+,\N_2^+,\dots,\N_k^+$ in $\L^+$.
\end{corollary}

\begin{proof}
Let $\Pi^+\colon\D^+\rightarrow\L^+$ and $\Pi\colon\D\rightarrow \L$ denote the partial products on $\L^+$ and $\L$ respectively. Observe that $\N_1\N_2\cdots\N_k\subseteq (\N_1^+\N_2^+\cdots\N_k^+)\cap \L$, as $\N_i\subseteq\N_i^+$ for $i=1,2,\dots,k$ and $\Pi=\Pi^+|_\D$. Let now $f\in (\N_1^+\N_2^+\cdots\N_k^+)\cap \L$. Then by Theorem~\ref{T:ProductsPartialNormal}(d), there exists $u=(n_1,\dots,n_k)\in (\N_1^+\times\N_2^+\times\cdots\times\N_k^+)\cap\D^+$ with $f=\Pi^+(u)$ and $S_f=S_u$. Since $f\in\L$, it follows $S_u=S_f\in\Delta$ and thus $u\in\D$. In particular, $u\in\W(\L)$ and thus $n_i\in\N_i^+\cap\L=\N_i$ for $i=1,2,\dots,n$. As $\Pi^+|_\D=\Pi$, it follows $f=\Pi(u)\in\N_1\N_2\cdots\N_k$.
\end{proof}

We will now look at products of partial normal subgroups with subgroups of $S$. The following general lemma will be useful.

\begin{lemma}[{\cite[Lemma~2.8]{Henke:2020}}]\label{L:NLSbiset}
If $r\in N_\L(S)$ and $f\in\L$, then $(r,f)$, $(f,r)$ and $(r^{-1},f,r)$ are words in $\D$. Moreover,
\[S_{(f,r)}=S_{fr}=S_f,\;S_{(r,f)}=S_{rf}=S_f^{r^{-1}}\mbox{ and }S_{f^r}=S_f^r.\]
\end{lemma}

Notice that the lemma above implies $R\N=\{\Pi(r,n)\colon r\in R,\;n\in\N\}$ and $\N R=\{\Pi(n,r)\colon n\in\N,\;r\in R\}$ for $\N\subseteq\L$ and $R\subseteq N_\L(S)$.

\begin{lemma}\label{ProductSubnormal}
Let $\N\unlhd\L$ and $R\leq S$. Then $\N R=R\N$ is a partial subgroup with $(\N R)\cap S=(\N\cap S)R$. Moreover, if $R_0\unlhd R\leq S$, then $\N R_0\unlhd \N R$. 
\end{lemma}

\begin{proof}
By \cite[Lemma~3.15]{Henke:Regular} $\N R=R\N$ is a partial subgroup of $\L$ and by the Dedekind lemma \cite[Lemma~1.10]{Chermak:2015}, we have $(\N R)\cap S=(\N\cap S)R$. To prove the second part of the claim, let $R_0\unlhd R\leq S$, $n\in \N$, $r\in R$ and $x\in \N R_0$ such that $u=((nr)^{-1},x,(nr))\in\D$. By Lemma~\ref{L:NLSbiset}, $S_{nr}=S_{(n,r)}$ and $S_{(nr)^{-1}}=S_{r^{-1}n^{-1}}=S_{(r^{-1},n^{-1})}$. Hence, $v=(r^{-1},n^{-1},x,n,r)\in\D$ via $S_u$ and $x^{nr}=\Pi(u)=\Pi(v)=\Pi(r^{-1},x^n,r)$. As $\N R_0$ is a partial subgroup, we have $x^n\in \N R_0$, i.e. there exist $m\in\N$ and $r_0\in R_0$ such that $x^n=mr_0$. Notice that $w=(r^{-1},m,r,r^{-1},r_0,r)\in\D$ via $S_m^r$. Hence, $x^{nr}=\Pi(r^{-1},m,r_0,r)=\Pi(w)=\Pi(m^r,r_0^r)\in\N R_0$ since $\N\unlhd \L$ and $R_0\unlhd R$. 
\end{proof}

\subsection{Homomorphisms of partial groups}\label{SS:Homomorphisms}
After introducing some basic definitions, we will outline in this subsection how partial normal subgroups correspond to kernels of homomorphisms of partial groups. This connection will play a crucial role in the proof of our main theorem. 

\begin{definition}\label{D:IsoLoc}
Let $\L$ and $\L'$ be partial groups with products $\Pi\colon\D\rightarrow \L$ and $\Pi'\colon \D'\rightarrow \L'$ respectively. Let $\alpha\colon \L\rightarrow\L'$ be a map.
\begin{itemize}
\item Write $\alpha^*$ for the map 
\[\alpha^*\colon \W(\L)\rightarrow \W(\L'), (f_1,\dots,f_n)\mapsto(f_1\alpha,\dots,f_n\alpha)\]
induced by $\alpha$. 
\item The map $\alpha$ is called a \emph{homomorphism of partial groups} if $\D\alpha^*\subseteq\D'$ and $\Pi(w)\alpha=\Pi'(w\alpha^*)$. 
\item If $\alpha$ is a homomorphism of partial groups such that $\D\alpha^*=\D'$, then $\alpha$ is called a \emph{projection of partial groups}. If $\alpha$ is in addition bijective, then $\alpha$ is called an \emph{isomorphism of partial groups}.
\item An isomorphism of partial groups from $\L$ to itself is called an \emph{automorphism} of $\L$. Write $\Aut(\L)$ for the set of automorphisms of $\L$. If $S\subseteq\L$ write $\Aut(\L,S)$ for the set of all $\alpha\in\Aut(\L)$ with $S\alpha=S$.
\item Suppose $\L$ and $\L'$ contain both the same subgroup $S$ and, for some set $\Delta$, the triples $(\L,\Delta,S)$ and $(\L',\Delta,S)$ are localities. Then a \emph{rigid isomorphism} from $(\L,\Delta,S)$ to $(\L',\Delta,S)$ is an isomorphism $\alpha\colon \L\rightarrow\L'$ of partial groups such that $\alpha$ restricts to the identity on $S$.
\item If $\alpha\colon\L\rightarrow\L'$ is a homomorphism of partial groups and $\One'$ denotes the identity in $\L'$, then
\[\ker(\alpha):=\{f\in\L\colon f\alpha=\One'\}\]
is the kernel of $\alpha$.  
\end{itemize}
\end{definition}

If $\L$ and $\L'$ are partial groups and $\alpha\colon\L\rightarrow\L'$ is a homomorphism of partial groups, then by \cite[Lemma~1.14]{Chermak:2015}, $\ker(\alpha)$ is a partial normal subgroup of $\L$. The other way around, if $(\L,\Delta,S)$ is a locality and $\N\unlhd\L$, then one can construct another partial group $\L/\N$ and a homomorphism $\L\rightarrow\L/\N$ with kernel $\N$. To make this more precise, call a subset of $\L$ of the form \[\N f:=\{nf\colon n\in\N,\;(n,f)\in\D\}\] with $f\in\L$ a \emph{coset} of $\N$ in $\L$. Writing $\L/\N$ for the set of maximal (with respect to inclusion) cosets of $\L$, by \cite[Proposition~3.14(d)]{Chermak:2015}, $\L/\N$ forms a partition of $\L$. Moreover, if
\[\alpha\colon \L\rightarrow \L/\N\]
is the map sending every element $f\in\L$ to the unique maximal coset of $\L$ containing $f$, then by \cite[Lemma~3.16]{Chermak:2015}, there is a unique partial group structure on $\L/\N$ such that the map $\alpha$ becomes a projection of partial groups. The map $\alpha$ is called the \emph{natural projection}. As $\N=\N\One$ is itself a maximal coset, we have $\ker(\alpha)=\N$.

\smallskip

Setting $\ov{\L}:=\L/\N$, we will use a ``bar notation'' similarly as for groups, i.e. for any element or subset $X$ of $\L$, the image of $X$ in $\L$ under the map $\alpha$ as above is denoted by $\ov{X}$. By \cite[Corollary~4.5]{Chermak:2015}, $(\ov{\L},\ov{\Delta},\ov{S})$ is a locality, where $\ov{\Delta}:=\{\ov{P}\colon P\in\Delta\}$.

\subsection{Proper localities and localities of objective characteristic $p$}\label{SS:Proper}

\begin{definition}
 \begin{itemize}
  \item A locality $(\L,\Delta,S)$ is said to be \emph{of objective characteristic $p$}, if $N_\L(P)$ is of characteristic $p$ for every $P\in\Delta$.
 \item A locality $(\L,\Delta,S)$ is called a \emph{proper locality} if it is of objective characteristic $p$, the fusion system $\F_S(\L)$ is saturated and $\F_S(\L)^{cr}\subseteq\Delta\subseteq\F_S(\L)^s$.
 \end{itemize}
\end{definition}

As detailed in \cite[Theorem~3.26]{Henke:Regular}, in the definition of a proper locality the assumption that $\F_S(\L)$ is saturated is unnecessary if one uses a different definition of centric radical subgroups. However, this subtlety does not play a role for the arguments in this paper, so we define proper localities as above and stick to the usual definition of centric radical subgroups as for example given in \cite[Definition~3.1]{Aschbacher/Kessar/Oliver:2011}.

\smallskip

If $\F$ is a saturated fusion system over $S$ and $\Delta$ is a set of subgroups of $S$, then under certain assumptions on $\Delta$ there exists a proper locality $(\L,\Delta,S)$ over $\F$. To make this more precise, we need the following definition. 

\begin{definition}\label{D:Subcentric}
Let $\F$ be a fusion system over $S$. 
\begin{itemize}
\item A subgroup $P$ of $S$ is called \emph{subcentric} (or more precisely \emph{$\F$-subcentric}), if $O_p(N_\F(Q))$ is $\F$-centric for every fully $\F$-normalized $\F$-conjugate $Q$ of $P$. The set of $\F$-subcentric subgroups of $S$ is denoted by $\F^s$.
\item A \emph{subcentric locality} over $\F$ is a proper locality $(\L,\Delta,S)$ over $\F$ with $\Delta=\F^s$. 
\end{itemize}
\end{definition}

As remarked before, whenever $(\L,\Delta,S)$ is a locality over a fusion system $\F$, the set $\Delta$ is $\F$-closed. The existence of a proper locality $(\L,\Delta,S)$ over $\F$ implies in addition that $\F^{cr}\subseteq\Delta\subseteq \F^s$, where the latter inclusion was shown in \cite[Lemma~6.1]{Henke:2015}. The following theorem states basically that the converse of this statement holds for a saturated fusion system $\F$. 

\begin{theorem}\label{T:ProperLocalityExistence}
Let $\F$ be a saturated fusion system over $S$. If $\Delta$ is an $\F$-closed collection of subgroups of $S$ with $\F^{cr}\subseteq\Delta\subseteq \F^s$, then there exists a proper locality $(\L,\Delta,S)$ over $\F$ which is unique up to a rigid isomorphism. The set $\F^s$ is $\F$-closed, so in particular there exists a subcentric locality over $\F$ which is unique up to a rigid isomorphism.
\end{theorem}

\begin{proof}
For $\Delta=\F^c$ this was shown in \cite{Chermak:2013}; a proof which does not rely on the classification of finite simple groups can be given through \cite{Oliver:2013} and \cite{Glauberman/Lynd}. Using that the theorem is true for $\Delta=\F^c$, a proof of the general statement is given in \cite[Theorem~A]{Henke:2015}.  
\end{proof}

\begin{definition}
We call a proper locality $(\L,\Delta,S)$ over a fusion system $\F$ a \emph{subcentric locality} if $\Delta=\F^s$.
\end{definition}

\subsection{Varying the object sets of proper localities}

The proper localities over a fixed fusion system are all closely connected as the following theorem shows. If $\L$ is a partial group we write $\fN(\L)$ for the set of partial normal subgroups of $\L$.

\begin{theorem}\label{T:VaryObjects}
Let $(\L,\Delta,S)$ be a proper locality over a fusion system $\F$ and let $\Delta^+$ be an $\F$-closed collection of subgroups of $S$ such that $\Delta\subseteq \Delta^+\subseteq \F^s$.
\begin{itemize}
\item [(a)] There exists a proper locality $(\L^+,\Delta^+,S)$ over $\F$ such that $\L=\L^+|_\Delta$. Moreover, $(\L^+,\Delta^+,S)$ is unique up to an isomorphism which restricts to the identity on $\L$; that is, if $(\wL^+,\Delta^+,S)$ is another linking locality over $\F$ with $\wL^+|_{\Delta}=\L$, then there exists an isomorphism of partial groups $\beta\colon\L^+\rightarrow\wL^+$ which is the identity on $\L$.
\item [(b)] If $(\L^+,\Delta^+,S)$ is a linking locality over $\F$ with $\L^+|_\Delta=\L$, then the map 
\[\Phi_{\L^+,\L}\colon \fN(\L^+)\rightarrow \fN(\L),\;\N^+\mapsto \N^+\cap\L
\]
 is well-defined and an inclusion-preserving bijection such that $\Phi_{\L^+,\L}^{-1}$ is also inclusion-preserving. 
\item [(c)] If $\N^+\unlhd\L^+$ and $\N:=\L\cap\N^+\unlhd\L$ such that $\F_{S\cap\N}(\N)$ is $\F$-invariant, then $\F_{S\cap\N^+}(\N^+)=\F_{S\cap\N}(\N)$. 
\item [(d)] The set $\F^s$ of subcentric subgroups of $S$ is $\F$-closed and contains $\Delta$. In particular, there exists a subcentric locality $(\L^s,\F^s,S)$ over $\F$ with $\L^s|_\Delta=\L$, and such $\L^s$ is unique up to an isomorphism which restricts to the identity on $\L$. 
\end{itemize}
\end{theorem}

\begin{proof}
Part (a) follows from \cite[Theorem~7.2(a),(b)]{Henke:2015} or \cite[Theorem~A1]{ChermakII}; part (d) follows then from Proposition~3.3 and Lemma~6.1 in \cite{Henke:2015}. Part (b) was first proved in \cite[Theorem~A2]{ChermakII}; proofs of (b) and (c) can be found in  \cite[Theorem~C(a),(b)]{Henke:2020}. 
\end{proof}

As the next lemma shows, there is some flexibility in choosing  object sets of proper localities even if the underlying partial group remains fixed.

\begin{lemma}\label{L:VaryObjects}
Let $(\L,\Delta,S)$ be a locality over $\F$ and let $X\leq S$ with $\L=N_\L(X)$. Set
\[\Delta_0:=\{P\in\Delta\colon X\leq P\}\mbox{ and }\tDelta:=\{P\leq S\colon PX\in\Delta\}.\]
Then $\Delta_0$ and $\tDelta$ are $\F$-closed. Moreover, for every $\F$-closed collection $\Delta'$ with $\Delta_0\subseteq\Delta'\subseteq \tDelta$ the following properties hold:
\begin{itemize}
 \item [(a)] $(\L,\Delta',S)$ is a locality over $\F$.
 \item [(b)] $(\L,\Delta,S)$ is of objective characteristic $p$ if and only if $(\L,\Delta',S)$ is of objective characteristic $p$. 
 \item [(c)] $(\L,\Delta,S)$ is proper if and only if $(\L,\Delta',S)$ is proper.
\end{itemize}
\end{lemma}

\begin{proof}
As $\Delta$ is closed under passing to $\L$-conjugates in $S$ and since $\L=N_\L(X)$, one sees that the sets $\Delta_0$ and $\tDelta$ are closed under passing to $\L$-conjugates in $S$ and thus also closed under $\F$-conjugacy. Moreover, the fact that $\Delta$ is overgroup-closed in $S$ implies that $\Delta_0$ and $\tDelta$ are overgroup-closed in $S$ and $\Delta\subseteq\tDelta$. So $\Delta_0$ and $\tDelta$ are $\F$-closed with $\Delta_0\subseteq \Delta\subseteq\tDelta$. 

\smallskip

\textbf{(a)} As $\Delta_0\subseteq \Delta\subseteq\tDelta$, we have  
\[\D_{\Delta_0}\subseteq\D_\Delta=\D\subseteq\D_{\tDelta}.\]
If $w=(f_1,\dots,f_k)\in\D_{\tDelta}$ via $P_0,P_1,\dots,P_k\in\tDelta$, then $w\in\D_{\Delta_0}$ via $P_0X,P_1X,\dots,P_kX$. So $\D_{\tDelta}\subseteq\D_{\Delta_0}$ and thus $\D_{\Delta_0}=\D=\D_{\tDelta}$. By assumption, $\Delta_0\subseteq\Delta'\subseteq\tDelta$ and thus $\D=\D_{\Delta_0}\subseteq\D_{\Delta'}\subseteq\D_{\tDelta}=\D$, i.e. $\D_{\Delta'}=\D$. So (L2) holds for $(\L,\Delta',S)$ in place of $(\L,\Delta,S)$. As $\Delta'$ is $\F$-closed and $\F=\F_S(\L)$, it follows that $\Delta'$ is also closed under passing to $\L$-conjugates and overgroups in $S$. Since $(\L,\Delta,S)$ is a locality, $S$ is a maximal $p$-subgroup of $\L$. Thus, $(\L,\Delta',S)$ is a locality. Notice also that the definition of $\F_S(\L)$ does not depend on $\Delta$. Hence $(\L,\Delta',S)$ is a locality over $\F$ and (a) holds.

\smallskip

\textbf{(b)} As $\Delta_0\subseteq\Delta\subseteq\tDelta$ and $\Delta_0\subseteq \Delta'\subseteq\tDelta$, we have
\begin{eqnarray*}
(\L,\tDelta,S)\mbox{ is of objective characteristic $p$}&\Longrightarrow&(\L,\Delta,S)\mbox{ is of objective characteristic $p$}\\
&\Longrightarrow&(\L,\Delta_0,S)\mbox{ is of objective characteristic $p$}
\end{eqnarray*}
and
\begin{eqnarray*}
(\L,\tDelta,S)\mbox{ is of objective characteristic $p$}&\Longrightarrow&(\L,\Delta',S)\mbox{ is of objective characteristic $p$}\\
&\Longrightarrow&(\L,\Delta_0,S)\mbox{ is of objective characteristic $p$}
\end{eqnarray*}
Suppose now that $(\L,\Delta_0,S)$ is of objective characteristic $p$. Then for every $P\in\tDelta$, the normalizer $N_\L(PX)$ is a group of characteristic $p$ as $PX\in\Delta_0$. Since $\L=N_\L(X)$, it follows from Lemma~\ref{L:MSCharp}(b) that $N_\L(P)=N_{N_\L(PX)}(P)$ is of characteristic $p$. This shows that $(\L,\tDelta,S)$ is of objective characteristic $p$. Hence all the implications above are equivalences and thus (b) holds. 

\smallskip

\textbf{(c)} Assume that $\F=\F_S(\L)$ is saturated and observe that $X\unlhd\F_S(\L)$. Hence, \cite[Proposition~I.4.5]{Aschbacher/Kessar/Oliver:2011} gives $X\leq P$ for all $P\in\F_S(\L)^{cr}$. So if $\F_S(\L)^{cr}\subseteq\Delta'\subseteq\tDelta$, then $\F_S(\L)^{cr}\subseteq \Delta_0\subseteq\Delta$. Similarly, if $\F_S(\L)^{cr}\subseteq\Delta$, then also $\F_S(\L)^{cr}\subseteq\Delta_0\subseteq\Delta'$. This shows that $\F^{cr}\subseteq \Delta$ if and only if $\F^{cr}\subseteq\Delta'$. Hence (c) follows from (b).
\end{proof}

Using Lemma~\ref{L:VaryObjects}, we are now able to prove the following lemma.

\begin{lemma}\label{L:NFTSubcentricLocality}
 Let $(\L,\Delta,S)$ be a subcentric locality over $\F$ and $T\leq S$ strongly $\F$-closed. Then $(N_\L(T),N_\F(T)^s,S)$ is a subcentric locality over $N_\F(T)$ and $C_\L(T)\unlhd N_\L(T)$ with $C_\F(T)=\F_{C_S(T)}(C_\L(T))$.  
\end{lemma}

\begin{proof}
 By \cite[Lemma~3.35(b)]{Henke:Regular}, $(N_\L(T),\Delta,S)$ is a linking locality over $N_\F(T)$, $C_\L(T)\unlhd N_\L(T)$ and $C_\F(T)=\F_{C_S(T)}(C_\L(T))$. As $\Delta=\F^s$, it follows moreover from \cite[Lemma~3.39]{Henke:Regular} that 
\[ N_\F(T)^s=\{P\leq S\colon PT\in\F^s\}=\{P\leq S\colon PT\in\Delta\}.\]
It follows thus from Lemma~\ref{L:VaryObjects} applied with $N_\L(T)$ and $T$ in the roles of $\L$ and $X$ that $(N_\L(T),N_\F(T)^s,S)$ is a proper locality and thus a subcentric locality over $N_\F(T)$.
\end{proof}

\subsection{Index prime to $p$, $p$-power index, simple and quasisimple localities} \label{SS:ResiduesLocalities}

\begin{definition}\label{D:OpLOpprimeL}
Let $(\L,\Delta,S)$ be a locality and $\N$ a partial normal subgroup of $\L$. Set $T:=S\cap \N$.
\begin{itemize}
 \item Set
\[\mathbb{K}_\N:=\{\K\unlhd\L\colon \K T=\N\}\mbox{ and }\mathbb{K}^\prime_\N:=\{\K\unlhd\L\colon T\subseteq \K\subseteq\N\}\]
and define
\[O_\L^p(\N):=\bigcap \mathbb{K}_\N\mbox{ and }O_\L^{p^\prime}(\N):=\bigcap \mathbb{K}^\prime_\N.\]
\item Write $O^p(\L)$ for $O^p_\L(\L)$ and $O^{p^\prime}(\L)$ for $O^{p^\prime}_\L(\L)$.
\item We say that $\K\unlhd \L$ has $p$-power index in $\N$ if $\K\in\mathbb{K}_\N$. Similarly, we say that $\K$ has index prime to $p$ in $\N$ if $\K\in\mathbb{K}^\prime_\N$.
\end{itemize}
\end{definition}

It is shown in \cite[Proposition~7.2]{ChermakII} or \cite[Lemma~7.2]{Henke:Regular} that $O^p_\L(\N)$ has $p$-power index in $\N$. Clearly, $O^{p^\prime}_\L(\N)$ has index prime to $p$ in $\N$.

\smallskip

Using \cite[Lemma~3.5]{Henke:Regular} one sees that
\[Z(\L)=\{f\in\L\colon f\in\D(g),\;f^g=f\mbox{ for all }g\in\L\}\]
is a partial normal subgroup of $\L$. So the following definition makes sense.

\begin{definition}\label{D:SimpleQuasisimple}
\begin{itemize}
\item A partial group $\L$ is called \emph{simple} if there are exactly two partial normal subgroups of $\L$ (namely $\{\One\}$ and $\L$).  
\item If $(\L,\Delta,S)$ is a proper locality, then $\L$ is called \emph{quasisimple} if $\L=O^p(\L)$ and $\L/Z(\L)$ is simple. 
\end{itemize}
\end{definition}

Notice that $\L\neq\{\One\}$ for every simple partial group $\L$. 

\begin{lemma}\label{L:QuasisimpleOpprime}
Let $(\L,\Delta,S)$ be a proper locality which is  quasisimple. Then $S$ is non-abelian, every partial subnormal subgroup of $\L$ is either equal to $\L$ or contained in $Z(\L)$, and $\L=O^{p^\prime}(\L)$.
\end{lemma}

\begin{proof}
By \cite[Lemma~7.10]{Henke:Regular}, $S$ is non-abelian and every partial subnormal subgroup of $\L$ is either equal to $\L$ or contained in $Z(\L)$. As $S\subseteq O^{p^\prime}(\L)\unlhd\L$, this implies $O^{p^\prime}(\L)=\L$. 
\end{proof}

\subsection{Commuting partial normal subgroups and $F^*(\L)$}\label{SS:Nperp}

\begin{definition}\label{D:Commute}
If $\L$ is a partial group with product $\Pi\colon\D\rightarrow \L$ and $\X,\Y\subseteq\L$, then we say that \emph{$\X$ commutes with $\Y$} if for all $x\in\X$ and $y\in\Y$ the following implication holds:
\[(x,y)\in\D\Longrightarrow (y,x)\in\D\mbox{ and }\Pi(x,y)=\Pi(y,x).\]
\end{definition}

If $(\L,\Delta,S)$ is a locality and $\N\unlhd\L$, then by \cite[Corollary~5.11]{Henke:Regular}, there is a (with respect to inclusion) largest partial normal subgroup of $\L$ which commutes with $\N$. We denote it by $\N^\perp$. If $(\L,\Delta,S)$ is a proper locality, then an a priori different, but equivalent definition was given in \cite[Definition~5.5]{ChermakIII}. The fact that both definitions are equivalent is shown in \cite[Corollary~9.9]{Henke:Regular}.

\begin{definition}\label{D:FstarELLocalities}
 Let $(\L,\Delta,S)$ be a proper locality and $\N\unlhd\L$. 
\begin{itemize}
\item The intersection of all partial normal subgroups $\N$ of $\L$ with $\N^\perp\subseteq\N$ and $O_p(\L)\subseteq\N$ is denoted by $F^*(\L)$ and called the \emph{generalized Fitting subgroup} of $\L$.  
\item Set $E(\L):=O^p_\L(F^*(\L))$ and call $E(\L)$ the \emph{layer} of $\L$.
\end{itemize}
\end{definition}

It is shown in \cite[Theorem~2]{Henke:Regular} that $F^*(\L)$ itself has the property that $F^*(\L)^\perp\subseteq F^*(\L)$ and $O_p(\L)\subseteq F^*(\L)$; see also \cite[Lemma~6.7, Corollary~6.9]{ChermakIII}. If $(\L,\Delta,S)$ is a subcentric locality or a regular locality as defined in the next section and $\N\unlhd\L$, then $\N^\perp\subseteq\N$ if and only if $C_S(\N)\subseteq \N$ (cf. \cite[Lemma~9.21, Corollary~10.10]{Henke:Regular}).

\subsection{Regular localities} \label{SS:Regular}

Let $\F$ be a saturated fusion system over $S$ and let $(\L^s,\F^s,S)$ be a subcentric locality over $\F$. As stated in Theorem~\ref{T:ProperLocalityExistence}, such a subcentric locality over $\F$ exists always and is unique up to a rigid isomorphism (with rigid isomorphisms introduced in Definition~\ref{D:IsoLoc}). This implies that the set $\delta(\F)$ defined below depends only on $\F$ and not on $\L^s$ (cf. \cite[Lemma~10.2]{Henke:Regular}).\newline

\begin{definition}[{cf. \cite[p.37]{ChermakIII}, \cite[Definition~10.1]{Henke:Regular}}]~\label{D:Regular} 
\begin{itemize}
 \item Let $\delta(\F)$ be the set of all subgroups $P\leq S$ such that $P\cap F^*(\L^s)\in\F^s$.  
 \item A \emph{regular locality} over $\F$ is a proper locality over $\F$ whose object set is the set $\delta(\F)$.
\end{itemize}
\end{definition}

\begin{lemma}[{cf. \cite[Lemma~6.10]{ChermakIII}, \cite[Lemma~10.4]{Henke:Regular}}]\label{L:Regular10.4}
The set $\delta(\F)$ is $\F$-closed and $\F^{cr}\subseteq\delta(\F)\subseteq\F^s$. In particular (by  Theorem~\ref{T:ProperLocalityExistence}) there exists a regular locality over $\F$, which is unique up to a rigid isomorphism.
\end{lemma}

It is one of the main advantages of regular localities that partial normal subgroups have very nice properties. We summarize this in the next theorem. Part (b) will be crucial in the proof of Theorem~\ref{main}

\begin{theorem}\label{T:mainRegularPartialNormal}
Let $(\L,\Delta,S)$ be a regular locality over $\F$, let $\N\unlhd\L$, $T:=\N\cap S$ and $\E:=\F_T(\N)$.
\begin{itemize}
\item [(a)] The subsystem $\E$ is saturated, $(\N,\delta(\E),T)$ is a regular locality over $\E$ and
\[\delta(\E)=\{P\leq T\colon PC_S(\N)\in\Delta\}.\]
\item [(b)] $C_\L(\N)=\N^\perp\unlhd\L$.
\item [(c)] We have $TC_S(T)\in\Delta$ and $\E\unlhd \F$.
\end{itemize}
\end{theorem}

\begin{proof}
Parts (a) and (b) are shown in \cite[Theorem~C]{ChermakIII} and in \cite[Theorem~3]{Henke:Regular}. In particular, $\m{E}$ is saturated, and $TC_S(\N)\in\Delta$ since $T\in\delta(\E)$. This implies $TC_S(T)\in\Delta$ as $\Delta$ is overgroup-closed in $S$. By the saturation axioms, every $\F$-automorphism of $T$ extends to an $\F$-automorphism of $TC_S(T)$, which is then by Lemma~\ref{L:LocalitiesPropg}(a) of the form $c_f|_{TC_S(T)}$ with $f\in N_\L(TC_S(T))\subseteq N_\L(T)$. This implies
\[\Aut_\F(T)=\{c_f|_T\colon f\in N_\L(T)\}.\]
By \cite[Theorem~10.16(f)]{Henke:Regular}, $c_f|_T$ induces an automorphism of $\E$ for every $f\in N_\L(T)$. Hence, every element of $\Aut_\F(T)$ induces an automorphism of $\E$. The assertion follows now from \cite[Lemma~3.28]{Henke:Regular}.
\end{proof}

By induction on the length of a subnormal series one obtains the following corollary to Theorem~\ref{T:mainRegularPartialNormal}.

\begin{corollary}[{cf. \cite[Corollary~7.10]{ChermakIII}, \cite[Corollary~10.19]{Henke:Regular}}]\label{C:RegularSubnormal}
Let $(\L,\Delta,S)$ be a regular locality over $\F$, $\H\subn\L$, $T:=\H\cap S$ and $\E:=\F_T(\H)$. Then $\E$ is  saturated and subnormal in $\F$, and  $(\H,\delta(\E),T)$ is a regular locality over $\E$. Moreover, $PC_S(\H)\in\Delta$ for every $P\in\delta(\E)$.
\end{corollary}

Since every partial subnormal subgroup can be regarded as a regular locality, the following definition makes sense.

\begin{definition}
A \emph{component} of $\L$ is a partial subnormal subgroup $\K$ of $\L$ such that $\K$ is quasisimple. We write $\Comp(\L)$ for the set of components of $\L$.
\end{definition}

As shown in \cite[Theorem~4, Theorem~11.18]{Henke:Regular}, there is a theory of components of regular localities akin to the theory of components of finite groups. For example, if $(\L,\Delta,S)$ is a regular locality and $\K_1,\dots,\K_r\in\Comp(\L)$ are pairwise distinct components of $\L$, then  the product $\K_1\K_2\cdots\K_r$ is a partial normal subgroup of $F^*(\L)$ which is an internal central product of $\K_1,\dots,\K_r$ in the sense of \cite[Definition~4.1]{Henke:Regular}. In particular, the order of the factors in the product $\K_1\K_2\cdots\K_r$ does not matter. Thus, given $\fC\subseteq\Comp(\L)$, we may form $\prod_{\K\in\fC}\K\unlhd F^*(\L)$. With the definition of the layer as in Definition~\ref{D:FstarELLocalities}, it turns out that $F^*(\L)$ is a central product of $E(\L)$ and $O_p(\L)$ and that 
\[E(\L)=\prod_{\K\in\Comp(\L)}\K.\]
Indeed, in \cite[Definition~11.8]{Henke:Regular}, the latter equation is used as the definition of $E(\L)$ in the case of regular localities; a somewhat similar definition is used in \cite[Theorem~8.5]{ChermakIII}. It is then shown that $E(\L)\unlhd\L$ and $O^p(F^*(\L))=E(\L)$ (cf. \cite[Theorem~8.5]{ChermakIII} and \cite[Lemma~11.18(a)]{Henke:Regular}). Therefore, it makes sense to define $E(\L)$ for arbitrary proper localities as in Definition~\ref{D:FstarELLocalities}. We will use the stated properties of components, $F^*(\L)$ and $E(\L)$ later on to reprove results about components of fusion systems.



\section{Partial normal subgroups and their products with $p$-subgroups}

In this section we will state and prove some lemmas about partial normal subgroups which will be needed in the proofs of Theorem~\ref{main} and Theorem~\ref{T:mainProductWithPSubgroup}.

\begin{lemma}\label{GammaDeltaLemma}
Let $(\L,\Delta,S)$ be a locality, $\N\unlhd\L$ and $T:=S\cap\N$. Suppose
\begin{equation*}
P\cap T\in\Delta\mbox{ for every }P\in\Delta.
\end{equation*}
Then the following hold:
\begin{itemize}
 \item[(a)] $\L=\D(f)$ and $c_f\in\Aut(\L)$ for every $f\in N_\L(T)$.
 \item[(b)] Setting $\Gamma:=\{P\in\Delta\colon P\leq T\}$, the triple $(\N,\Gamma,T)$ is a locality.
 \item[(c)] If $(\L,\Delta,S)$ is of objective characteristic $p$, then $(\N,\Gamma,T)$ is of objective characteristic $p$.
\end{itemize}
\end{lemma}

\begin{proof}
See \cite[Lemma~3.29(b),(c)]{Henke:Regular}. 
\end{proof}

\begin{lemma}\label{L:NTQinNNQ}
Let $(\L,\Delta,S)$ be a locality, $\N\unlhd\L$ and $T:=\N\cap S$. Let $Q\in\Delta$. Then there exists $n\in\N$ with $N_S(Q)\leq S_n$ and $N_T(Q^n)\in\Syl_p(N_\N(Q^n))$. 
\end{lemma}

\begin{proof}
By \cite[Lemma~2.9]{Chermak:2015}, there exists $g\in\L$ such that $N_S(Q)\leq S_g$ and $N_S(Q^g)\in\Syl_p(N_\L(Q^g))$. By the Frattini Lemma and the Splitting Lemma \cite[Corollary~3.11, Lemma~3.12]{Chermak:2015}, there exist $n\in\N$ and $f\in N_\L(T)$ such that $(n,f)\in\D$, $g=nf$ and $S_g=S_{(n,f)}$. As $N_\N(Q^g)$ is a normal subgroup of $N_\L(Q^g)$, we have $N_T(Q^g)=N_S(Q^g)\cap N_\N(Q^g)\in\Syl_p(N_\N(Q^g))$. By Lemma~\ref{LocalitiesProp}(b), $c_f\colon N_\L(Q^n)\rightarrow N_\L(Q^g)$ is an isomorphism of groups which takes $N_\N(Q^n)$ onto $N_\N(Q^g)$. It follows that $N_T(Q^g)^{f^{-1}}$ is a Sylow $p$-subgroup of $N_\N(Q^n)$. Since $f\in N_\L(T)$, we have $N_T(Q^g)^{f^{-1}}\leq N_T(Q^n)$. As $N_T(Q^n)$ is a $p$-subgroup of $N_\N(Q^n)$, we conclude that $N_T(Q^n)\in\Syl_p(N_\N(Q^n))$. Note that $N_S(Q)\leq S_g=S_{(n,f)}\leq S_n$.
\end{proof}

\begin{lemma}\label{L:UniquenessLemmaMsupseteqN}
Let $(\L,\Delta,S)$ be a locality of objective characteristic $p$. Suppose $\M$ and $\N$ are partial normal subgroups of $\L$ such that $T:=\M\cap S=\N\cap S$ and $TC_S(T)O_p(\L)\in\Delta$. If $\F_T(\N)=\F_T(\M)$ and $\M\supseteq \N$, then $\M=\N$. 
\end{lemma}

\begin{proof}
By the Frattini argument for localities \cite[Corollary~4.8]{Chermak:2013}, we have $\L=\N N_\L(T)$. So using the Dedekind Lemma for partial groups \cite[Lemma~1.10]{Chermak:2015}, it follows $\M=\N N_\M(T)$. As $(\L,\Delta,S)$ is of objective characteristic $p$ and $P:=TC_S(T)O_p(\L)\in\Delta$, Lemma~\ref{L:LocalitiesPropg}(c) implies that $G:=N_\L(P)$ is a  model for $N_\F(P)$. Since $P\unlhd G$ and $C_S(P)\leq P$, it follows from Lemma~\ref{L:MSCharp}(c) that $C_G(P)\leq P$. By \cite[Lemma~3.5]{Chermak:2015}, $N_\M(T)$ and $N_\N(T)$ are contained $N_\L(TC_S(T))$ and thus in $G$ by Lemma~\ref{L:OpL}. Notice also that $T$ is normal in $G$ as $T\leq P\unlhd G$ and $T$ is strongly closed in $\F_S(\L)$ by \cite[Lemma~3.1(a)]{Chermak:2015}. Thus, $M:=N_\M(T)=\M\cap G$ and $N:=N_\N(T)=\N\cap G$ are normal subgroups of $G$ with $N\leq M$. As $\F_T(\M)=\F_T(\N)$, we have $\Aut_M(T)=\Aut_N(T)$ and so  $M=NC_M(T)$. Notice that $M$ is of characteristic $p$ by Lemma~\ref{L:MSCharp}(a). Moreover, \cite[Lemma~3.1(c)]{Chermak:2015} gives that $T$ is a maximal $p$-subgroup of $\M$ and thus a normal Sylow $p$-subgroup of $M$. Therefore, $C_M(T)\leq T\leq N$. So $M=NC_M(T)=N$ and $\M=\N N_\M(T)=\N M=\N$. 
\end{proof}

Notice that Lemma~\ref{ProductSubnormal} allows us to form the fusion system $\F_{(\N\cap S)R}(\N R)$ as in Definition~\ref{D:F(H)} if $\N\unlhd\L$ and $R\leq S$.  Therefore, we are able to formulate the following lemma.

\begin{lemma}\label{L:PartialNormalInclusionLemma}
Let $(\L,\Delta,S)$ be a locality of objective characteristic $p$. Suppose $\M$ and $\N$ are partial normal subgroups of $\L$ such that $\M\cap S\leq T:=\N\cap S$, $TC_S(T)O_p(\L)\in\Delta$ and $\F_T(\M T)\subseteq \F_T(\N)$. Then $\M\subseteq\N$. 
\end{lemma}

\begin{proof}
By Theorem~\ref{T:ProductsPartialNormal}, the subset $\M\N$ is a partial normal subgroup of $\L$ with $(\M\N)\cap S=(\M\cap S)(\N\cap S)=T$. Moreover, this theorem states that, given $f\in\M\N$, there exist $m\in\M$ and $n\in\N$ with $(m,n)\in\D$, $f=mn$ and $S_f=S_{(m,n)}$. For such $f$, $m$ and $n$, the morphism $c_f|_{S_f\cap T}\colon S_f\cap T\rightarrow T$ is the composition of $c_m|_{S_f\cap T}$ with $c_n|_{S_f^m\cap T}$. Thus, $c_f|_{S_f\cap T}$ is a morphism in $\F_T(\N)\supseteq \F_T(\M T)$ for all $f\in\M\N$. This shows $\F_T(\M\N)\subseteq \F_T(\N)$. By Remark~\ref{R:MandNinMN}, $\N\subseteq\M\N$ and so $\F_T(\N)\subseteq\F_T(\M\N)$. Hence, $\F_T(\M\N)=\F_T(\N)$ and Lemma~\ref{L:UniquenessLemmaMsupseteqN} (applied with $\M\N$ in place of $\M$) implies $\M\N=\N$. Using again Remark~\ref{R:MandNinMN}, this yields $\M\subseteq\N$ as required.
\end{proof}

\begin{corollary}\label{C:UniquenessLemmaStrong}
Let $(\L,\Delta,S)$ be a locality of objective characteristic $p$. Let $\M$ and $\N$ be partial normal subgroups of $\L$ with $T:=\M\cap S=\N\cap S$ and $\F_T(\M)=\F_T(\N)$. If $TC_S(T)O_p(\L)\in\Delta$, then $\M=\N$.
\end{corollary}

\begin{proof}
This follows from Lemma~\ref{L:PartialNormalInclusionLemma} applied twice, once with the roles of $\M$ and $\N$ reversed.
\end{proof}

\begin{lemma}\label{ProductPrepare}
Let $(\L,\Delta,S)$ be a locality and $\N\unlhd\L$. Then the following hold:
\begin{itemize}
 \item [(a)] The triple $(\N S,\Delta,S)$ is a locality.
 \item [(b)] If $P\in\Delta$ such that $N_\L(P)$ is of characteristic $p$, then $N_{\N S}(P)$ is of characteristic $p$.
\end{itemize}
\end{lemma}

\begin{proof}
For part (a) see \cite[Lemma~4.1]{Chermak:2015}; recall also that $\N S$ is a partial subgroup of $\L$ by Lemma~\ref{ProductSubnormal}. For the proof of (b) fix now $P\in\Delta$ such that $N_\L(P)$ is of characteristic $p$. Then by Lemma~\ref{L:MSCharp}(a), $O^p(N_\N(P))\unlhd N_\L(P)$ is of characteristic $p$. By \cite[Lemma~6.1(b)]{Henke:2020}, $O^p(N_{\N S}(P))=O^p(N_\N(P))$. It follows now from \cite[Lemma~1.3]{MS:2012b} that $N_{\N S}(P)$ is of characteristic $p$, i.e. (b) holds.
\end{proof}

\begin{lemma}\label{ProductWithPSubgroup}
Suppose $(\L,\Delta,S)$ is a locality of objective characteristic $p$ such that $\F=\F_S(\L)$ is saturated. Let $\N\unlhd\L$ and set $T:=S\cap\N$. If $\E:=\F_T(\N)$ is normal in $\F$, then $\N R\cap S=TR$ and $\F_{TR}(\N R)\subseteq \E R=(\E R)_\F$ for every subgroup $R$ of $S$. 
\end{lemma}

\begin{proof}
Let $R\leq S$. We have seen already in Lemma~\ref{ProductSubnormal} that $(\N R)\cap S=(\N\cap S)R=TR$. To ease notation, replacing $R$ by $RT$, we assume from now on $T\leq R$ and thus $(\N R)\cap S=R$. Fix $f\in \N R$ and set $P:=R\cap S_f$. We must show that $c_f|_P\colon P\rightarrow R$ is a morphism in $\E R$, since the fusion system $\F_R(\N R)$ is by definition generated by conjugation maps of this form. Write $f=nr$ with $n\in\N$ and $r\in R$. By \cite[Lemma~2.8]{Henke:2020}, we have $S_f=S_{(n,r)}=S_n$. Thus, $c_f=c_n|_P\circ c_r|_{P^n}$ where $P^n\leq S\cap (\N R)=R$. Note that $c_r|_{P^n}$ is clearly an element of $\E R$. So to prove (a), it is enough to show that $c_n|_P\colon P\rightarrow R$ is an element of $\E R$. 

\smallskip

By \cite[Lemma~6.2]{Henke:2020}, there exist $k\in\mathbb{N}$, $R_1,R_2,\dots,R_k\in \Delta$ and $(t,n_1,n_2,\dots,n_k)\in\D$ such that $n=tn_1n_2\cdots n_k$, $S_n=S_{(t,n_1,\dots,n_k)}$, $t\in T$ and, for all $i=1,\dots,k$,
\[n_i\in O^p(N_\N(R_i)),\;S_{n_i}=R_i,\;O_p(N_{\N S}(R_i))=R_i\mbox{ and }N_S(R_i)\in\Syl_p(N_{\N S}(R_i)).\]
Setting $P_0:=P^t$ and $P_i:=P_{i-1}^{n_i}$ for $i=1,\dots,k$, we have $c_n|_P=(c_t|_{P})\circ (c_{n_1}|_{P_0})\circ\dots\circ (c_{n_k}|_{P_{k-1}})$. Clearly $c_t|_P$ is a morphism in $\E R$ as $t\in T\leq R$. So it is sufficient to show that $c_{n_i}|_{P_{i-1}}\colon P_{i-1}\rightarrow P_i$ is a morphism in $\E R$ for all $i=1,\dots,k$. To prove this fix $i\in\{1,\dots,k\}$.

\smallskip

Since $P_{i-1}$ and $P_i$ are conjugate to $P\leq R$ under sequences of elements of the partial subgroup $\N R$, we have $\<P_{i-1},P_i\>\leq (\N R)\cap S=R$. So $\<P_{i-1},P_i\>\leq Q_i:=R_i\cap R$. As $(\L,\Delta,S)$ is a proper locality and $O_p(N_{\N S}(R_i))=R_i$, it follows from \cite[Lemma~6.3, Lemma~6.14]{Henke:Regular} that $R_i\cap T\in\E^c$ and thus $Q_i\cap T=R_i\cap T\in\E^c$. Note also that $[Q_i,N_\N(R_i)]\leq [R_i,N_\N(R_i)]\leq R_i\cap\N=R_i\cap T=Q_i\cap T$. In particular, $N_\N(R_i)\subseteq N_\N(Q_i)$. Moreover, as every element of $N_\N(R_i)$ induces an $\E$-automorphism of $R_i\cap T=Q_i\cap T$, it follows that the automorphisms of $Q_i$, which are obtained by conjugation by elements of $O^p(N_{\N}(R_i))$, are in $\Ac(Q_i)$ (cf. Definition~\ref{D:Product}). In particular, $c_{n_i}|_{Q_i}\in\Ac(Q_i)$ extends $c_{n_i}|_{P_{i-1}}$. Since $Q_i\cap T=R_i\cap T\in\E^c$, it follows that $c_{n_i}|_{P_{i-1}}$ is a morphism in $\E R$ as was left to show.
\end{proof}

\begin{lemma}\label{L:E1unlhdE2Help}
Let $(\L,\Delta,S)$ be a locality of objective characteristic $p$ such that $\F=\F_S(\L)$ is saturated. Let $\M$ and $\N$ be partial normal subgroups of $\L$ and $T:=S\cap\N$. Assume that $TC_S(T)O_p(\L)\in\Delta$. Suppose moreover that $\F_T(\N)$ is saturated and that $\F_{S\cap\M}(\M)$ is a normal subsystem of $\F_T(\N)$ and of $\F$. Then $\M\subseteq\N$.
\end{lemma}

\begin{proof}
As $\E_\M:=\F_{S\cap\M}(\M)\unlhd\E_\N:=\F_T(\N)$, we have in particular that $S\cap\M\leq T$. Hence, by Lemma~\ref{L:PartialNormalInclusionLemma}, it is sufficient to prove that $\F_T(\M T)\subseteq \E_\N$. By Lemma~\ref{ProductWithPSubgroup} applied with $\M$ and $T$ in the role of $\N$ and $R$, we have $\F_T(\M T)\subseteq \E_\M T=(\E_\M T)_\F$. By Remark~\ref{R:ERsubG}, we have moreover $\E_\M T\subseteq\E_\N$. So the assertion follows. 
\end{proof}

The following proposition can be regarded as a first step towards the proof of Theorem~\ref{T:mainProductWithPSubgroup}.

\begin{prop}\label{P:ERNR0}
Let $(\L,\Delta,S)$ be a proper locality over $\F$ and let $\N$ be a partial normal subgroup of $\L$. Set $T:=S\cap\N$ and suppose that $\E=\F_T(\N)$ is normal in $\F$. Assume furthermore 
\begin{align}\label{dagger}\tag{$\dagger$}
UC_S(\N)O_p(\L)\in\Delta\mbox{ for every }U\in\E^{cr}.
\end{align}
\noindent Then $(\E R)^{cr}\subseteq\Delta$ and $\E R=\F_{TR}(\N R)$ for every subgroup $R$ of $S$ with $C_S(\N)\subseteq RT$. Moreover, $(\N S,\Delta,S)$ is a proper locality over $\E S$. 
\end{prop}

\begin{proof}
Replacing $R$ by $RT$ we may assume that $T\leq R$ and $C_S(\N)\leq R$. Recall from Lemma~\ref{ProductSubnormal} that $\N R\cap S=TR=R$ and $\N R$ is a partial subgroup. We know moreover from Lemma~\ref{ProductPrepare}(a) that $(\N S,\Delta,S)$ is a locality. Set $X:=O_p(\L)$ and 
\[\tDelta:=\{P\leq S\colon PX\in\Delta\}.\]
By Lemma~\ref{L:OpL}, we have $\L=N_\L(X)$ and thus $\N S=N_{\N S}(X)$. So Lemma~\ref{L:VaryObjects} yields firstly that $(\L,\tDelta,S)$ is a proper locality and secondly that $(\N S,\Delta,S)$ is a proper locality if and only if $(\N S,\tDelta,S)$ is a proper locality. Moreover (\ref{dagger}) means that $UC_S(\N)\in\tDelta$ for every $U\in\E^{cr}$. Hence, replacing $\Delta$ by $\tDelta$, we may assume from now on that
\begin{align}\label{daggerdagger}\tag{$\dagger\dagger$}
UC_S(\N)\in\Delta\mbox{ for every }U\in\E^{cr}.
\end{align}
Throughout we will use that $C_S(\N)=C_S(\E)$ by \cite[Proposition~4]{Henke:2015}, and that $C_S(\E)$ is a strongly closed subgroup (cf. \cite[Chapter~6]{Aschbacher:2011} or \cite[Theorem~1]{Henke:2018}). We will proceed in three steps.

\smallskip

\emph{Step~1:} We show $(\E R)^{cr}\subseteq \Delta$ and $\Ac(P)\leq \Aut_\N(P)$ for every $P\in (\E R)^{cr}$. 

\smallskip

To prove this, fix $P\in (\E R)^{cr}$. As $\E$ is a normal subsystem of $\E R$ by Lemma~\ref{CSENormalER}, it follows from \cite[Lemma~1.20(d)]{AOV1} that $P\cap T\in \E^{cr}$. So \eqref{daggerdagger} implies 
\[Q:=(P\cap T)C_S(\E)=(P\cap T)C_S(\N)\in\Delta.\]
As  $P$ is centric radical in $\E R$ and $C_S(\E)$ is by Lemma~\ref{CSENormalER} normal in $\E R$, \cite[Proposition~I.4.5]{Aschbacher/Kessar/Oliver:2011} gives $C_S(\E)\leq P$. Hence, $Q\leq P$ and thus $P\in \Delta$. Because of the arbitrary choice of $P$, this shows $(\E R)^{cr}\subseteq \Delta$.

\smallskip

It remains to show that $\Ac(P)\leq \Aut_\N(P)$. By Lemma~\ref{L:NTQinNNQ}, there exists $m\in\N$ with $N_S(Q)\leq S_m$ and $N_T(Q^m)\in\Syl_p(N_\N(Q^m))$. Note that $P\leq N_S(Q)\leq S_m$. By Lemma~\ref{ProductWithPSubgroup}, we have moreover $\F_R(\N R)\subseteq \E R$. In particular, $c_m\colon S_m\cap R\rightarrow R$ is a morphism in $\E R$ and thus $P^m\in (\E R)^{cr}$. By Lemma~\ref{LocalitiesProp}(b), we have $(c_m|_P)^{-1}\Aut_\N(P)(c_m|_P)=\Aut_\N(P^m)$. Observe also that $(c_m|_P)^{-1}\Ac(P)(c_m|_P)=\Ac(P^m)$. Hence, it is sufficient to show that $\Ac(P^m)\leq \Aut_\N(P^m)$. Since $T$ is strongly closed, $P^m\cap T=(P\cap T)^m$ and thus $Q^m=(P^m\cap T)C_S(\N)$. Hence, replacing $P$ by $P^m$, we may assume that
\[ N_T(Q)\in\Syl_p(N_\N(Q)).\]
Let now $\alpha\in\Ac(P)$ be a $p^\prime$-element. By Lemma~\ref{L:LocalitiesPropg}(a), there exists $f\in N_\L(P)$ with $\alpha=c_f|_P$. 
As $T$ and $C_S(\E)$ are strongly closed, $Q$ is normalized by $\alpha$. So \[f\in G:=N_\L(Q).\] 
From the definition of $\Ac(P)$, we see moreover that $\alpha|_{P\cap T}\in\Aut_\E(P\cap T)$. Hence, as $\E=\F_T(\N)$, there exist $n_1,\dots,n_k\in \N$ such that $P\cap T\leq S_{(n_1,\dots,n_k)}$ and
\[\alpha|_{P\cap T}=(c_{n_1}|_{P_0})(c_{n_2}|_{P_1})\dots (c_{n_k}|_{P_{k-1}})\]
where $P_0:=P\cap T$ and $P_i:=P_{i-1}^{n_i}$ for $i=1,\dots,k$. Define now $Q_i:=P_iC_S(\N)$ for $i=0,1,\dots,k$. Note that $P_k=P\cap T$,  $Q_0=(P\cap T)C_S(\N)=Q=Q_k$ and $n_i\in N_\N(Q_{i-1},Q_i)$. Using $Q\in\Delta$, we conclude that $(n_1,\dots,n_k)\in\D$ via $Q_0,Q_1, \dots,Q_k\in\Delta$. In particular, by Lemma~\ref{LocalitiesProp}(c), we have 
\[\alpha|_{P\cap T}=c_n|_{P\cap T}\mbox{ where }n:=\Pi(n_1,\dots,n_k)\in N_\N(P\cap T)\subseteq N:=N_\N(Q).\]
Observe that $N$ is a normal subgroup of $G$. Let $N_S(Q)\leq S_0\in\Syl_p(G)$. Set 
\[\F_0=\F_{S_0}(G),\;T_0=N_T(Q)\mbox{ and }\E_0:=\F_{T_0}(N).\]
As $T_0\leq S_0\cap N$ and $T_0\in\Syl_p(N)$, it follows that $T_0=S_0\cap N$. In particular, since $P\leq N_S(Q)\leq S_0$, we have $P\cap T_0=P\cap T$. Moreover, we saw above that $P\cap T\in \E^{cr}\subseteq\E^c$. Since $\F_{T_0}(N)\subseteq \E$, it follows $P\cap T_0=P\cap T\in \F_{T_0}(N)^c$. Note now that $\alpha\in\Aut_{\F_0}(P)$ is a $p^\prime$ automorphism with $[P,\alpha]\leq P\cap T$ and $\alpha|_{P\cap T}=c_n|_{P\cap T}\in\Aut_N(P\cap T)$. So using the notation introduced in Definition~\ref{D:Product}, we have $\alpha\in\Ac_{\F_0,\E_0}(P)$.
We use now that $\Ac_{\F_0,\E_0}(P)=O^p(\Aut_N(P))$ by \cite[Proposition~3.1]{Henke:2013}. So there exists $x\in N_N(P)\subseteq N_\N(P)$ with $\alpha=c_x|_P$. As $\alpha$ was arbitrary, this shows $\Ac(P)\leq \Aut_\N(P)$ and Step~1 is complete. 

\smallskip

\emph{Step~2:} We show that $\E R=\F_R(\N R)$. By Lemma~\ref{ProductWithPSubgroup}, it is enough to show that $\E R\subseteq \F_R(\N R)$. Since $\E R$ is saturated by \cite[Theorem~1]{Henke:2013}, it is by Alperin's Fusion Theorem \cite[Theorem~I.3.6]{Aschbacher/Kessar/Oliver:2011} generated by the automorphism groups $\Aut_{\E R}(P)$ with $P\in (\E R)^{cr}\cap (\E R)^f$. So fixing $P\in (\E R)^{cr}\cap (\E R)^f$, we need to show that $\Aut_{\E R}(P)$ consists of morphisms in $\F_R(\N R)$. As $P$ is fully $\E R$-normalized, the Sylow axiom yields $\Aut_R(P)\in\Syl_p(\Aut_{\E R}(P))$ and thus $\Aut_{\E R}(P)=\Aut_R(P)O^p(\Aut_{\E R}(P))$. Moreover, by \cite[Lemma~4.7]{Henke:2013}, $O^p(\Aut_{\E R}(P))=\Ac(P)$. Since clearly $\Aut_R(P)\leq \Aut_{\N R}(P)$, it follows now from Step~1 that $\Aut_{\E R}(P)\leq \Aut_{\F_R(\N R)}(P)$ and thus $\E R=\F_R(\N R)$. 

\smallskip

\emph{Step~3:} We show that $(\N S,\Delta,S)$ is a proper locality over $\E S$. By Step~2, $\F_S(\N S)=\E S$. Since $(\L,\Delta,S)$ is by assumption a proper locality, Lemma~\ref{ProductPrepare}(b) gives that $N_{\N S}(P)$ is of characteristic $p$ for every $P\in\Delta$. Moreover, by Step~1, $(\E S)^{cr}\subseteq \Delta$. This completes Step~3 and thus the proof of the proposition.
\end{proof}

To ease notation write $\F^q_{O_p(\F)}$ for the set of elements $P\in\F^q$ with $O_p(\F)\leq P$. Similarly write $\delta(\F)_{O_p(\F)}$ for the set of $P\in \delta(\F)$ with $O_p(\F)\leq P$.

\begin{corollary}\label{C:ERNR0}
Let $(\L,\Delta,S)$ be a proper locality over $\F$ such that $\F^q_{O_p(\F)}\subseteq\Delta$ or $\delta(\F)_{O_p(\F)}\subseteq\Delta$. Let $\N$ be a partial normal subgroup of $\L$, set $T:=S\cap\N$ and suppose that $\E=\F_T(\N)$ is normal in $\F$. Then $(\E R)^{cr}\subseteq\Delta$ and $\E R=\F_{TR}(\N R)$ for every subgroup $R$ of $S$ with $C_S(\N)\subseteq TR$. Moreover, $(\N S,\Delta,S)$ is a proper locality over $\E S$.  
\end{corollary}

\begin{proof}
By Proposition~\ref{P:ERNR0}, it is sufficient to verify the condition \eqref{dagger}. We will use that $O_p(\L)=O_p(\F)$ by \cite[Proposition~5]{Henke:2015}. If $U\in\E^{cr}$, then $U\in\delta(\E)$ by Lemma~\ref{L:Regular10.4}. Hence, Theorem~\ref{T:mainRegularPartialNormal}(a) gives  $UC_S(\N)\in\delta(\F)$ and thus $UC_S(\N)O_p(\F)\in\delta(\F)_{O_p(\F)}$ as $\delta(\F)$ is overgroup-closed. So if $\delta(\F)_{O_p(\F)}\subseteq\Delta$, then \eqref{dagger} holds.

\smallskip

It follows from \cite[Proposition~4]{Henke:2015} that $C_S(\N)=C_S(\E)\in C_\F(\E)^{cr}$. Hence, Lemma~\ref{L:tE}(b),(c) yields $UC_S(\N)\in\F^q$ and thus $UC_S(\N)O_p(\L)\in\F^q_{O_p(\F)}$ for every $U\in\E^{cr}$. So \eqref{dagger} holds also in the case $\F^q_{O_p(\F)}\subseteq\Delta$. 
\end{proof}

\section{The correspondence between partial normal subgroups and normal subsystems}

In this section we will prove that there is a one-to-one correspondence between the normal subsystems of a saturated fusion system $\F$ and the partial normal subgroups of a proper locality over $\F$. More precisely, we show Theorem~\ref{main} except for the statement that the map $\Psi_\L$ is given by $\N\mapsto \F_{S\cap\N}(\N)$ if $F^*(\F)^{cr}\subseteq \Delta$. We postpone the proof the latter statement until we have revisited the necessary background on the generalized Fitting subsystem. The results obtained in this section will be the basis to prove further one-to-one correspondences in later sections.

\subsection{A construction of a partial normal subgroup}

In this subsection, we prove the following proposition, which is our first step towards showing Theorem~\ref{main}. The reader might want to recall Definition~\ref{D:Fnatural}.

\begin{theorem}\label{mainStepI}
Let $\F$ be a saturated fusion system over a $p$-group $S$, and let $\E$ be a normal subsystem of $\F$ over $T\leq S$. Suppose $(\L,\Delta,S)$ is an $\F$-natural locality of objective characteristic $p$ such that
\[\Gamma:=\{P\cap T\colon P\in\Delta\}\subseteq\Delta.\]
Then there exists a partial normal subgroup $\N$ of $\L$ such that $T=\N\cap S$ and $\E|_{\Gamma}=\F_T(\N)$. 
\end{theorem}

The remainder of this section is devoted to the proof of Theorem~\ref{mainStepI}. Fix a saturated fusion system $\F$ over $S$ with a normal subsystem $\E$ over $T$. Assume the proposition is false for $(\F,\E,T)$ and some choice of $(\L,\Delta,S)$. Then there exists an $\F$-natural locality $(\L^+,\Delta^+,S)$ of objective characteristic $p$ such that
\[\Gamma^+:=\{P\cap T\colon P\in\Delta^+\}\subseteq\Delta^+,\]
but there does not exist any partial normal subgroup $\N^+$ of $\L^+$ such that $T=\N^+\cap S$ and $\E|_{\Gamma^+}=\F_T(\N^+)$. Among all localities with these properties we pick $(\L^+,\Delta^+,S)$ such that $|\Gamma^+|$ is minimal. We will proceed now in a series of lemmas until we reach a contradiction.

\begin{property}\label{Gamma00}
The set $\Gamma^+$ is closed under $\F$-conjugacy and is overgroup-closed in $T$.  
\end{property}

\begin{proof}
Since $\Gamma^+\subseteq\Delta^+$, we have $\Gamma^+=\{P\in\Delta^+\colon P\leq T\}$. As $\Delta^+$ is $\F$-closed by Lemma~\ref{L:LocalitiesPropg}(a) and as $T$ is strongly closed, it follows that $\Gamma^+$ is closed under $\F$-conjugacy and under passing to overgroups in $T$.  
\end{proof}

Let $R\in\Gamma^+$ be of minimal order and choose $R$ such that $R$ is fully $\F$-normalized. Note that this is possible since $\Gamma^+$ is closed under $\F$-conjugacy by \ref{Gamma00}. Set \[\Gamma:=\Gamma^+\backslash R^\F.\] 

\begin{property}\label{Gamma0}
If $R^*\in R^\F$ and $R^*<Q\leq T$, then $Q\in\Gamma$.
\end{property}

\begin{proof}
By \ref{Gamma00}, $R\in\Gamma^+$ implies  $Q\in\Gamma^+$ if $R^*<Q\leq T$ for some $R^*\in R^\F$. Since $|Q|>|R^*|=|R|$, we have $Q\not\in R^\F$ and hence $Q\in\Gamma$.
\end{proof}

\begin{property}\label{NormalSubgroup}
The following hold: 
\begin{itemize}
\item[(a)] $N_\F(R)$ and $N_\E(R)$ are saturated subsystems of $\F$, and $N_\E(R)$ is normal in $N_\F(R)$.
\item[(b)] We have $N_S(R)\in\Syl_p(N_{\L^+}(R))$ and $N_{\L^+}(R)$ is  a model for $N_\F(R)$.
\item[(c)] There exists a unique normal subgroup $K$ of the group $N_{\L^+}(R)$ such that $N_T(R)=K\cap N_S(R)$ and $\F_{N_T(R)}(K)=N_\E(R)$.
\end{itemize}
\end{property}

\begin{proof}
Property (a) is a special case of Lemma~\ref{LocalNormalSubsystems}, and property (b) follows from Lemma~\ref{L:LocalitiesPropg}(b),(c) and the assumption that $(\L^+,\Delta^+,S)$ of objective characteristic $p$. By (b) and \cite[Theorem~II.7.5]{Aschbacher/Kessar/Oliver:2011} (which we generalized in Lemma~\ref{ModelLemma}), every normal subsystem of $N_\F(R)$ is the $p$-fusion system of a unique normal subgroup of $N_{\L^+}(R)$. Together with (a) this implies (c).
\end{proof}

\begin{property}\label{Gamma}
The subgroup $T$ is an element of $\Gamma$ and so $\Gamma\neq \emptyset$. Moreover, $\Gamma$ is closed under $\F$-conjugacy and is overgroup-closed in $T$.
\end{property}

\begin{proof}
Assume $T\not\in\Gamma$. As $S\in\Delta^+$, we have $T\in\Gamma^+$. So it follows $T=R$.  Since $R\in\Gamma^+$ is of minimal order, this implies  $\Gamma^+=\{T\}$. So $\E|_{\Gamma^+}=N_\E(T)$ and, by definition of $\Gamma^+$, every element of $\Delta^+$ contains $T$. In particular, for every $g\in\L^+$, we have $T\leq S_g$. As $T$ is strongly closed in $\F$ and $T\in\Delta^+$, it follows that $\L^+=N_{\L^+}(T)$ is a group. So the partial normal subgroups of the locality $(\L^+,\Delta^+,S)$ are precisely the normal subgroups of the group $\L^+$. Hence, it follows from \ref{NormalSubgroup}(c) that there is a unique partial normal subgroup $\N^+$ of $\L^+$ such that $\N^+\cap S=T$ and $\F_T(\N^+)=N_\E(T)=\E|_{\Gamma^+}$. This is a contradiction to the assumption that $(\L^+,\Delta^+,S)$ is a counterexample. Thus $T\in\Gamma$.

\smallskip

Since $\Gamma^+$ is by \ref{Gamma0} closed under $\F$-conjugacy and since $\Gamma=\Gamma^+\backslash R^\F$, the set $\Gamma$ is closed under $\F$-conjugacy. Given $P\in\Gamma$ and $P\leq Q\leq T$, we have $Q\in\Gamma^+$ as $\Gamma^+$ is overgroup-closed in $T$. If $|Q|=|P|$ then $Q=P\in\Gamma$. If $|Q|> |P|$ then again $Q\in\Gamma$ as $R\in\Gamma^+$ is of minimal order. Hence, $\Gamma$ is overgroup-closed in $T$.
\end{proof}

Define now $\Delta$ to be the set of subgroups of $S$, which contain an element of $\Gamma$. Then $\Delta$ is overgroup-closed in $S$ by construction. As $\Gamma$ is by \ref{Gamma} non-empty and closed under $\F$-conjugacy, it follows that $\Delta$ is non-empty and closed under $\F$-conjugacy. So $\Delta\neq\emptyset$ is $\F$-closed. Moreover,  $\Gamma$ is by \ref{Gamma} overgroup-closed in $T$, which implies that 
\[\Gamma=\{P\cap T\colon P\in\Delta\}\subseteq\Delta.\]
In particular, $\Delta$ is $\F_S(\L^+)$-closed as $(\L^+,\Delta^+,S)$ is $\F$-natural. Since $\Gamma\subseteq \Gamma^+\subseteq\Delta^+$ and $\Delta^+$ is overgroup-closed, we have moreover $\Delta\subseteq \Delta^+$. Thus, setting
\[\L:=\L^+|_{\Delta}\] 
the triple $(\L,\Delta,S)$ forms a locality. It follows from Lemma~\ref{L:LocalitiesPropg}(a) and the definition of the restriction that $(\L,\Delta,S)$ is $\F$-natural. Observe that $(\L,\Delta,S)$ is of objective characteristic $p$ as $(\L^+,\Delta^+,S)$ is of objective characteristic $p$. So it follows from the choice of $(\L^+,\Delta^+,S)$, in particular from the minimality of $|\Gamma^+|$, that there exists a partial normal subgroup $\N$ of $\L$ such that
\[\N\cap S=T\mbox{ and }\F_T(\N)=\E|_{\Gamma}.\]
We fix such $\N$ throughout. Write $\Pi^+\colon\D^+\rightarrow\L^+$ and $\Pi\colon\D\rightarrow\L$ for the partial products on $\L^+$ and $\L$. Recall from the definition of the restriction that this means that $\D=\D_\Delta$ and $\Pi=\Pi^+|_\D$, where  $\D_\Delta$ is formed inside of $\L^+$.

\begin{property}\label{NSPDelta}
Let $P\in\Delta^+\backslash\Delta$. Then $N_T(P)\in\Gamma$ and $N_S(P)\in\Delta$.
\end{property}

\begin{proof}
Since $P\in\Delta^+\backslash\Delta$, we have $P\cap T\in\Gamma^+\backslash\Gamma=R^\F$. Moreover, as $T\in\Gamma$ by \ref{Gamma}, we have $T\not\leq P$ and thus $P$ is properly contained in the $p$-group $TP$. So $P<N_{TP}(P)=N_T(P)P$ and $P\cap T<N_T(P)$. Now \ref{Gamma0} gives  $N_T(P)\in\Gamma$ and hence $N_S(P)\in\Delta$.  
\end{proof}

Set
\[M:=N_{\L^+}(R).\]
By \ref{NormalSubgroup}, $M$ is a model for $N_\F(R)$, the subsystem $N_\E(R)$ is normal in $N_\F(R)$, and there exists a unique normal subgroup $K$ of $M$ such that \[K\cap N_S(R)=N_T(R)\mbox{ and }\F_{N_T(R)}(K)=N_\E(R).\] We fix such $K$ throughout.

\begin{property}\label{DoubleStar}
Let $Q\leq N_T(R)$ with $R<Q$. Then $N_{N_\N(Q)}(R)=N_K(Q)$.
\end{property}

\begin{proof}
As $R<Q\leq T$, it follows from \ref{Gamma0} that $Q\in \Gamma\subseteq \Delta$. As $M$ is a model for $N_\F(R)$, we can pick  $g\in M$ with $Q^g\in N_\F(R)^f$. Then $Q^g\leq N_S(R)\leq S$ and in particular $g\in \L=\L^+|_{\Delta}$. Moreover, by Lemma~\ref{LocalitiesProp}(b), 
\[N_\N(Q^g)^{g^{-1}}=N_\N(Q)\]
and $c_{g^{-1}}\colon N_\N(Q^g)\rightarrow N_\N(Q)$ is an isomorphism of groups. Note that $c_{g^{-1}}$ leaves $R$ invariant as $g\in M$. Hence, $N_{N_\N(Q^g)}(R)^{g^{-1}}=N_{N_\N(Q)}(R)$. As $K$ is normal in $M$, we have furthermore $N_K(Q^g)^{g^{-1}}=N_K(Q)$. So if $N_{N_\N(Q^g)}(R)=N_K(Q^g)$, then the assertion holds. Hence, replacing $Q$ by $Q^g$, it is enough to show that $N_{N_\N(Q)}(R)=N_K(Q)$ in the case that $Q$ is fully normalized in $N_\F(R)$. Thus we assume from now on that $Q\in N_\F(R)^f$. Then 
\[\F_0:=N_{N_\F(R)}(Q)\]
is saturated, and $N_M(Q)$ is a model for $\F_0$ by \cite[Proposition~I.5.4]{Aschbacher/Kessar/Oliver:2011} and Lemma~\ref{L:MSCharp}(b). In particular, \[S_0:=N_S(R)\cap N_S(Q)\in\Syl_p(N_M(Q)).\] 
As $N_\E(R)\unlhd N_\F(R)$, Lemma~\ref{LocalNormalSubsystems} yields that $Q\in N_\E(R)^f$ and that 
\[\E_0:=N_{N_\E(R)}(Q)\mbox{ is a normal subsystem of $\F_0$ over }T_0:=N_T(R)\cap N_T(Q).
 \]
 As $K$ is a model for $N_\E(R)$, it follows from \cite[Proposition~I.5.4]{Aschbacher/Kessar/Oliver:2011} and Lemma~\ref{L:MSCharp}(b) that $N_K(Q)$ is a model for $\E_0$. Note also that $N_K(Q)$ is a normal subgroup of $N_M(Q)$. By \cite[Theorem~II.7.5]{Aschbacher/Kessar/Oliver:2011}, there is a unique normal subgroup $N_0$ of $N_M(Q)$ such that $T_0\in \Syl_p(N_0)$ and $\F_{T_0}(N_0)=\E_0$. So since the corresponding properties hold with $N_K(Q)$ in place of $N_0$, it is sufficient to show that \[N:=N_{N_\N(Q)}(R)\] is a normal subgroup of $N_M(Q)$ such that $T_0\in\Syl_p(N)$ and $\F_{T_0}(N)=\E_0$. To see that, note first of all that $N_\L(Q)=N_{\L^+}(Q)$ as $Q\in\Delta$. Hence, $N_M(Q)=N_{N_{\L^+}(R)}(Q)=N_{N_{\L^+}(Q)}(R)=N_{N_\L(Q)}(R)$. As $\N$ is a partial normal subgroup of $\L$, it follows that $N=N_{N_\L(Q)}(R)\cap \N$ is a normal subgroup of $N_M(Q)$. In particular, as $S_0\in\Syl_p(N_M(Q))$, we have that $N\cap S_0=\N\cap S_0=(\N\cap S)\cap S_0=T\cap S_0=T_0$ is a Sylow $p$-subgroup of $N$. So it remains only to prove that $\F_{T_0}(N)=\E_0$. Let $n\in N$. Since $\F_T(\N)=\E|_\Gamma\subseteq\E$ and $\<Q,R\>\leq S_n\cap T_0$, it follows that the map $c_n\colon S_n\cap T_0\rightarrow T_0$ is a morphism in $\E$ normalizing $Q$ and $R$. Thus this map is a morphism in $\E_0$ showing $\F_{T_0}(N)\subseteq \E_0$. Every morphism in $\E_0$ extends to a morphism $\phi$ in $\m{E}$ acting on $Q$ and $R$. Appealing to Lemma~\ref{LocalitiesProp}(c) and using that $\E|_\Gamma=\F_T(\N)$ and $Q\in\Gamma$, one sees that such a morphism $\phi$ is realized as a conjugation map by an element of $N=N_{N_\N(Q)}(R)$. So we have $\E_0=\F_{T_0}(N)$. As argued above this yields the assertion. 
\end{proof}

\begin{property}\label{KinKernel}
 Let $g\in K$ and $R<Q\leq N_T(R)$ such that $Q^g\leq N_T(R)$. Then $g\in\N$. 
\end{property}

\begin{proof}
By Alperin's fusion theorem for groups \cite[Main Theorem]{Alperin:1967a} applied in the group $K$, there exist sequences of subgroups $Q=P_0,P_1,\dots,P_l=Q^g\leq N_T(R)$ and $Q_1,\dots,Q_l\leq N_T(R)$ as well as elements  $g_i\in N_K(Q_i)$ for $i=1,\dots,l$ such that $\<P_{i-1},P_i\>\leq Q_i$, $P_{i-1}^{g_i}=P_i$ and $g=g_1g_2\dots g_l$ (where we conjugate and form the latter product in $K$). Since the product in $K$ is the restriction of $\Pi^+$ to $\W(K)$, it follows that $g=\Pi^+(g_1,\dots,g_l)$. As $R$ is normal in $M$ and thus in $K$, we have $R\leq P_i\leq Q_i$ for $i=1,\dots,r$. Note that $R$ is properly contained in $Q_i$ as $|R|<|Q|=|P_i|\leq |Q_i|$ for $i=1,\dots,n$ . Thus, \ref{DoubleStar} gives $g_i\in N_K(Q_i)=N_{N_\N(Q_i)}(R)\subseteq \N$ for $i=1,\dots,l$. Furthermore, by \ref{Gamma0}, we have $Q\in\Gamma\subseteq\Delta$ and thus $(g_1,\dots,g_l)\in\D=\D_\Delta$ via $Q$. As $\Pi=\Pi^+|_\D$ and $\N$ is a partial subgroup of $\L$, it follows $g=\Pi^+(g_1,\dots,g_l)=\Pi(g_1,\dots,g_l)\in\N$.
\end{proof}

Recall that $\N$ is a partial normal subgroup of $\L$ with $\N\cap S=T$ and $\F_T(\N)=\m{E}|_\Gamma$. We consider now the  quotient locality  $\L/\N$ as described in Subsection~\ref{SS:Homomorphisms}. Let
\[\alpha\colon \L\rightarrow \L/\N\]
be the natural projection, set $\ov{\L}:=\L/\N$ and adapt the bar notation. Recall that $\ov{\L}$ is a partial group and $\alpha$ is a homomorphism of partial groups with $\ker(\alpha)=\N$. Write $\ov{\Pi}$ for the product on $\ov{\L}:=\L/\N$. Setting $\ov{\Delta}:=\{\ov{P}\colon P\in\Delta\}$, the triple $(\ov{\L},\ov{\Delta},\ov{S})$ is a locality by \cite[Corollary~4.5]{Chermak:2015}. As $T\in\Delta$, we have $\{\ov{\One}\}=\ov{T}\in\ov{\Delta}$. Hence, $\ov{\L}$ is a group and $\ov{\Delta}$ is the set of subgroups of $\ov{S}$.

\smallskip

Set now $Q_T:=N_T(R)$. By the Frattini argument, we have $M=N_M(Q_T)K$. Hence, there is a natural isomorphism
\[\mu\colon M/K\longrightarrow N_M(Q_T)/N_K(Q_T)\]
defined by $gK\mapsto gN_K(Q_T)$ for all $g\in N_M(Q_T)$. By \ref{NSPDelta}, we have $Q_T=N_T(R)\in\Gamma\subseteq\Delta$ and therefore $N_M(Q_T)=N_{N_{\L^+}(Q_T)}(R)=N_{N_\L(Q_T)}(R)$ is a subgroup of $\L$. Hence, $\alpha$ induces a group homomorphism $N_M(Q_T)\rightarrow \ov{\L}$ with kernel $N_{N_\N(Q_T)}(R)$. By \ref{DoubleStar}, we have $N_{N_\N(Q_T)}(R)=N_K(Q_T)$. Hence, $\alpha$ induces an an injective group homomorphism
\[\ov{\alpha}\colon N_M(Q_T)/N_K(Q_T)\longrightarrow \ov{\L}\mbox{ with }gN_K(Q_T)\mapsto g\alpha\mbox{ for all }g\in N_M(Q_T).\] 
Writing $\pi\colon M\rightarrow M/K$ for the natural epimorphism, we define now $\alpha_R:=\pi\mu\ov{\alpha}$ to be the composition of the maps 
\[M\xrightarrow{\;\pi\;}M/K\xrightarrow{\;\mu\;}N_M(Q_T)/N_K(Q_T)\xrightarrow{\;\ov{\alpha}\;}\ov{\L}.\]
Then $\alpha_R\colon M\rightarrow \ov{\L}$ is a group homomorphism. As $\mu$ and $\ov{\alpha}$ are injective, we have $\ker(\alpha_R)=\ker(\pi)=K$. 

\begin{property}\label{AmalgamGetHomApply}
$\alpha|_{N_\L(R)}=\alpha_R|_{N_\L(R)}$.
\end{property}

\begin{proof}
Let $y\in N_\L(R)$. As $y\in\L$, we have $S_y\in\Delta$ and thus $R<S_y\cap T$. Hence, 
\[R<P:=N_{S_y\cap T}(R)\leq N_T(R)=Q_T\]
and, by \ref{Gamma0}, $P\in\Gamma\subseteq\Delta$. Now $P^y$ is defined in $\L$ and $P^y\leq Q_T$. As $N_\L(R)\subseteq M=N_M(Q_T)K$, we can write $y=xk$ with $x\in N_M(Q_T)$ and $k\in K$ (where the product $xk$ is formed in $M=N_{\L^+}(R)$). Then we have $P^x\leq Q_T$ and $(P^x)^k=P^y\leq Q_T$. As $R=R^x<P^x$, it follows from \ref{KinKernel} that $k\in\N$ and thus $k\alpha=\ov{\One}$. Note also that $(x,k)\in\D=\D_\Delta$ via $P$ and so $y=\Pi(x,k)$. Hence, as $\alpha$ is a homomorphism of partial groups, we have $y\alpha=\ov{\Pi}(x\alpha,k\alpha)=\ov{\Pi}(x\alpha,\ov{\One})=x\alpha$. Notice that $y\pi=x\pi$ and $y\pi\mu=x\pi\mu=xN_K(Q_T)$. So, by definition of $\alpha_R$, we have $y\alpha_R=(xN_K(Q_T))\ov{\alpha}=x\alpha$. Hence $y\alpha=y\alpha_R$ and the assertion holds. 
\end{proof}

We are now in a position to reach the final contradiction: Notice that $R\alpha=\ov{R}=\{\ov{\One}\}\in\ov{\Delta}$ and $\alpha_R$ is a group homomorphism from $M=N_{\L^+}(R)$ to $\ov{\L}=N_{\ov{\L}}(R\alpha)$. As $R$ is fully $\F$-normalized and $\F_S(\L^+)=\F|_{\Delta^+}\subseteq\F$, the subgroup $R$ is also fully $\F_S(\L^+)$-normalized. Hence, by \ref{Gamma0} and  \ref{AmalgamGetHomApply}, the hypothesis of \cite[Lemma~3.1]{Henke:2020} is fulfilled with $(\ov{\L},\ov{\Delta},\ov{S})$ and $\{R\}$ in place of $(\widetilde{\L},\widetilde{\Delta},\widetilde{S})$ and $\Gamma_0$. So by this Lemma, there exists a homomorphism of partial group $\gamma\colon\L^+\rightarrow\ov{\L}$ with $\gamma|_\L=\alpha$ and $\gamma|_M=\alpha_R$. Now
\[\N^+:=\ker(\gamma)\]
is a partial normal subgroup of $\L^+$ by \cite[Lemma~3.3]{Chermak:2013}. Moreover, we have  
\[\N^+\cap\L=\ker(\gamma|_\L)=\ker(\alpha)=\N\]
and
\begin{equation}\label{E:NNplusRK}
N_{\N^+}(R)=M\cap\N^+=\ker(\alpha_R)=K.
\end{equation}
In particular $\N^+\cap S=(\N^+\cap\L)\cap S=\N\cap S=T$. To obtain a contradiction it is thus enough to show that $\F_T(\N^+)=\E|_{\Gamma^+}$.

\smallskip

For any $Q\in\Gamma^+$, set $\Aut_{\N^+}(Q):=\{c_n|_Q\colon n\in N_{\N^+}(Q)\}$. As $K$ is by assumption a model for $N_\E(R)$, it follows from \eqref{E:NNplusRK} that 
\begin{equation*}
\Aut_{\N^+}(R)=\Aut_K(R)=\Aut_\E(R).
\end{equation*}
Let $R^*\in R^\F$. By \ref{NSPDelta}, $N_S(R^*)\in\Delta$. As $R$ is fully normalized, by \cite[Lemma~2.6(c)]{Aschbacher/Kessar/Oliver:2011}, there exists a morphism in $\Hom_\F(N_S(R^*),S)$ taking $R^*$ to $R$. Hence,  by Lemma~\ref{L:LocalitiesPropg}(a), we may pick $g\in\L$ with $N_S(R^*)\leq S_g$ and $(R^*)^g=R$. It follows now from Lemma~\ref{LocalitiesProp}(b) applied with $(\L^+,\Delta^+,\N^+,R^*,R)$ in place of $(\L,\Delta,\N,P,Q)$ that 
\begin{equation*}
(c_g|_{R^*})^{-1}\Aut_{\N^+}(R^*)(c_g|_{R^*})=\Aut_{\N^+}(R)=\Aut_\E(R).
\end{equation*}
As $\E\unlhd\F$, this implies
\begin{equation}\label{E:AutR}
\Aut_{\N^+}(R^*)=\Aut_\E(R^*)\mbox{ for all }R^*\in R^\F.
\end{equation}
Recall now that $\N\unlhd\L$ is chosen such that $\E|_\Gamma=\F_T(\N)$. Moreover, if $Q\in\Gamma\subseteq\Delta$, then $N_{\N^+}(Q)=N_\N(Q)$ and $\Aut_{\N^+}(Q)=\{c_n|_Q\colon n\in N_\N(Q)\}$. So using Lemma~\ref{LocalitiesProp}(c), one sees that
\begin{equation}\label{E:AutQ}
\Aut_{\N^+}(Q)=\Aut_\E(Q)\mbox{ for every }Q\in\Gamma.
\end{equation}
By Alperin's fusion theorem for fusion systems \cite[Theorem~I.3.6]{Aschbacher/Kessar/Oliver:2011}, every morphism in $\E$ is the product of restrictions of $\E$-automorphisms of subgroups of $T$. As $\Gamma^+$ is overgroup-closed in $T$ and closed under $\E$-conjugacy by \ref{Gamma00}, this implies that $\E|_{\Gamma^+}$ is generated by the automorphism groups $\Aut_\E(Q)$ where  $Q\in \Gamma^+=\Gamma\cup R^\F$. Hence, $\E|_{\Gamma^+}\subseteq \F_T(\N^+)$ by \eqref{E:AutR} and \eqref{E:AutQ}. 

\smallskip

By Lemma~\ref{GammaDeltaLemma}, $(\N^+,\Gamma^+,T)$ is a locality. So by Alperin's fusion theorem for localities \cite{Molinier:2016}, every element $m\in\N^+$ can be written as a product $m=\Pi(n_1,\dots,n_k)$ of elements $n_i\in N_{\N^+}(Q_i)$ with $Q_i\in\Gamma^+$ ($1\leq i\leq k$) such that $S_m=S_{(n_1,\dots,n_k)}$. Hence, $\F_T(\N^+)$ is generated by the groups $\Aut_{\N^+}(Q)$ with $Q\in\Gamma^+$. So by \eqref{E:AutR} and \eqref{E:AutQ}, we have also $\F_T(\N^+)\subseteq \E|_{\Gamma^+}$ and thus $\F_T(\N^+)=\E|_{\Gamma^+}$. Since this contradicts the choice of $(\L^+,\Delta^+,S)$, the proof of Theorem~\ref{mainStepI} is thereby complete.

\subsection{A special case of the one-to-one correspondence}\label{StepIISection}

In this section, we show that Theorem~\ref{main} holds if $\Delta$ is, in a certain sense, large enough. More precisely, we prove the following theorem and corollary using Theorem~\ref{mainStepI}.

\begin{theorem}\label{mainStepII}
Let $\F$ be a saturated fusion system over $S$, and let $\E$ be a normal subsystem of $\F$ over $T\leq S$. Suppose $(\L,\Delta,S)$ is a proper locality over $\F$ such that 
\begin{equation}\label{star}\tag{$*$}
 P_1P_2O_p(\F)\in\Delta\mbox{ for all }P_1\in\E^{cr},\;P_2\in C_\F(\E)^{cr}.
\end{equation}
Then there exists a unique partial normal subgroup $\N$ of $\L$ such that $T=S\cap\N$ and $\F_T(\N)=\E$.
\end{theorem}

\begin{cor}\label{C:StepII}
Let $(\L,\Delta,S)$ be a proper locality over a fusion system $\F$ such that  
\[P\in\Delta\mbox{ for all }P\in\F^q\mbox{ with }O_p(\F)\leq P.\]
Let $\E$ be a normal subsystem of $\F$ over a subgroup $T\leq S$. Then there exists a unique partial normal subgroup $\N$ of $\L$ such that $T=S\cap\N$ and $\F_T(\N)=\E$.
\end{cor}

\begin{proof}[Proof of Corollary~\ref{C:StepII} from Theorem~\ref{mainStepII}]
Assume that $P\in\Delta$ for every $P\in\F^q$ with $O_p(\F)\leq P$. It is a consequence of Lemma~\ref{L:tE}(b),(c) that $P_1P_2\in\F^q$ for all $P_1\in\E^{cr}$ and $P_2\in C_\F(\E)^{cr}$. As $\F^q$ is overgroup-closed, it follows thus from our assumption that \eqref{star} holds. Hence the assertion follows from Theorem~\ref{mainStepII}.
\end{proof}

So it remains to prove Theorem~\ref{mainStepII}. For the remainder of this subsection we assume the hypothesis of Theorem~\ref{mainStepII}. To ease notation we set 
\[\F_1:=\m{E},\;S_1:=T,\;\F_2:=C_\F(\m{E})\mbox{ and }S_2=C_S(\m{E}).\]

\smallskip

For $i=1,2$, the fusion system $\F_i$ is a normal subsystem of $\F$ over $S_i$. By Lemma~\ref{L:tE}, $\F_1$ and $\F_2$ centralize each other and 
\[\tE:=\F_1*\F_2\] is a normal subsystem of $\F$ over $\tT:=S_1S_2$. Suppose now there exists a proper locality  $(\L,\Delta,S)$ over $\F$. Notice that $TC_S(\E)O_p(\L)\in\Delta$ by \eqref{star} and $C_S(\E)\leq C_S(T)$. As $\Delta$ is overgroup-closed in $S$, it follows $TC_S(T)O_p(\L)\in\Delta$. Thus, if a partial normal subgroup $\N$ of $\L$ with $T=\N\cap S$ and $\E=\F_T(\N)$ exists, then it is unique by Corollary~\ref{C:UniquenessLemmaStrong}. Hence it remains only to prove the existence of $\N$.

\smallskip

By \cite[Proposition~5]{Henke:2015}, we have $\L=N_\L(O_p(\F))$. Hence, setting $\tDelta:=\{P\leq S\colon PO_p(\F)\in\Delta\}$, it follows from Lemma~\ref{L:VaryObjects} that $(\L,\tDelta,S)$ is a proper locality over $\F$. By \eqref{star}, we have $P_1P_2\in\tDelta$ for all $P_1\in\F_1^{cr}$ and all $P_2\in\F_2^{cr}$. Hence, replacing $\Delta$ by $\tDelta$, we can and will assume from now on that
\begin{equation}\label{star2}\tag{$**$}
 P_1P_2\in\Delta\mbox{ for all }P_1\in\F_1^{cr},\;P_2\in \F_2^{cr}.
\end{equation}
By Lemma~\ref{L:tE}(b), this is equivalent to assuming that $\tE^{cr}\subseteq\Delta$. Write $\Delta_0$ for the set of subgroups of $S$ containing an element of $\tE^{cr}$.

\begin{property}\label{L:Delta0Delta}
We have $\F^{cr}\subseteq \Delta_0\subseteq\Delta$ and $\Delta_0$ is $\F$-closed. 
\end{property}

\begin{proof}
Since $\tE^{cr}$ is closed under $\F$-conjugacy by Lemma~\ref{L:EcFinvariant}, the set $\Delta_0$ is closed under $\F$-conjugacy. By construction, $\Delta_0$ is also overgroup-closed in $S$. Notice that $\Delta_0\subseteq\Delta$ as $\Delta$ is overgroup-closed and  $\tE^{cr}\subseteq\Delta$ by \eqref{star2} and Lemma~\ref{L:tE}(b). For every $R\in\F^{cr}$, \cite[Lemma~1.20(d)]{AOV1} gives $R\cap \tT\in \tE^{cr}$ and thus $R\in\Delta_0$ by construction of $\Delta_0$. 
\end{proof}

By \ref{L:Delta0Delta}, the restriction
\[\L_0:=\L|_{\Delta_0}\]
is well-defined and so $(\L_0,\Delta_0,S)$ is a locality of objective characteristic $p$ with $\F^{cr}\subseteq \Delta_0$. Alperin's fusion theorem \cite[Theorem~I.3.6]{Aschbacher/Kessar/Oliver:2011} implies that $\F_S(\L_0)=\F_S(\L)=\F$. So $(\L_0,\Delta_0,S)$ is a proper locality over $\F$.

\smallskip

If there exists a partial normal subgroup $\N_0$ of $\L_0$ with $T=\N_0\cap S$ and $\E=\F_T(\N_0)$ then, as $\E=\F_T(\N_0)$ is normal in $\F$, it follows from Theorem~\ref{T:VaryObjects}(b),(c) that there exists $\N\unlhd\L$ with $\N\cap \L_0=\N_0$, $T=\N\cap S$ and $\E=\F_T(\N)$. So replacing $(\L,\Delta,S)$ by $(\L_0,\Delta_0,S)$, we may assume from now on that $\Delta=\Delta_0$. Then the following property holds.

\begin{property}\label{DeltaOvergroupstEcr}
The set $\Delta$ is the set of overgroups of the elements of $\tE^{cr}$ in $S$. In particular,
\[\Gamma:=\{P\cap \tT\colon P\in\Delta\}\subseteq\Delta\]
is the set of overgroups of the elements of $\tE^{cr}$ in $\tT$. 
\end{property}

By \ref{DeltaOvergroupstEcr} the hypothesis of Theorem~\ref{mainStepI} is fulfilled with $\tE$ in place of $\E$. Hence, it follow from that theorem that there exists a partial normal subgroup $\M$ of $\L$ such that $\M\cap S=\tT$ and $\F_{\tT}(\M)=\tE|_\Gamma$. As $\tE^{cr}\subseteq\Gamma$, Alperin's fusion theorem \cite[Theorem~I.3.6]{Aschbacher/Kessar/Oliver:2011} yields that $\tE=\tE|_\Gamma=\F_{\tT}(\M)$. Hence, if follows from Lemma~\ref{GammaDeltaLemma}(b),(c) that $(\M,\Gamma,\tT)$ is a proper locality over $\tE$. 

\smallskip

For each $i=1,2$, write $\Delta_i$ for the set of overgroups of $\F_i^{cr}$ in $S_i$. Notice that $\Delta_i$ is $\F_i$-closed for $i=1,2$. 
As $\F_i^{cr}\subseteq\F_i^s$ and $\F_i^s$ is $\F_i$-closed by \cite[Proposition~3.3]{Henke:2015}, we have $\Delta_i\subseteq\F_i^s$. 
Observe also that, by Lemma~\ref{L:tE}(b) and \ref{DeltaOvergroupstEcr}, $\Gamma$ is the set of overgroups in $\tT$ of the subgroups of the form $P_1P_2$ with $P_i\in\Delta_i$ for $i=1,2$. It follows from Lemma~\ref{L:InternalExternalCentralProduct} that $\tE$ is an internal central product of $\E$ and $C_\F(\E)$ not only in the definition given in this paper, but also in the sense of \cite[Definition~2.9]{Henke:2016}. So by \cite[Proposition~6.12(b)]{Henke:2016} applied with $(\tE,\M,\Gamma,\tT)$ in place of $(\F,\L,\Delta,S)$, the locality $(\M,\Gamma,\tT)$ is (in the sense of \cite[Definition~6.6]{Henke:2016}) an internal central product of two sublocalities $(\L_1,\Delta_1,S_1)$ and $(\L_2,\Delta_2,S_2)$ of $(\M,\Gamma,\tT)$ such that $(\L_i,\Delta_i,S_i)$ is a proper locality over $\F_i$ for $i=1,2$. 

\smallskip

Saying that $(\L_i,\Delta_i,S_i)$ is a sublocality of $(\M,\Gamma,\tT)$ means here (according to \cite[Definition~3.15]{Henke:2016}) that $\L_i$ is a partial subgroup of $\M$ (and thus of $\L$), that $\tT\cap\L_i=S_i$ (and thus $S\cap\L_i=S_i$), and that $(\L_i,\Delta_i,S_i)$ forms a locality. 

\smallskip

We will not need the precise definition of what it means that $(\M,\Gamma,\tT)$ is an internal central product of $(\L_1,\Delta_1,S_1)$ and $(\L_2,\Delta_2,S_2)$, but we will only use that $\L_i\unlhd \M$ for $i=1,2$ by \cite[Lemma~6.10]{Henke:2016}. More precisely, we use that 
\[\N:=\L_1\unlhd\M.\] 
Note that 
\[T=S_1=\L_1\cap S=\N\cap S\mbox{ and }\F_T(\N)=\m{E}\] 
as $(\N,\Delta_1,T)=(\L_1,\Delta_1,S_1)$ is a locality over $\F_1=\E$. So it remains only to show that $\N\unlhd \L$. Let $f\in N_\L(\tT)$. As $\N\unlhd\M$, it is by \cite[Corollary~3.13]{Chermak:2015} enough to show that $n^f\in\N$ for every $n\in\N\cap\D(f)$. 

\smallskip

Observe that $\L=\D(f)$ and $c_f\in\Aut(\L)$ by \ref{DeltaOvergroupstEcr} and Lemma~\ref{GammaDeltaLemma}(a). In particular, as $\N\unlhd\M\unlhd\L$, it follows $\N^f\unlhd\M^f=\M$. Moreover, $c_f$ induces an isomorphism from $\N$ to $\N^f$. In particular, as $T$ is strongly closed and a maximal $p$-subgroup of $\N$, it follows that $T=T^f\leq S\cap \N^f$ is a maximal $p$-subgroup of $\N^f$. Hence  $T^f=T=\N^f\cap S$. Set $\Delta_1^f:=\{P^f\colon P\in\Delta_1\}$. As $(\N,\Delta_1,T)$ is a locality, the triple $(\N^f,\Delta_1^f,T)$ is a locality; this is a special case of \cite[Theorem~4.3]{Chermak:2015}, but can also easily be shown by direct arguments. Hence, by \cite[Lemma~2.21(b)]{Henke:2020} or by a direct argument, $c_f$ induces an isomorphism from $\E=\F_T(\N)$ to $\F_T(\N^f)$. Since $\alpha:=c_f|_T\in\Aut_\F(T)$ and $\E\unlhd\F$, this implies $\F_T(\N)=\E=\E^\alpha=\F_T(\N^f)$. As $(\M,\Gamma,T)$ is a proper locality and $TC_{\tT}(T)=\tT\in\Gamma$, it follows now from Corollary~\ref{C:UniquenessLemmaStrong} applied with $(\M,\Gamma,T)$ in place of $(\L,\Delta,S)$ that $\N^f=\N$. This shows the assertion.

\subsection{The general one-to-one correspondence}\label{S:StepIV}

In this section we will show that there is a natural one-to-one correspondence between the normal subsystems of $\F$ and the partial normal subgroups of an arbitrary proper locality over $\F$. In particular, parts (a) and (b) of Theorem~\ref{mainStepIV} below can be considered as a weak version of Theorem~\ref{main}. We will postpone the proof of the complete statement of Theorem~\ref{main}, since it uses results about components and $E(\F)$, which are known by work of Aschbacher \cite{Aschbacher:2011}, but which we want to reprove later on using Theorem~\ref{mainStepIV}. We moreover state some details which are not mentioned in our main theorems.

\smallskip

\textbf{Throughout this section, assume that $\F$ is a saturated fusion system over $S$. Write $\fN(\F)$ for the set of normal subsystems of  $\F$ and, given a partial group $\L$, write $\fN(\L)$ for the set of partial normal subgroups of $\L$.}

\smallskip

Given two proper localities $(\L,\Delta,S)$ and $(\L^+,\Delta^+,S)$ with $\Delta\subseteq\Delta^+$ and $\L=\L^+|_{\Delta}$, the map
\[\Phi_{\L^+,\L}\colon \fN(\L^+)\rightarrow \fN(\L),\;\N^+\mapsto \N^+\cap \L.\]
is well-defined and a bijection by Theorem~\ref{T:VaryObjects}(b). Against our usual convention we use the left-hand notation for the maps $\Phi_{\L^+,\L}$ and $\Psi_\L$ below. We write $\F^q_{O_p(\F)}$ for the set of all $P\in\F^q$ with $O_p(\F)\leq P$. Similarly, $\delta(\F)_{O_p(\F)}$ denotes the set of all $P\in\delta(\F)$ with $O_p(\F)\leq P$. The main results of this subsection are summarized in the following theorem.

\begin{theorem}\label{mainStepIV}
For every proper locality $(\L,\Delta,S)$ over $\F$, there is an inclusion-preserving bijection
\[\Psi_\L\colon \fN(\L)\rightarrow \fN(\F)\]
such that the following properties hold:
\begin{itemize}
 \item [(a)] For any $\N\in\fN(\L)$, the normal subsystem $\Psi_\L(\N)$ is a fusion system over $\N\cap S$. Furthermore, $\Psi_\L(\N)$ is the smallest normal subsystem of $\F$ containing $\F_{S\cap\N}(\N)$. 
 \item [(b)] If $\F^q_{O_p(\F)}\subseteq\Delta$ or $\delta(\F)_{O_p(\F)}\subseteq\Delta$, then $\Psi_\L(\N)=\F_{S\cap\N}(\N)$. 
 \item [(c)] If $(\L^+,\Delta^+,S)$ is another proper locality over $\F$ with $\Delta\subseteq \Delta^+$ and $\L=\L^+|_\Delta$, then $\Psi_{\L^+}=\Psi_\L\circ \Phi_{\L^+,\L}$.
 \item [(d)] If $\E_1,\E_2\in\fN(\F)$ with $\E_1\unlhd\E_2$, then $\Psi_\L^{-1}(\E_1)\subseteq\Psi_\L^{-1}(\E_2)$.
 \item [(e)] If $\N_1,\N_2\in\fN(\L)$ with $\N_1\subseteq\N_2$, then $\Psi_\L(\N_1)\unlhd\Psi_\L(\N_2)$.
 \item [(f)] If $R\leq S$ with $R\unlhd\L$, then $\Psi_\L(R)=\F_R(R)$.
 \item [(g)] Let $\alpha\in\Aut(\L,S)$ and $\N\in\fN(\L)$. Then $\alpha|_{\N\cap S}$ induces an isomorphism from $\Psi_\L(\N)$ to $\Psi_\L(\N\alpha)$. 
\end{itemize}
\end{theorem}

We caution the reader that the assumption that $\E_1\unlhd\E_2$ in Theorem~\ref{mainStepIV}(d) is actually important, since $\Psi_\L^{-1}$ need not be inclusion-preserving as the following example shows.

\begin{ex}\label{E:1}
Let $G=G_1\times G_2$ with $G_1\cong G_2\cong A_4$. Setting $T_i:=O_p(G_i)$ for $i=1,2$ and $S:=T_1\times T_2$, we have $S\in\Syl_p(G)$. Set $\F=\F_S(G)$ and set $\Delta:=\F^s$. Note that $S=O_p(G)\in\Delta$. As $G$ is of characteristic $p$, $\Delta$ is the set of all subgroups of $S$ and $(G,\Delta,S)$ is a proper locality over $\F$ (e.g. by Lemma~\ref{L:MSCharp}(b) or by a direct argument). The partial normal subgroups of this locality correspond to the normal subgroups of the group $G$. However there are two normal subgroups $M$ and $N$ of $G$ such that $\F_{S\cap M}(M)\subseteq \F_{S\cap N}(N)$ and $M\not\leq N$. Namely, for $i=1,2$ fix $d_i\in G_i$ of order $3$. Take $M:=G_1$ and $N:=S\<d_1d_2\>$.  
\end{ex}

The remainder of this section is devoted to the proof of Theorem~\ref{mainStepIV}. We will build on Theorem~\ref{T:VaryObjects}, Theorem~\ref{T:mainRegularPartialNormal}(c) and the following reformulation of Corollary~\ref{C:StepII}.

\begin{lemma}\label{N*}
Let $(\L,\Delta,S)$ be a proper locality over $\F$ with $\F^q_{O_p(\F)}\subseteq \Delta$. Then there exists a map \[\Theta\colon \fN(\F)\rightarrow \fN(\L),\;\m{E}\mapsto \Theta(\m{E})\]
such that $\m{E}=\F_{\Theta(\m{E})\cap S}(\Theta(\m{E}))$. Moreover, $\Theta$ is injective and so $|\fN(\F)|\leq |\fN(\L)|$. 
\end{lemma}

\begin{proof}
By Corollary~\ref{C:StepII}, for every $\m{E}\in\fN(\F)$, there exists a unique partial normal subgroup $\Theta(\m{E})\in\fN(\L)$ with $\m{E}=\F_{\Theta(\m{E})\cap S}(\Theta(\m{E}))$. This shows that the map $\Theta$ exists. If $\m{E},\m{E}'\in\fN(\F)$ with $\Theta(\m{E})=\Theta(\m{E}')$ then $\m{E}=\F_{\Theta(\m{E})\cap S}(\Theta(\m{E}))= \F_{\Theta(\m{E}')\cap S}(\Theta(\m{E}'))=\m{E}'$. So $\Theta$ is injective, which implies that $|\fN(\F)|\leq |\fN(\L)|$.
\end{proof}

\begin{lemma}\label{RegularNormalMap}
 Let $(\L,\Delta,S)$ be a proper locality over $\F$ with $\delta(\F)_{O_p(\F)}\subseteq \Delta$. Then the map
\[\Psi_\L\colon \fN(\L)\rightarrow \fN(\F),\;\N\mapsto \F_{S\cap \N}(\N)\]
is well-defined and a bijection. In particular, $|\fN(\L)|=|\fN(\F)|$. Moreover, if $\N\unlhd\L$ and $T=\N\cap S$, then $TC_S(T)O_p(\L)\in\Delta$.
\end{lemma}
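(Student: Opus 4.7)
The well-definedness of $\Psi_\L$ is immediate from Theorem~\ref{RegularNormal}: for any $\N \in \fN(\L)$ with $T := S \cap \N$, the fusion system $\F_T(\N)$ is normal in $\F$; the same theorem also gives $TC_S(T) \in \Delta$, a point that becomes crucial below.

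For injectivity, suppose $\N_1, \N_2 \in \fN(\L)$ satisfy $\Psi_\L(\N_1) = \Psi_\L(\N_2)$. Since $\Psi_\L(\N_i)$ is a fusion system over $S \cap \N_i$, the two intersections agree; call this group $T$. Then $\F_T(\N_1) = \F_T(\N_2)$, and because $\L$ is proper (so each $N_\L(P)$ has characteristic $p$) and $TC_S(T) \in \Delta$, Lemma~\ref{UniquenessLemma} forces $\N_1 = \N_2$. This gives $|\fN(\L)| \le |\fN(\F)|$.

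For surjectivity, the plan is to compare cardinalities by passing through the proper locality with object set $\F^s$. By \cite[Theorem~A]{Henke:2015} there is a proper locality $(\L^+, \F^s, S)$ over $\F$. Since $\F^{cr} \subseteq \delta(\F) \subseteq \F^s$, the restriction $\L^+|_{\delta(\F)}$ is itself a regular locality over $\F$. Invoking uniqueness of regular localities up to rigid isomorphism (from \cite{ChermakIII}), there is a rigid isomorphism $\L \to \L^+|_{\delta(\F)}$, which by Lemma~\ref{RigidIsoLemma} induces a bijection $\fN(\L) \to \fN(\L^+|_{\delta(\F)})$ preserving the associated fusion systems (so that the two versions of $\Psi$ can be identified). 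Theorem~\ref{PartIISummarize} applied with $\Delta^+ = \F^s$ and $\Delta = \delta(\F)$ yields a further bijection $\fN(\L^+) \to \fN(\L^+|_{\delta(\F)})$, whence $|\fN(\L^+)| = |\fN(\L)|$. Combined with the injection $\fN(\F) \hookrightarrow \fN(\L^+)$ from Theorem~\ref{N*}, this gives $|\fN(\F)| \le |\fN(\L)|$. Since $\fN(\F)$ is finite (as $S$ is a finite $p$-group), together with the reverse inequality from injectivity we conclude that $\Psi_\L$ is a bijection.

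The main obstacle is the identification $\L \cong \L^+|_{\delta(\F)}$ via rigid isomorphism, which must be imported from the uniqueness theorem for regular localities in \cite{ChermakIII}; once this is granted, the proof is a mostly mechanical assembly of Theorems~\ref{RegularNormal}, \ref{PartIISummarize}, \ref{N*} and Lemmas~\ref{UniquenessLemma}, \ref{RigidIsoLemma}.
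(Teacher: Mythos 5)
Your proof is correct and follows essentially the same route as the paper: well-definedness via Theorem~\ref{RegularNormal}, injectivity via $TC_S(T)\in\Delta$ and Lemma~\ref{UniquenessLemma}, and the reverse cardinality bound by comparing with a proper locality over $\F^s$ through Theorem~\ref{PartIISummarize} and Theorem~\ref{N*}. The only (harmless) deviation is at the comparison step: the paper cites \cite[Theorem~7.2(a)]{Henke:2015} to produce $(\L^+,\F^s,S)$ with $\L^+|_{\delta(\F)}=\L$ on the nose, whereas you take an arbitrary subcentric proper locality and identify $\L$ with its restriction to $\delta(\F)$ via uniqueness up to rigid isomorphism together with Lemma~\ref{RigidIsoLemma}; both mechanisms are available in the cited literature and yield the same count.
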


\begin{proof}
We will use throughout that $\L=N_\L(O_p(\F))$ and $O_p(\F)=O_p(\L)$ by \cite[Proposition~5]{Henke:2015} and Lemma~\ref{L:OpL}. Set $\L_\delta:=\L|_{\delta(\F)_{O_p(\F)}}$ so that $(\L_\delta,\delta(\F)_{O_p(\F)},S)$ is a proper locality. It follows from \cite[Lemma~10.6]{Henke:Regular} that $\delta(\F)=\{P\leq S\colon PO_p(\F)\in\delta(\F)_{O_p(\F)}\}$. Hence, by Lemma~\ref{L:VaryObjects}, the triple $(\L_\delta,\delta(\F),S)$ is a proper locality and thus a regular locality.

\smallskip

Let $\N\in\fN(\L)$ and set $T:=S\cap \N$. Then $\N_\delta=\N\cap \L_\delta\unlhd\L_\delta$ and $\N_\delta\cap S=T$.  Theorem~\ref{T:mainRegularPartialNormal}(c) yields $TC_S(T)\in\delta(\F)$ and $\F_T(\N_\delta)\in\fN(\F)$. In particular $\F_T(\N_\delta)$ is $\F$-invariant. By Theorem~\ref{T:VaryObjects}(c) this implies $\F_T(\N)=\F_T(\N_\delta)\in\fN(\F)$. Thus, $\Psi_\L$ is well-defined. As $TC_S(T)\in\delta(\F)$, we have  $TC_S(T)O_p(\L)\in\delta(\F)_{O_p(\F)}\subseteq\Delta$. Hence, it follows from Corollary~\ref{C:UniquenessLemmaStrong} that $\Psi_\L$ is injective. In particular, $|\fN(\L)|\leq |\fN(\F)|$.

\smallskip

By Theorem~\ref{T:VaryObjects}(d), there exists a proper locality $(\L^s,\F^s,S)$ over $\F$ with $\L^s|_\Delta=\L$. Then Theorem~\ref{T:VaryObjects}(b) implies $|\fN(\L^s)|=|\fN(\L)|$ and  Lemma~\ref{N*} shows $|\fN(\F)|\leq |\fN(\L^s)|$. So $|\fN(\F)|\leq |\fN(\L^s)|=|\fN(\L)|\leq |\fN(\F)|$. Thus, all the inequalities are equalities and $\Psi_\L$ is a bijection. 
\end{proof}

\begin{lemma}\label{BijectionFsFqdelta}
 Let $(\L,\Delta,S)$ be a proper locality over $\F$ such that $\F^q_{O_p(\F)}\subseteq\Delta$ or $\delta(\F)_{O_p(\F)}\subseteq\Delta$. Define
\[\Psi_\L\colon \fN(\L)\rightarrow \fN(\F),\;\N\mapsto \F_{S\cap \N}(\N)\]
Then the following hold:
\begin{itemize}
\item [(a)] The map $\Psi_\L$ is well-defined and an inclusion-preserving bijection. In particular, $\F_{S\cap\N}(\N)$ is a normal subsystem of $\F$ for every partial normal subgroup $\N$ of $\L$.
\item [(b)] If $\E_1,\E_2\in\fN(\F)$ with $\E_1\unlhd \E_2$ then $\Psi_\L^{-1}(\E_1)\subseteq \Psi_\L^{-1}(\E_2)$.
\end{itemize}
\end{lemma}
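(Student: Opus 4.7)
The statement splits naturally into the three choices of $\Delta$, and for part~(a) the main content is well-definedness of $\Psi_\L$, i.e.\ that $\F_{S\cap\N}(\N)$ is always normal in $\F$.

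\textbf{Part~(a), the case $\Delta=\delta(\F)$.} This is exactly Lemma~\ref{RegularNormalMap}: that result gives both that $\F_{S\cap\N}(\N)$ is normal in $\F$ and that $\Psi_\L$ is a bijection.

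\textbf{Part~(a), the cases $\Delta=\F^q$ and $\Delta=\F^s$.} The plan is to combine Theorem~\ref{N*} with Corollary~\ref{OrderSame}. Theorem~\ref{N*} supplies an injective map $\N^*\colon \fN(\F)\to\fN(\L)$ satisfying $\F_{\N^*(\m{E})\cap S}(\N^*(\m{E}))=\m{E}$ for every $\m{E}\in\fN(\F)$. Corollary~\ref{OrderSame} gives $|\fN(\L)|=|\fN(\F)|$, so $\N^*$ is a bijection. The defining identity now reads $\Psi_\L\circ\N^*=\mathrm{id}_{\fN(\F)}$, which forces $\Psi_\L=(\N^*)^{-1}$. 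Hence $\Psi_\L$ is a bijection $\fN(\L)\to\fN(\F)$, and in particular $\F_{S\cap\N}(\N)\in\fN(\F)$ for every $\N\in\fN(\L)$. That $\Psi_\L$ itself is inclusion-preserving is immediate: if $\N_1\subseteq\N_2$ then $S\cap\N_1\subseteq S\cap\N_2$ and every generating conjugation morphism of $\F_{S\cap\N_1}(\N_1)$ is realised by an element of $\N_2$, so it belongs to $\F_{S\cap\N_2}(\N_2)$.

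\textbf{Part~(b).} Assume now $\Delta=\F^s$, and let $\m{E}_1\subseteq\m{E}_2$ be normal subsystems of $\F$ over $T_1$ and $T_2$ respectively; then $T_1\leq T_2$. Set $\N_i:=\Psi_\L^{-1}(\m{E}_i)$, so that $\N_i\cap S=T_i$ and $\F_{T_i}(\N_i)=\m{E}_i$. The plan is to show $\N_1\N_2=\N_2$ via Lemma~\ref{UniquenessLemma}; this forces $\N_1\subseteq\N_1\N_2=\N_2$. By \cite[Theorem~1]{Henke:2015a}, $\N_1\N_2$ is a partial normal subgroup of $\L$ with $(\N_1\N_2)\cap S=T_1T_2=T_2$. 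Writing a typical $f\in\N_1\N_2$ as $f=mn$ with $m\in\N_1$, $n\in\N_2$ and $S_f=S_{(m,n)}$ (exactly as in the proof of Lemma~\ref{UniquenessLemma}), the morphism $c_f|_{S_f\cap T_2}$ decomposes as the composition of $c_m|_{S_f\cap T_2}\in\Hom_{\m{E}_1}$ and $c_n|_{S_f^m\cap T_2}\in\Hom_{\m{E}_2}$, both of which lie in $\m{E}_2$ because $\m{E}_1\subseteq\m{E}_2$. Hence $\F_{T_2}(\N_1\N_2)\subseteq\m{E}_2=\F_{T_2}(\N_2)\subseteq\F_{T_2}(\N_1\N_2)$, so the two fusion systems agree. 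By Lemma~\ref{FsStar}, $T_2C_S(\m{E}_2)\in\F^s$; since $C_S(T_2)\geq C_S(\m{E}_2)$ and $\F^s$ is closed under overgroups in $S$, we conclude $T_2C_S(T_2)\in\F^s=\Delta$. All hypotheses of Lemma~\ref{UniquenessLemma} are now satisfied, yielding $\N_1\N_2=\N_2$ as required.

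\textbf{Main obstacle.} The restriction to $\Delta=\F^s$ in part~(b) is dictated by the need for $T_2C_S(T_2)\in\Delta$ so as to apply the uniqueness of Lemma~\ref{UniquenessLemma}; this membership is precisely what Lemma~\ref{FsStar} furnishes for $\F^s$, whereas for $\F^q$ or $\delta(\F)$ the analogous statement would require additional argument. Otherwise the proof is a direct synthesis of Theorem~\ref{N*}, Corollary~\ref{OrderSame}, Lemma~\ref{RegularNormalMap}, the product theorem for partial normal subgroups, and Lemma~\ref{UniquenessLemma}.
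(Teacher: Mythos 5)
Part (a) of your proposal is exactly the paper's argument: Lemma~\ref{RegularNormalMap} handles $\delta(\F)$, and for $\F^q$ and $\F^s$ the paper likewise combines Theorem~\ref{N*} with Corollary~\ref{OrderSame} to see that $\N^*$ is a bijection whose inverse must be $\Psi_\L$. No issues there.

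In part (b), however, there is a genuine gap. You assert that $c_m|_{S_f\cap T_2}\in\Hom_{\m{E}_1}$ and that it therefore lies in $\m{E}_2$ ``because $\m{E}_1\subseteq\m{E}_2$''. But $\m{E}_1$ is a fusion system over $T_1=\N_1\cap S$, so its morphisms have source and target inside $T_1$, whereas $S_f\cap T_2$ is a subgroup of $T_2$ that in general is not contained in $T_1$; thus $c_m|_{S_f\cap T_2}$ is not a morphism of $\m{E}_1$ at all. Nor is it obviously a morphism of $\m{E}_2$: that would essentially require $m\in\N_2$, which is (up to the product argument) precisely what you are trying to prove. All one can say directly is that its image lies in $T_2$ (strong closure of $T_2$), so it is a morphism of $\F_{T_2}(\N_1T_2)$. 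The paper closes exactly this gap with Lemma~\ref{ProductWithPSubgroup}: since $\m{E}_1=\F_{T_1}(\N_1)$ is normal in $\F$ and $\Delta=\F^s$, one has $\F_{T_2}(\N_1T_2)=\m{E}_1T_2$ (the product subsystem constructed in \cite{Henke:2013}), and $\m{E}_1T_2\subseteq\m{E}_2$ since $\m{E}_1\subseteq\m{E}_2$ and $T_2$ is the Sylow group of $\m{E}_2$. With that ingredient supplied, the remainder of your part (b) --- computing $\F_{T_2}(\N_1\N_2)=\m{E}_2=\F_{T_2}(\N_2)$, checking $T_2C_S(T_2)\in\Delta$, and applying Lemma~\ref{UniquenessLemma} to get $\N_1\N_2=\N_2$ and hence $\N_1\subseteq\N_2$ --- is the paper's proof verbatim; your detour to $T_2C_S(T_2)\in\F^s$ via Lemma~\ref{FsStar} and overgroup closure works, though the paper gets it more cheaply from $T_2C_S(T_2)\in\F^c\subseteq\F^s$. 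So the proposal is not complete as written: it omits the one nontrivial external input (Lemma~\ref{ProductWithPSubgroup}) that makes the decomposition argument legitimate.
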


\begin{proof}
\textbf{(a)} Clearly $\Psi_\L$ is inclusion-preserving if it is well-defined. Hence, if $\delta(\F)_{O_p(\F)}\subseteq\Delta$, then (a) follows from Lemma~\ref{RegularNormalMap}. So for the proof of (a) we may assume  $\F^q_{O_p(\F)}\subseteq\Delta$ and only need to prove that $\Psi_\L$ is a well-defined bijection. By Theorem~\ref{T:VaryObjects}(d), there exists a proper locality $(\L^s,\F^s,S)$ over $\F$ with $\L^s|_\Delta$ and by Theorem~\ref{T:VaryObjects}(b), we have $|\fN(\L)|=|\fN(\L^s)|$. Hence, Lemma~\ref{RegularNormalMap} implies $|\fN(\L)|=|\fN(\L^s)|=|\fN(\F)|$. Thus, the injective map $\Theta\colon \fN(\F)\rightarrow \fN(\L)$ from Lemma~\ref{N*} is a bijection. As $\m{E}=\F_{\Theta(\m{E})\cap S}(\Theta(\m{E}))$ for every $\m{E}\in\fN(\F)$, the inverse of $\Theta$ must be the map $\Psi_\L$, which is thus in particular well-defined and a bijection. This proves (a). 

\smallskip

\textbf{(b)} Part (b) follows now from Lemma~\ref{L:E1unlhdE2Help} provided we can show that $TC_S(T)O_p(\L)\in\Delta$ for every $\N\unlhd\L$ and $T=\N\cap S$. In the case $\delta(\F)_{O_p(\F)}\subseteq \Delta$, this property was shown in Lemma~\ref{RegularNormalMap}. As $T$ is strongly closed, $TC_S(T)O_p(\L)\in\F^c\subseteq \F^q$. By \cite[Proposition~5]{Henke:2015} and Lemma~\ref{L:OpL}, we have $O_p(\F)=O_p(\L)$. Hence, if $\F^q_{O_p(\F)}\subseteq\Delta$, then also $TC_S(T)O_p(\L)\in\Delta$.
\end{proof}

We show now the following preliminary version of Theorem~\ref{mainStepIV}.

\begin{prop}\label{P:MainStepIVPrelim}
For every proper locality $(\L,\Delta,S)$ over $\F$, there exists a proper locality $(\L^s,\F^s,S)$ over $\F$ such that $\L^s|_\Delta=\L$. Given such $(\L,\Delta,S)$ and $(\L^s,\F^s,S)$, the map
\[\Psi_{\L^s}\colon \fN(\L^s)\longrightarrow \fN(\F),\;\N^s\mapsto \F_{S\cap\N^s}(\N^s)\]
is a well-defined bijection, and the map $\Psi_\L:=\Psi_{\L^s}\circ \Phi_{\L^s,\L}^{-1}$ is a well-defined inclusion-preserving  bijection  $\fN(\L)\rightarrow \fN(\F)$. Moreover, the following hold:
\begin{itemize}
 \item [(a)] For any $\N\in\fN(\L)$, the normal subsystem $\Psi_\L(\N)$ is a fusion system over $S\cap\N$. Furthermore, $\Psi_\L(\N)$ is the smallest normal subsystem of $\F$ containing $\F_{S\cap\N}(\N)$. In particular, the map $\Psi_\L$ is characterized by this property and thus independent of the choice of $\L^s$. 
 \item [(b)] If $\F^q_{O_p(\F)}\subseteq\Delta$ or $\delta(\F)_{O_p(\F)}\subseteq\Delta$, then $\Psi_\L(\N)=\F_T(\N)$ for every $\N\in\fN(\L)$. 
 \item [(c)] If $\E_1,\E_2\in\fN(\F)$ with $\E_1\unlhd\E_2$, then $\Psi_\L^{-1}(\E_1)\subseteq\Psi_\L^{-1}(\E_2)$.
\end{itemize}  
\end{prop}

\begin{proof}
Given a proper locality $(\L,\Delta,S)$, a proper locality $(\L^s,\F^s,S)$ over $\F$ with $\L^s|_\Delta=\L$ exists by Theorem~\ref{T:VaryObjects}(d). The map $\Psi_{\L^s}$ is a well-defined inclusion-preserving bijection by Lemma~\ref{BijectionFsFqdelta}(a). Moreover, according to Theorem~\ref{T:VaryObjects}(b), $\Phi_{\L^s,\L}$ is an inclusion-preserving bijection such that $\Phi_{\L^s,\L}^{-1}$ is also inclusion-preserving. In particular, $\Psi_\L:=\Psi_{\L^s}\circ \Phi_{\L^s,\L}^{-1}$ is a well-defined inclusion-preserving bijection. 

\smallskip

\textbf{(c)} By Lemma~\ref{BijectionFsFqdelta}, we have $\Psi_{\L^s}^{-1}(\E_1)\subseteq\Psi_{\L^s}^{-1}(\E_2)$ for any $\E_1,\E_2\in\fN(\F)$ with $\E_1\unlhd\E_2$. So (c) follows from the fact that $\Phi_{\L^s,\L}$ is inclusion-preserving. 

\smallskip

\textbf{(a,b)} Note that (b) follows from (a) and Lemma~\ref{BijectionFsFqdelta}(a). Hence, it remains only to prove (a). Let $\N\unlhd\L$, $T:=\N\cap S$ and $\N^s:=\Phi_{\L^s,\L}^{-1}(\N)\unlhd\L^s$. Then $\N^s\cap\L=\N$ and so $\N^s\cap S=T$. In particular,  $\E:=\Psi_\L(\N)=\F_T(\N^s)$ is a normal subsystem of $\F$ over $T$. Let now $\tE$ be a normal subsystem of $\F$ with $\F_T(\N)\subseteq \tE$. We need to show that $\E\subseteq\tE$. 

\smallskip

Note that $\tN^s:=\Psi_{\L^s}^{-1}(\tE)$ is a partial normal subgroup of $\L^s$ with $\F_{\tN^s\cap S}(\tN^s)=\tE$. It follows that $T\leq \tN^s\cap S$. In particular,
\[\M:=\tN^s\cap \N^s\]
is a partial normal subgroup of $\L^s$ with $\M\cap S=T$. As $\Psi_{\L^s}$ is an inclusion-preserving bijection, it follows that $\m{D}:=\F_T(\M)$ is normal in $\F$, $\m{D}\subseteq\F_T(\N^s)=\E$ and $\m{D}\subseteq\F_{S\cap\tN^s}(\tN^s)=\tE$. In particular, $\m{D}$ is $\F$-invariant. From the equivalent condition for $\F$-invariance given in \cite[Proposition~I.6.4(d)]{Aschbacher/Kessar/Oliver:2011}, it follows thus that $\m{D}$ is $\E$-invariant. So from the Frattini condition (stated in \cite[Definition~I.6.1]{Aschbacher/Kessar/Oliver:2011}), we can conclude that $\m{E}=\<\m{D},\Aut_{\m{E}}(T)\>$. 

\smallskip

As $\E=\F_T(\N^s)$, the group $\Aut_\E(T)$ is generated by the automorphisms of $T$ which are obtained by conjugation by elements $N_{\N^s}(T)$. Moreover, by \cite[Lemma~3.34]{Henke:Regular}, we have $N_{\N^s}(T)=N_\N(T)$. Hence, $\Aut_{\m{E}}(T)\subseteq\F_T(\N)\subseteq\tE$ and so $\m{E}=\<\m{D},\Aut_{\m{E}}(T)\>\subseteq\tE$. This proves the assertion.
\end{proof}

\begin{proof}[Proof of Theorem~\ref{mainStepIV}]
\textbf{(a,b,c,d)} By Proposition~\ref{P:MainStepIVPrelim}, there exists a unique inclusion-preserving bijection $\Psi_\L\colon\fN(\N)\rightarrow\fN(\F)$ such that properties (a),(b),(d) of Theorem~\ref{mainStepIV} hold; moreover, if $(\L^s,\F^s,S)$ is a proper locality over $\F$ with $\L^s|_\Delta=\L$, then $\Psi_\L=\Psi_{\L^s}\circ\Phi_{\L^s,\L}^{-1}$. If $(\L^+,\Delta^+,S)$ is a proper locality over $\F$ with $\Delta\subseteq\Delta^+$ and $\L=\L^+|_\Delta$, then we can choose the proper locality $(\L^s,\F^s,S)$ over $\F$ such that $\L^s|_{\Delta^+}=\L^+$. We have then $\Psi_{\L^+}=\Psi_{\L^s}\circ\Phi_{\L^s,\L^+}^{-1}$. As $\Phi_{\L^s,\L}=\Phi_{\L^+,\L}\circ\Phi_{\L^s,\L^+}$, we have $\Phi_{\L^s,\L^+}^{-1}=\Phi_{\L^s,\L}^{-1}\circ \Phi_{\L^+,\L}$ and hence 
\[\Psi_{\L^+}=\Psi_{\L^s}\circ\Phi_{\L^s,\L^+}^{-1}=\Psi_{\L^s}\circ\Phi_{\L^s,\L}^{-1}\circ \Phi_{\L^+,\L}=\Psi_\L\circ \Phi_{\L^+,\L}.
\]
This proves part (c) of Theorem~\ref{mainStepIV}. 

\smallskip

\textbf{(e)} For the proof of (e) we use that, by Lemma~\ref{L:Regular10.4}, the set $\delta(\F)$ is $\F$-closed with $\F^{cr}\subseteq\delta(\F)\subseteq\F^s$. So setting $\L_\delta=\L^s|_{\delta(\F)}$, the triple $(\L_\delta,\delta(\F),S)$ is a regular locality over $\F$. Recall that, by Theorem~\ref{T:VaryObjects}(b), $\Phi_{\L^s,\L}\colon\fN(\L^s)\rightarrow\fN(\L)$ is an inclusion-preserving  bijection such that $\Phi_{\L^s,\L}^{-1}$ is also inclusion-preserving. Using this, one sees that (e) is true if and only if it is true with $(\L^s,\F^s,S)$ in place of $(\L,\Delta,S)$. Similarly one can show that (e) is true for $(\L_\delta,\delta(\F),S)$ if and only if it is true for $(\L^s,\F^s,S)$. Hence, without loss of generality, we may assume for the proof of (e) that $(\L,\Delta,S)$ is a regular locality. Let $\N_1$ and $\N_2$ be two partial normal subgroups of $\L$ with $\N_1\subseteq\N_2$. By Theorem~\ref{T:mainRegularPartialNormal}(a), $\N_2$ is a regular locality over $\F_{S\cap\N_2}(\N_2)$. Since  $\N_1\unlhd\N_2$, it follows thus from Theorem~\ref{T:mainRegularPartialNormal}(c) applied with $\N_2$ in place of $\L$ that $\F_{S\cap\N_1}(\N_1)$ is normal in $\F_{S\cap\N_2}(\N_2)$. By part (b) of Theorem~\ref{mainStepIV}, we have $\Psi_\L(\N_i)=\F_{S\cap\N_i}(\N_i)$ for $i=1,2$. This shows part (e) of Theorem~\ref{mainStepIV}.

\smallskip

\textbf{(f)} Let now $R\leq S$ with $R\unlhd\L$. Then $R\unlhd\F=\F_S(\L)$. From this it is easy to check that $\F_R(R)\unlhd\F$. It follows thus from (a) that $\Psi_\L(R)=\F_R(R)$. This proves (f). 

\smallskip

\textbf{(g)} For the proof of (g) let now $\alpha\in\Aut(\L,S)$ and $\N\in\fN(\L)$. Notice that $\N\alpha\in\fN(\L)$. Set $\E:=\Psi_\L(\N)$ and $\beta:=\alpha|_S$. It is easy to check that $\beta\in\Aut(\F)$ and thus $\E^\beta\unlhd\F$ as $\E\unlhd\F$. Moreover, $\alpha|_\N\colon \N\rightarrow \N\alpha$ is an isomorphism of partial groups and $(S\cap\N)\alpha=S\alpha\cap \N\alpha=S\cap\N\alpha$. From this it is easy to see that $\alpha|_{\N\cap S}=\beta|_{\N\cap S}$ induces an isomorphism from $\F_{S\cap \N}(\N)$ to $\F_{S\cap \N\alpha}(\N\alpha)$. As $\E$ is by (a) the smallest normal subsystem of $\F$ containing $\F_{S\cap \N}(\N)$, it follows that $\E^\alpha$ is the smallest normal subsystem of $\F$ containing $\F_{S\cap \N}(\N)^\beta=\F_{S\cap \N\alpha}(\N\alpha)$. Hence, again by (a), we have $\E^\beta=\Psi_\L(\N\alpha)$, i.e. $\beta$ induces an isomorphism from $\E$ to $\Psi_\L(\N\alpha)$. This proves (g) and completes thus the proof of the theorem. 
\end{proof}

\section{Towards a more comprehensive dictionary}

In the previous section we showed that there is a one-to-one correspondence between the normal subsystems of a fusion system and the partial normal subgroups of an associated proper locality. We will use this now to establish a more comprehensive ``dictionary'' showing how concepts in fusion systems translate to concepts in associated proper localities. Along the way, we also reprove a theorem of Aschbacher \cite[Theorem~1]{Aschbacher:2011} about the existence of ``normal intersections'' of normal subsystems (see Theorem~\ref{T:Intersections}). 

\smallskip

\textbf{Throughout this section let $\F$ be a saturated fusion system over $S$. As before, write $\fN(\F)$ for the set of normal subsystems of $\F$ and $\fN(\L)$  for the set of partial normal subgroups of any given partial group $\L$. If $(\L,\Delta,S)$ is a proper locality over $\F$, then $\Psi_\L\colon\fN(\L)\rightarrow\fN(\F)$ will always denote the map from Theorem~\ref{mainStepIV}.}

\smallskip

The reader should observe that the map $\Psi_\L$ from Theorem~\ref{mainStepIV} must be the same as the map $\Psi_\L$ in Theorem~\ref{main}. Thus it makes sense to prove Theorem~\ref{T:Intersections} at this stage, even though the proof of Theorem~\ref{main} is not yet complete.

\subsection{Intersections of partial normal subgroups}\label{SS:IntersectionsNormal}

In any partial group, the intersection of partial normal subgroups is trivially a partial normal subgroup. Hence, the results from the previous section imply easily the existence of ``normal intersections'' of normal subsystems of fusion systems. As a first step we prove the following proposition.

\begin{prop}\label{P:Intersections}
Let $(\L,\Delta,S)$ be a proper locality over $\F$. Let $I$ be an index set such that, for any $i\in I$, we are given $\N_i\in\fN(\L)$. Set 
\[\M:=\bigcap_{i\in I}\N_i\] 
and for $i\in I$ set  $\E_i:=\Psi_\L(\N_i)$. Then the following hold:
\begin{itemize}
\item [(a)] $\E_i$ is a normal subsystem of $\F$ over $T_i:=S\cap \N_i$ for all $i\in I$. 
\item [(b)] $\M$ is a partial normal subgroup of $\L$ and $\Psi_\L(\M)$ is a normal subsystem of $\F$ over $\M\cap S=\bigcap_{i\in I}T_i$ which is contained in $\bigcap_{i\in I}\E_i$.
\item [(c)] $\Psi_\L(\M)$ is the largest normal subsystem of $\F$ which is normal in $\E_i$ for all $i\in I$. Indeed, every normal subsystem of $\F$ which is normal in $\E_i$ for all $i\in I$ is also normal in $\Psi_\L(\M)$. 
\end{itemize}  
\end{prop}

\begin{proof}
Clearly, $\M$ is a partial normal subgroup of $\L$. It follows from Theorem~\ref{mainStepIV}(a) that $\E_i$ is a normal subsystem of $\F$ over $T_i=\N_i\cap S$ for all $i\in I$, and that 
\[\E:=\Psi_\L(\M)\]
is a normal subsystem of $\F$ over $\M\cap S=\bigcap_{i\in I}T_i$. Moreover, as $\M\subseteq\N_i$, Theorem~\ref{mainStepIV}(e) gives that $\E\unlhd\E_i$ for all $i\in I$. In particular, $\E\subseteq\bigcap_{i\in I}\E_i$. So (a) and (b) hold, and for (c) we only need to show that, given a normal subsystem $\m{D}$ of $\F$ which is normal in $\E_i$ for all $i\in I$, we have $\m{D}\unlhd\E$. Pick $\m{D}$ with $\m{D}\unlhd\E_i$ for all $i\in I$ and observe that, by Theorem~\ref{mainStepIV}(d),
\[\K:=\Psi_\L^{-1}(\m{D})\subseteq\Psi_\L^{-1}(\E_i)=\N_i\mbox{ for all }i\in I.\]
Hence, $\K\subseteq \bigcap_{i\in I}\N_i=\M$ and, again by Theorem~\ref{mainStepIV}(e), $\m{D}=\Psi_\L(\K)\unlhd\Psi_\L(\M)=\E$. 
\end{proof}

We will now show the following generalization of Theorem~\ref{T:Intersections}. The reader might want to note that Theorem~\ref{T:IntersectionsI} could also be obtained the other way around from Theorem~\ref{T:Intersections} by induction as $I$ can be assumed to be finite.

\begin{theorem}\label{T:IntersectionsI}
Let $\F$ be a saturated fusion system over $S$. Then for every family $(\E_i)_{i\in I}$ of normal subsystems of $\F$ there exists a normal subsystem $\bigwedge_{i\in I}\E_i$ (denoted by $\E_1\wedge\E_2\wedge\cdots\wedge\E_k$ if $I=\{1,2,\dots,k\}$) such that the following hold, whenever $I$ is an index set and $\E_i$ is a normal subsystems of $\F$ over $T_i$ for all $i\in I$:
\begin{itemize}
 \item [(a)] The subsystem $\bigwedge_{i\in I}\E_i$ is a normal subsystem of $\F$ over $\bigcap_{i\in I}T_i$ contained in $\bigcap_{i\in I}\E_i$; moreover $\bigwedge_{i\in I}\E_i$ is the largest normal subsystem of $\F$ which is normal in $\E_i$ for all $i\in I$.
 \item [(b)] Every normal subsystem of $\F$, which is normal in $\E_i$ for all $i\in I$, is also normal in $\bigwedge_{i\in I}\E_i$.  
 \item [(c)] If $I$ is the disjoint union of subsets $I_1,\dots,I_k$, then 
\[\bigwedge_{i\in I}\E_i=\left(\bigwedge_{i\in I_1}\E_i\right)\wedge\left(\bigwedge_{i\in I_2}\E_i\right)\wedge\cdots \wedge \left(\bigwedge_{i\in I_k}\E_i\right).\]  
 \item [(d)] If $(\L,\Delta,S)$ is a proper locality over $\F$, then for any collection of  partial normal subgroups $\N_i$ of $\L$ ($i\in I$), we have \[\Psi_\L(\bigcap_{i\in I}\N_i)=\bigwedge_{i\in I}\Psi_\L(\N_i).\]
\end{itemize}
\end{theorem}

\begin{proof}
By Theorem~\ref{T:ProperLocalityExistence}, we may choose a proper locality $(\L,\Delta,S)$ over $\F$. Set
\[\bigwedge_{i\in I}\E_i:=\Psi_\L(\bigcap_{i\in I}\Psi_\L^{-1}(\E_i)).\]
With this definition, parts (a) and (b) hold by Proposition~\ref{P:Intersections}. In particular, the definition of $\bigwedge_{i\in I}\E_i$ is independent of the choice of $\L$. So part (d) holds by definition of $\bigwedge_{i\in I}\E_i$. Part (c) holds as taking the intersection of sets is an associative operation.  
\end{proof}

\subsection{Index prime to $p$ and $p$-power index}\label{SS:ResiduesFusionLoc}

The reader is referred to Section~\ref{SS:Fusionppowerindex} for the definition of $O^p(\F)$ and subsystems of $\F$ of $p$-power index. Write $\mathbb{D}^p=\mathbb{D}^p_\F$ for the set of normal subsystems of $\F$ of $p$-power index. Using the language introduced in the previous subsection we have the following lemma.

\begin{lemma}\label{L:OupperpFBigCap}
$O^p(\F)=\bigcap_{\m{D}\in\mathbb{D}^p}\m{D}=\bigwedge_{\m{D}\in\mathbb{D}^p}\m{D}$.
\end{lemma}

\begin{proof}
 Recall from Section~\ref{SS:Fusionppowerindex} (or from \cite[Theorem~I.7.4]{Aschbacher/Kessar/Oliver:2011}) that $O^p(\F)\in\mathbb{D}^p$ and $O^p(\F)\subseteq\m{D}$ for every $\m{D}\in\mathbb{D}^p$. Hence, $O^p(\F)\subseteq\bigcap_{\m{D}\in\mathbb{D}^p}\m{D}\subseteq O^p(\F)$ and thus $O^p(\F)=\bigcap_{\m{D}\in\mathbb{D}^p}\m{D}$. We use the characterization of $\bigwedge_{\m{D}\in\mathbb{D}^p}\m{D}$ given in Theorem~\ref{T:IntersectionsI}(a). Clearly $O^p(\F)=\bigcap_{\m{D}\in\mathbb{D}^p}\m{D}\supseteq \bigwedge_{\m{D}\in\mathbb{D}^p}\m{D}$. Notice that $O^p(\F)\unlhd\F$ and, by Lemma~\ref{L:pPowerIndexProducts}, $O^p(\F)=O^p(\m{D})\unlhd\m{D}$ for all $\m{D}\in\mathbb{D}^p$. Hence, it follows also $O^p(\F)\subseteq\bigwedge_{\m{D}\in\mathbb{D}^p}\m{D}$.
\end{proof}

A subsystem $\E$ of $\F$ is said to be of \emph{index prime to $p$} if it is a subsystem over $S$ and $O^{p^\prime}(\Aut_\F(P))\leq \Aut_\E(P)$ for every $P\leq S$. If $\E\unlhd\F$, then one checks easily that $\E$ is of index prime to $p$ if and only if $\E$ is a fusion system over $S$. Note that it follows easily from Theorem~\ref{T:Intersections}(a) (or from the similar statement in \cite[Theorem~1]{Aschbacher:2011}) that there is a unique smallest normal subsystem of $\F$ of index prime to $p$. Namely, if $\mathbb{D}^{p^\prime}=\mathbb{D}^{p^\prime}_\F$ denotes the set of all normal subsystems of $\F$ of index prime to $p$, then
\[O^{p^\prime}(\F):=\bigwedge_{\m{D}\in\mathbb{D}^{p^\prime}}\m{D}\]
is a normal subsystem of $\F$ of $p$-power index which is contained in $\bigcap_{\m{D}\in\mathbb{D}^{p^\prime}}\m{D}$ and thus the unique smallest element of $\mathbb{D}^{p^\prime}$ with respect to inclusion. It follows from \cite[Theorem~I.7.7]{Aschbacher/Kessar/Oliver:2011}, that $O^{p^\prime}(\F)$ as defined above is the smallest saturated subsystem of $\E$ of index prime to $p$ and thus coincides with the equally denoted subsystem from that theorem. We will use this property in the proof of Lemma~\ref{L:ComponentsResiduesFusion} below, but it is not needed in this section and in the proofs of our main results as long as we use the definition of $O^{p^\prime}(\F)$ above. 

\smallskip

For the results we state now, the reader might want to recall Definition~\ref{D:OpLOpprimeL}.

\begin{prop}\label{Oupperp}
Let $(\L,\Delta,S)$ be a proper locality over $\F$. If $\N\in\fN(\L)$ and $\E:=\Psi_\L(\N)$, then the following hold:
\begin{itemize}
 \item[(a)] $\N$ has index prime to $p$ if and only if $\E$ has index prime to $p$.
 \item[(b)] $\N$ has $p$-power index if and only if $\E$ has $p$-power index.
 \item[(c)] $\Psi_\L(O^p_\L(\N))=O^p(\E)$ and $\Psi_\L(O^{p^\prime}_\L(\N))=O^{p^\prime}(\E)$. In particular, $\Psi_\L(O^p(\L))=O^p(\F)$ and $\Psi_\L(O^{p^\prime}(\L))=O^{p^\prime}(\F)$.
\end{itemize}
\end{prop}

\begin{proof}
\textbf{(a)} By Theorem~\ref{mainStepIV}(a), $\E$ is a fusion system over $S\cap\N$. So $S\subseteq\N$ if and only if $\E$ is a fusion system over $S$, i.e. (a) holds. 

\smallskip

\textbf{(b,c)} For the proofs of (b) and (c) consider first the situation that $(\L^+,\Delta^+,S)$ is a proper locality with $\Delta\subseteq\Delta^+$ and $\L=\L^+|_\Delta$. Set $\N^+=\Phi_{\L^+,\L}^{-1}(\N)$. By \cite[Theorem~C(c)]{Henke:2020}, $\N$ has $p$-power index in $\L$ if and only if $\N^+$ has $p$-power index in $\L^+$. Moreover, Theorem~\ref{mainStepIV}(c) yields $\Psi_{\L^+}(\N^+)=\Psi_\L(\N)$. Hence 
\begin{equation}\label{E:OpLb}
\mbox{(b) holds if and only if (b) holds with $(\L^+,\N^+)$ in place of $(\L,\N)$.}
\end{equation}
Similarly, if $*$ is one of the symbols ``$p$'' or ``$p^\prime$'', then we have $O^*_\L(\N)=\Phi_{\L^+,\L}(O^*_{\L^+}(\N^+))$ by \cite[Lemma~7.5]{Henke:Regular}. Using again Theorem~\ref{mainStepIV}(c), this implies  
\[\Psi_{\L^+}(O^*_{\L^+}(\N^+))=\Psi_\L(\Phi_{\L^+,\L}(O^*_{\L^+}(\N^+)))=\Psi_\L(O^*_\L(\N)).\]
Thus, 
\begin{equation}\label{E:OpLc}
\mbox{ part (c) is true for $\L$ and $\N$ if and only if it is true for $\L^+$ and $\N^+$.} 
\end{equation}

\smallskip

By Theorem~\ref{T:VaryObjects}(d), there exists a subcentric locality $(\L^s,\F^s,S)$ over $\F$ with $\L^s|_\Delta=\L$.  So for the proof of (b), \eqref{E:OpLb} allows us to assume without loss of generality that  $\Delta=\F^s$. Then $\E=\F_T(\N)$ by Theorem~\ref{mainStepIV}(b). So by Corollary~\ref{C:ERNR0}, $\F_S(\N S)=\E S$ and $(\N S,\Delta,S)$ is a proper locality over $\E S$. If $\N$ has $p$-power index in $\L$, then $\L=\N S$ and thus $\F=\F_S(\N S)=\E S$. So $\E$ has in this case $p$-power index by Lemma~\ref{L:pPowerIndexProducts}. By the same lemma, if $\E$ has $p$-power index in $\F$, then $\E S=\F$. So $(\N S,\Delta,S)$ is in this case a proper locality over $\F$ and thus isomorphic to $(\L,\Delta,S)$ by \cite[Theorem~A(a)]{Henke:2015}. Hence, as $\N S\subseteq\L$, we have then $\L=\N S$ and $\N$ has $p$-power index in $\L$. This shows (b).

\smallskip

For the proof of (c) we continue to work with the subcentric locality $(\L^s,\F^s,S)$ as above. Note that the restriction $(\L^s|_{\delta(\F)},\delta(\F),S)$ is well-defined and a regular locality over $\F$ since $\delta(\F)$ is $\F$-closed by Lemma~\ref{L:Regular10.4}. Applying  \eqref{E:OpLc} twice, for the proof of (c) we may replace $(\L,\Delta,S)$ by $(\L^s|_{\delta(\F)},\delta(\F),S)$ and assume that $(\L,\Delta,S)$ is a regular locality over $\F$. Then $\E=\F_T(\N)$ by Theorem~\ref{mainStepIV}(b) and $(\N,\delta(\E),S\cap\N)$ is a regular locality over $\E$ by Theorem~\ref{T:mainRegularPartialNormal}(a). By \cite[Corollary~7.11]{ChermakIII} or \cite[Lemma~10.18]{Henke:Regular}, we have $O^*_\L(\N)=O^*(\N)$. Setting $\K:=O^*(\N)$, Theorem~\ref{mainStepIV}(b) gives moreover $\Psi_\L(\K)=\F_{S\cap \K}(\K)=\Psi_\N(\K)$. Thus, it is sufficient to show  $\Psi_\L(O^*(\L))=O^*(\F)$.

\smallskip

We use now the notation $\mathbb{D}^p$ and $\mathbb{D}^{p^\prime}$ from above as well as the notation from Definition~\ref{D:OpLOpprimeL}. If $*$ denotes the symbol ``$p$'', write $\mathbb{D}^*$ for $\mathbb{D}^p$ and set $\mathbb{K}^*:=\mathbb{K}_\L$. If $*$ denotes ``$p^\prime$'', write $\mathbb{D}^*$ for $\mathbb{D}^{p^\prime}$ and set $\mathbb{K}^*:=\mathbb{K}_\L'$. By Lemma~\ref{L:OupperpFBigCap} and the definition of $O^{p^\prime}(\F)$ above, we have 
\[O^*(\F)=\bigwedge_{\m{D}\in\mathbb{D}^*}\m{D}.\]
Similarly, $O^*(\L)=\bigcap_{\K\in\mathbb{K}^*}\K$ by definition. Properties (a) and (b) together with the fact that $\Psi_\L$ is a bijection give moreover that $\mathbb{D}^*=\{\Psi_\L(\K)\colon \K\in\mathbb{K}^*\}$. Hence, by Theorem~\ref{T:IntersectionsI}(c), we have
\[\Psi_\L(O^*(\L))=\Psi_\L(\bigcap_{\K\in\mathbb{K}^*}\K)=\bigwedge_{\K\in\mathbb{K}^*}\Psi_\L(\K)=\bigwedge_{\m{D}\in\mathbb{D}^*}\m{D}=O^*(\F).\]
This proves (c).
\end{proof}

\begin{corollary}\label{C:Oupperp}
Suppose $(\L,\Delta,S)$ is a proper locality over $\F$. Then $\L=O^p(\L)$ if and only if $\F=O^p(\F)$. Similarly $\L=O^{p^\prime}(\L)$ if and only if $\F=O^{p^\prime}(\F)$.
\end{corollary}

\begin{proof}
As $\F_S(\L)=\F$, Theorem~\ref{mainStepIV}(a) implies that $\Psi_\L(\L)=\F$. Proposition~\ref{Oupperp}(c) gives moreover that   $\Psi_\L(O^p(\L))=O^p(\F)$ and $\Psi_\L(O^{p^\prime}(\L))=O^{p^\prime}(\F)$. As $\Psi_\L$ is a bijection, the assertion follows.
\end{proof}

\subsection{Simple and quasisimple localities and fusion systems}

Recall the definition of simple and quasisimple proper localities from Definition~\ref{D:SimpleQuasisimple}. The fusion system $\F$ is called \emph{simple} if $\F$ has precisely two normal subsystems, and $\F$ is called \emph{quasisimple} if $\F=O^p(\F)$ and $\F/Z(\F)$ is simple.

\begin{prop}\label{SimpleQuasisimpleTranslate}
If $(\L,\Delta,S)$ is a proper locality over $\F$, then the following hold:
\begin{itemize}
 \item [(a)] $\L$ is simple if and only if $\F$ is simple.
 \item [(b)] $Z(\L)=Z(\F)$.
 \item [(c)] $\L$ is quasisimple if and only if $\F$ is quasisimple.
\end{itemize}
\end{prop}

\begin{proof}
\textbf{(a)} By Proposition~\ref{mainStepIV}, there exists a bijection $\Psi_\L\colon\fN(\L)\rightarrow\fN(\F)$, so (a) holds. 

\smallskip

\textbf{(b)} As $(\L,\Delta,S)$ is a proper locality, we have that $Z(\L)\subseteq C_\L(S)\leq S$. So \cite[Proposition~5]{Henke:2015} yields $Z(\L)=Z(\F)$, i.e. (b) holds. 

\smallskip

\textbf{(c)} For any $Z\leq Z(\L)$, it is true by \cite[Proposition~9.3(a),(d)]{Henke:2015} that $(\L/Z,\Delta/Z,S/Z)$ is a proper locality over $\F/Z$, where $\Delta/Z:=\{PZ/Z\colon P\in\Delta\}$. Set now $Z:=Z(\L)=Z(\F)$. By (a), $\L/Z=\L/Z(\L)$ is simple if and only if $\F/Z=\F/Z(\F)$ is simple. By Corollary~\ref{C:Oupperp}, $\F=O^p(\F)$ if and only if $\L=O^p(\L)$. Hence, (c) holds.
\end{proof}

\subsection{Commuting partial normal subgroups and subsystems centralizing each other}\label{SS:CentralizingCommuting} In this subsection we will relate the concepts introduced in Subsections~\ref{SS:CentralProdFusion} and \ref{SS:CFE} to the concepts introduced in Subsection~\ref{SS:Nperp}.

\begin{prop}\label{P:NperpCFE}
Suppose $(\L,\Delta,S)$ is a proper locality over $\F$ and $\N\unlhd \L$. Set $\E:=\Psi_\L(\N)$. Then 
\[\Psi_\L(\N^\perp)=C_\F(\E).\]
In particular, if $(\L,\Delta,S)$ is a regular locality, then $\F_{C_S(\N)}(C_\L(\N))=C_\F(\E)$.
\end{prop}

\begin{proof}
If $(\L,\Delta,S)$ is a regular locality, then $\N^\perp=C_\L(\N)$ by Theorem~\ref{T:mainRegularPartialNormal}(b). Moreover, by Theorem~\ref{mainStepIV}(b), the map $\Psi_\L$ is in this case given by $\Psi_\L(\M)=\F_{S\cap\M}(\M)$ for all $\M\unlhd\L$. Hence, it is sufficient to show $\Psi_\L(\N^\perp)=C_\F(\E)$ if $(\L,\Delta,S)$ is arbitrary. We show this in two steps.

\smallskip

\emph{Step~1:} We show $\Psi_\L(\N^\perp)=C_\F(\E)$ in the case $\Delta=\F^s$. For the proof assume $\Delta=\F^s$ and set $T:=\N\cap S$. Then by Lemma~\ref{L:NFTSubcentricLocality}, $(\N_\L(T),N_\F(T)^s,S)$ is a subcentric locality over $N_\F(T)$ and $C_\L(T)\unlhd\L$ with $C_\F(T)=\F_{C_S(T)}(C_\L(T))$. In particular, 
\[\M:=O^p_{N_\L(T)}(C_\L(T))\]
is defined. By \cite[Corollary~9.9]{Henke:Regular}, we have $\N^\perp=C_S(\N)\M$ and $\N^\perp\cap S=C_S(\N)$. By Theorem~\ref{mainStepIV}(b), $\E=\Psi_\L(\N)=\F_{S\cap\N}(\N)\unlhd\F$. Hence, it follows from \cite[Proposition~4]{Henke:2015} that $\N^\perp\cap S=C_S(\N)=C_S(\E)$. 

\smallskip

Using again Theorem~\ref{mainStepIV}(b) (applied both to $(\L,\Delta,S)$ and to $(N_\L(T),N_\F(T)^s,S)$), we have $\Psi_\L(\N^\perp)=\F_{S\cap\N^\perp}(\N^\perp)=\Psi_{N_\L(T)}(\N^\perp)$. Moreover, $\Psi_{N_\L(T)}(C_\L(T))=\F_{C_S(T)}(C_\L(T))=C_\F(T)$ and so, by Proposition~\ref{Oupperp}(c), $\Psi_{N_\L(T)}(\M)=O^p(\Psi_{N_\L(T)}(C_\L(T)))=O^p(C_\F(T))$. As $\M\subseteq\N^\perp\subseteq C_\L(T)$, it follows now from Theorem~\ref{mainStepIV}(e) that
\[O^p(C_\F(T))=\Psi_{N_\L(T)}(\M)\unlhd \Psi_{N_\L(T)}(\N^\perp)\unlhd \Psi_{N_\L(T)}(C_\L(T))=C_\F(T).\]
Hence, by Lemma~\ref{L:pPowerIndexProducts}, $\Psi_\L(\N^\perp)=\Psi_{N_\L(T)}(\N^\perp)$ is a normal subsystem of $C_\F(T)$ of $p$-power index. Thus, by \cite[Theorem~I.7.4]{Aschbacher/Kessar/Oliver:2011}, $\Psi_\L(\N^\perp)$ is the unique saturated subsystem of $C_\F(T)$ over $\N^\perp\cap S=C_S(\E)$, which is of $p$-power index in $C_\F(T)$ (cf. Subsection~\ref{SS:Fusionppowerindex}). So $\Psi_\L(\N^\perp)=C_\F(\E)$ by definition of $C_\F(\E)$ (cf. Subsection~\ref{SS:CFE}). 

\smallskip

\emph{Step~2:} We show $\Psi_\L(\N^\perp)=C_\F(\E)$ if $(\L,\Delta,S)$ is arbitrary. By Theorem~\ref{T:VaryObjects}(d), there exists a subcentric locality $(\L^s,\F^s,S)$ over $\F$ with $\L^s|_\Delta=\L$ and we can consider the map $\Phi_{\L^s,\L}\colon \fN(\L^s)\rightarrow \fN(\L),\M^s\mapsto \M^s\cap\L$ which is a bijection by Theorem~\ref{T:VaryObjects}(b). Setting $\N^s:=\Phi_{\L^s,\L}^{-1}(\N)$, it follows from Step~1 that $\Psi_{\L^s}((\N^s)^\perp)=C_\F(\E)$. Furthermore, by \cite[Lemma~9.11]{Henke:Regular}, we have $\Phi_{\L^s,\L}((\N^s)^\perp)=\N^\perp$. Hence, using Theorem~\ref{mainStepIV}(c) we can conclude that $\E=\Psi_\L(\N)=\Psi_\L(\Phi_{\L^s,\L}(\N^s))=\Psi_{\L^s}(\N^s)$ and  $\Psi_\L(\N^\perp)=\Psi_\L(\Phi_{\L^s,\L}((\N^s)^\perp))=\Psi_{\L^s}((\N^s)^\perp)=C_\F(\E)$. This completes the proof.
\end{proof}

\begin{corollary}\label{C:CentricPartialNormal}
Let $(\L,\Delta,S)$ be a regular locality over $\F$ and $\N\unlhd\L$. Set $T:=S\cap\N$ and $\E:=\Psi_\L(\N)$ so that $\E$ is a normal subsystem of $\F$ over $T$. Then the following are equivalent:
\begin{itemize}
 \item [(i)] $C_\F(\E)\subseteq\E$;
 \item [(ii)] $C_S(\E)\leq T$;
 \item [(iii)] $\N^\perp\subseteq\N$;
 \item [(iv)] $\N^\perp\cap S\leq T$;
 \item [(v)] $C_\F(\E)=\F_{Z(\E)}(Z(\E))$.
\end{itemize}
\end{corollary}

\begin{proof}
Clearly (i) implies (ii) and (v) implies (i). By Proposition~\ref{P:NperpCFE}, $\Psi_\L(\N^\perp)=C_\F(\E)$. In particular, $\N^\perp\cap S=C_S(\E)$. Hence, (ii) and (iv) are equivalent. 

\smallskip

By \cite[Lemma~9.21, Corollary~10.10]{Henke:Regular}, (iii) and (iv) are equivalent as well; moreover, these properties are equivalent to $\N^\perp=Z(\N^s)$, where $(\L^s,\F^s,S)$ is a subcentric locality over $\F$ with $\L^s|_\Delta=\L$ and $\N^s\unlhd\L^s$ such that $\N^s\cap \L=\N$. Assume the latter property holds. It remains to show that (v) follows. By Theorem~\ref{mainStepIV}(b),(c) we have that $\E=\Psi_\L(\N)=\Psi_\L(\Phi_{\L^s,\L}(\N^s))=\Psi_{\L^s}(\N^s)=\F_{S\cap \N^s}(\N^s)$. By \cite[Lemma~9.10]{Henke:Regular}, $Z(\N^s)\leq S$. It is shown in \cite[Proposition~4]{Henke:2015} that $C_S(\E)=C_S(\N^s)$. Hence, we can conclude that $Z(\N^s)\leq Z(\E)\leq C_S(\E)\cap T=C_S(\N^s)\cap T\leq Z(\N^s)$ and thus $\N^\perp=Z(\N^s)=Z(\E)$. It follows  $C_\F(\E)=\Psi_\L(\N^\perp)=\Psi_\L(Z(\E))=\F_{Z(\E)}(Z(\E))$ by Theorem~\ref{mainStepIV}(f). 
\end{proof}

Recall Definition~\ref{D:Commute}. We say that partial normal subgroups $\N_1,\dots,\N_k$ of $\L$ \emph{commute pairwise} if $\N_i$ commutes with $\N_j$ for all $i,j\in\{1,2,\dots,k\}$ with $i\neq j$.

\begin{prop}\label{P:N1N2Perpendicular}
Suppose $(\L,\Delta,S)$ is a proper locality over $\F$ and $k\in\mathbb{N}$ with $k\geq 1$. For $i=1,2,\dots,k$ let  $\N_i\unlhd\L$ and set $\E_i=\Psi_\L(\N_i)$. Then $\N_1,\N_2,\dots,\N_k$ commute pairwise if and only if $\E_1,\E_2,\dots,\E_k$ centralize each other in $\F$. Moreover, if so, then $\Psi_\L(\N_1\N_2\cdots \N_k)=\E_1*\E_2*\cdots *\E_k$.
\end{prop}

\begin{proof}
For $i=1,\dots,k$ set $S_i=\N_i\cap S$ and note that $\E_i$ is a subsystem of $\F$ over $S_i$. Assume first that $\N_1,\N_2,\dots,\N_k$ commute pairwise. Then $\N_j\subseteq \N_i^\perp$ for $i\neq j$ and hence $\M:=\prod_{j\neq i}\N_j\subseteq \N_i^\perp$. Thus, $\N_i\subseteq\M^\perp$ by \cite[Corollary~5.13]{Henke:Regular} and hence $\E_i=\Psi_\L(\N_i)\subseteq\Psi_\L(\M^\perp)\subseteq C_\F(\Psi_\L(\M))\subseteq C_\F(\M\cap S)$ by Proposition~\ref{P:NperpCFE}. By Theorem~\ref{T:ProductsPartialNormal}(a), $\M\cap S=\prod_{j\neq i}S_j$. Hence, it follows from Lemma~\ref{L:CentralizeEachOtherSaturated} that $\E_1,\dots,\E_k$ centralize each other. 

\smallskip

Assume now that $\E_1,\E_2,\dots,\E_k$ centralize each other. Fix $i,j\in\{1,2,\dots,k\}$ with $i\neq j$. Note that $\E_i$ and $\E_j$ centralize each other. So by Proposition~\ref{P:DEcentralize}, we have $\E_i\unlhd C_\F(\E_j)$. Thus, it follows from Theorem~\ref{mainStepIV}(d) and Proposition~\ref{P:NperpCFE} that $\N_i=\Psi_\L^{-1}(\E_i)\subseteq \Psi_\L^{-1}(C_\F(\E_j))=\N_j^\perp$. Hence, $\N_i$ commutes with $\N_j$ by definition of $\N_j^\perp$. 

\smallskip

We have shown now that $\E_1,\E_2,\dots,\E_k$ centralize each other in $\F$ if and only if $\N_1,\N_2,\dots,\N_k$ commute pairwise. Assume now that these two equivalent conditions hold. By Lemma~\ref{L:CharacterizeF1starF2Real}, $\E_1*\E_2*\cdots *\E_k\unlhd\F$ and $\E_i\unlhd\E_1*\E_2*\cdots *\E_k$ for $i=1,2,\dots,k$. Hence, by Theorem~\ref{mainStepIV}(d), we have $\N_i=\Psi_\L^{-1}(\E_i)\subseteq \N:=\Psi_\L^{-1}(\E_1*\E_2*\cdots *\E_k)$ for $i=1,2,\dots,k$ and therefore $\N_1\N_2\cdots\N_k\subseteq \N$. As $\Psi_\L$ is inclusion-preserving, it follows  $\E:=\Psi_\L(\N_1\N_2\cdots\N_k)\subseteq\Psi_\L(\N)=\E_1*\E_2*\cdots *\E_k$. Since $\N_i\subseteq\N_1\N_2\cdots\N_k$, Theorem~\ref{mainStepIV}(e) gives $\E_i=\Psi_\L(\N_i)\unlhd \E$ for $i=1,2,\dots,k$. Hence, by Lemma~\ref{L:CharacterizeF1starF2}(c), we have $\E_1*\E_2*\cdots *\E_k\subseteq \E$. This proves $\E_1*\E_2*\cdots *\E_k=\E=\Psi_\L(\N_1\N_2\cdots\N_k)$ as required.
\end{proof}

\subsection{Characteristic subsystems}

Following Aschbacher \cite[p.~40]{Aschbacher:2011} we define a subsystem $\E$ of $\F$ over $T\leq S$ to be \emph{characteristic} in $\F$ if $\E\unlhd\F$ and $\E$ is $\Aut(\F)$-invariant. The latter property means that, for every $\alpha\in\Aut(\F)$, $T\alpha=T$ and $\alpha|_T$ induces an automorphism of $\E$. 

\begin{prop}\label{P:CharacteristicSubsystems}
 Let $(\L,\Delta,S)$ be a proper locality over $\F$, let $\N\unlhd\L$ and set $\E:=\Psi_\L(\N)$. Then the following hold:
\begin{itemize}
 \item [(a)] If $\beta\in\Aut(\L,S)$ and $\alpha=\beta|_S$, then $\N\beta=\N$ if and only if $\E^\alpha=\E$.
 \item [(b)] Suppose $\Delta\in\{\F^c,\F^q,\F^s,\delta(\F)\}$ or assume more generally that $\Delta$ is $\Aut(\F)$-invariant. Then $\N$ is $\Aut(\L,S)$-invariant if and only if $\E$ is characteristic in $\F$. 
\end{itemize}
\end{prop}

\begin{proof}
\textbf{(a)} Suppose first that $\alpha$ and $\beta$ are as in (a). It is a restatement of Theorem~\ref{mainStepIV}(g) that $\E^\alpha=\Psi_\L(\N\beta)$. As $\Psi_\L$ is a bijection, this yields (a).

\smallskip

\textbf{(b)} The sets $\F^c$, $\F^q$, $\F^s$ and $\delta(\F)$ are $\Aut(\F)$-invariant; this is elementary to check for $\F^c$, $\F^q$ and $\F^s$ (cf. \cite[Lemma~3.6]{Henke:2015}) and for $\delta(\F)$ this is shown in \cite[Lemma~11.19]{Henke:Regular}. Thus, for the proof of (b) we may just assume that $\Delta$ is $\Aut(\F)$-invariant. One observes easily that $\beta|_S\in\Aut(\F)$ for every $\beta\in\Aut(\L,S)$; this follows also from the more general statement \cite[Lemma~2.21(b)]{Henke:2020}. On the other hand, as $\Delta$ is $\Aut(\F)$-invariant, it follows from \cite[Proposition~3.19]{Henke:2016} (which uses essentially the uniqueness statement in Theorem~\ref{T:ProperLocalityExistence}) that every element of $\Aut(\F)$ extends to an element of $\Aut(\L,S)$. Thus, (b) follows from (a).
\end{proof}

\section{Subnormal subsystems, partial subnormal subgroups and components} 

\textbf{Throughout this section let $\F$ be a saturated fusion system over $S$.}

\smallskip

In this section, we will be particularly interested in \emph{regular} localities over $\F$. The reason is that every partial subnormal subgroup of a regular locality is a regular locality and there are components, the layer and the generalized Fitting subgroup  defined; see \cite{ChermakIII, Henke:Regular} and Subsection~\ref{SS:Regular}. We will show Theorem~\ref{mainSubnormalGeneralizedFitting} and complete the proof of Theorem~\ref{T:mainDetails}(d) in Subsection~\ref{SS:TheoremEProof}. Moreover, Theorem~\ref{T:IntersectionsSubnormal} is proved in Subsection~\ref{SS:IntersectionsSubnormal}. As mentioned in the introduction, our work serves also the dual purpose of revisiting Aschbacher's theory. In particular, we take great care not to use his results, but to reprove them. Essentially this is done in Subsection~\ref{SS:EFNewProofs} and partly also in Subsection~\ref{SS:IntersectionsSubnormal}.  Once the necessary results on fusion systems are reproved we will also complete the proof of Theorem~\ref{main} in Subsection~\ref{SS:TheoremAProof}. At the end we use the results from Subsection~\ref{SS:TheoremEProof} to show two lemmas that are needed  in the next section.

\subsection{Important one-to-one correspondences} \label{SS:TheoremEProof}

\begin{prop}\label{P:psihat}
Let $(\L,\Delta,S)$ be a regular locality over $\F$ and consider the map
\[\hat{\Psi}_\L\colon \{\H\colon \H\subn\L\}\rightarrow\{\E\colon \E\subn\F\},\H\mapsto\F_{S\cap\H}(\H).\] 
Then the following hold:
\begin{itemize}
\item [(a)] $\hat{\Psi}_\L$ is well-defined and bijective. Moreover, $\hat{\Psi}_\L$ restricts to the map $\Psi_\L$ from Theorem~\ref{mainStepIV}.
\item [(b)] If $\E_1$ and $\E_2$ are subnormal subsystems of $\F$ with $\E_1\subn\E_2$, then $\hat{\Psi}_\L^{-1}(\E_1)\subn\hat{\Psi}_\L^{-1}(\E_2)$.
\item [(c)] Suppose $\H_1$ and $\H_2$ are partial subnormal subgroups of $\L$ with $\H_1\subseteq\H_2$. Then $\hat{\Psi}_\L(\H_1)\subn\hat{\Psi}_\L(\H_2)$. If $\H_1\unlhd\H_2$, then $\hat{\Psi}_\L(\H_1)\unlhd\hat{\Psi}_\L(\H_2)$.
\end{itemize} 
\end{prop}

\begin{proof}
We will use throughout that, by Theorem~\ref{mainStepIV}(a), the map $\Psi_\L\colon\fN(\L)\rightarrow\fN(\F),\N\mapsto \F_{S\cap\N}(\N)$ is well-defined and a bijection. Moreover, we will use that every partial subnormal subgroup $\H$ of $\L$ is a regular locality over $\F_{\H\cap S}(\H)$ by Corollary~\ref{C:RegularSubnormal}. 

\textbf{(a)} Using induction on $|\L|$ one verifies easily that $\hat{\Psi}_\L$ is well-defined and surjective. Clearly $\hat{\Psi}_\L$ restricts to $\Psi_\L$.

\smallskip

Suppose now that $\H_1$ and $\H_2$ are partial subnormal subgroups of $\L$ with $\F_{S\cap \H_1}(\H_1)=\F_{S\cap \H_2}(\H_2)$. Then in particular, $T:=\H_1\cap S=\H_2\cap S$. We want to show that $\H_1=\H_2$. Assume that this is false and $\L$ is a minimal counterexample. As $\H_1\neq\H_2$, there exists $i\in\{1,2\}$ such that $\H_i\neq \L$. Suppose without loss of generality that $\H_1\neq \L$. Then there exists $\tilde{\H}\unlhd \L$ with $\H_1\subn\tilde{\H}\neq \L$. Note that $T\subseteq \H_1\cap\H_2\subseteq \tilde{\H}\cap\H_2\unlhd \H_2$ and thus $O^{p^\prime}(\H_2)\subseteq \H_2\cap\tilde{\H}\subseteq\tilde{\H}$. Notice that $O^{p^\prime}(\H_2)$ is subnormal in $\L$ and thus, by Lemma~\ref{L:SubnormalBasic}(c), also in $\tilde{\H}$. Clearly $O^{p^\prime}(\H_1)$ is also subnormal in $\tilde{\H}$. As $\H_1$ and $\H_2$ are regular localities over $\E$, it follows from Proposition~\ref{Oupperp}(c) that $\F_T(O^{p^\prime}(\H_1))=O^{p^\prime}(\E)=\F_T(O^{p^\prime}(\H_2))$. As $|\tilde{\H}|<|\L|$ and $\L$ was assumed to be a minimal counterexample, it follows that 
\begin{equation*}
 O^{p^\prime}(\H_1)=O^{p^\prime}(\H_2).
\end{equation*}
By \cite[Lemma~11.14]{Henke:Regular}, $\Comp(\H_i)=\Comp(O^{p^\prime}(\H_i))$ for $i=1,2$. Hence  
\[\Comp(\H_1)=\Comp(\H_2)\mbox{ and }E(\H_1)=E(\H_2).\]
Set 
\[T_0:=E(\H_1)\cap T=E(\H_2)\cap T.\]
By the Frattini Lemma  \cite[Corollary~3.11]{Chermak:2015}, we have $\H_i=E(\H_i)N_{\H_i}(T_0)$ for $i=1,2$. Thus it remains to show that $N_{\H_1}(T_0)=N_{\H_2}(T_0)$. 

\smallskip

It follows from Lemma~\ref{L:SubnormalBasic}(a) that $\Comp(\H_1)\subseteq\Comp(\L)$. Write $\M$ for the product of the components of $\L$ which are not in $\Comp(\H_1)=\Comp(\H_2)$. Recall that $E(\L)$ can be described as the product of components of $\L$; moreover every component and every product of components is normal in $F^*(\L)$ (cf. Subsection~\ref{SS:Regular} and \cite[Proposition~11.7, Definition~11.8, Theorem~11.18(a)]{Henke:Regular}). Hence, using Theorem~\ref{T:ProductsPartialNormal}, one sees that $E(\L)=E(\H_1)\M$ and 
\[S_0:=E(\L)\cap S=T_0(\M\cap S).\]
By \cite[Lemma~4.5, Theorem~11.18(c)]{Henke:Regular}, the product $\H_i\M$ is a central product of $\H_i$ and $\M$ for $i=1,2$. In particular, by \cite[Lemma~4.8]{Henke:Regular}, we have $\H_i\subseteq C_\L(\M)$ and so  $N_{\H_i}(T_0)\subseteq N_{\H_i}(S_0)$. As $E(\H_i)\unlhd\H_i$ it follows $N_{\H_i}(T_0)=N_{\H_i}(S_0)$ for $i=1,2$.

\smallskip

By \cite[Corollary~11.10]{Henke:Regular}, $S_0\in\delta(\F)=\Delta$, i.e. 
\[G:=N_\L(S_0)\] 
is a subgroup of $\L$ which is a group of characteristic $p$. As $\H_i\subn\L$, it follows from Lemma~\ref{L:SubnormalBasic}(c) that $N_{\H_i}(T_0)=N_{\H_i}(S_0)$ is a subnormal subgroup of $G$ for $i=1,2$. Since $(\H_i,\delta(\E),T)$ is a regular locality over $\E$ for $i=1,2$ and $T_0\in\delta(\E)$ by \cite[Corollary~11.10]{Henke:Regular}, it follows from Lemma~\ref{L:LocalitiesPropg}(c) that $\F_T(N_{\H_1}(T_0))=N_\E(T_0)=\F_T(N_{\H_2}(T_0))$. So Lemma~\ref{ModelLemma} yields $N_{\H_1}(T_0)=N_{\H_2}(T_0)$. As argued above this implies $\H_1=\H_2$. This completes the proof of (a).

\smallskip

\textbf{(b)} Assume now that $\E_1$ and $\E_2$ are subnormal subsystems of $\F$ with $\E_1\subn\E_2$. Then
\[\H_2:=\hat{\Psi}_\L^{-1}(\E_2)\subn\L\]
and $\F_{S\cap\H_2}(\H_2)=\E_2$, i.e. $\H_2$ is a regular locality over $\E_2$ by Corollary~\ref{C:RegularSubnormal}. Hence,
\[\hat{\Psi}_{\H_2}\colon\{\H\colon \H\subn\H_2\}\longrightarrow\{\E\colon\E\subn\E_2\},\H\mapsto\F_{S\cap\H}(\H)\]
is a well-defined bijection. In particular, $\H_1:=\hat{\Psi}_{\H_2}^{-1}(\E_1)\subn\H_2\subn\L$ and thus $\H_1\subn\L$. Moreover, $\hat{\Psi}_\L(\H_1)=\F_{S\cap\H_1}(\H_1)=\hat{\Psi}_{\H_2}(\H_1)=\E_1$. Hence,
\[\hat{\Psi}_\L^{-1}(\E_1)=\H_1\subn\H_2=\hat{\Psi}_\L^{-1}(\E_2).\]
This proves (b).

\smallskip

\textbf{(c)} Let $\H_1$ and $\H_2$ be arbitrary partial subnormal subgroups of $\L$ with $\H_1\subseteq\H_2$. Then again, by Corollary~\ref{C:RegularSubnormal}, $\H_2$ is a regular locality over $\E_2:=\hat{\Psi}_\L(\H_2)$. Moreover, $\H_1\subn\H_2$ by Lemma~\ref{L:SubnormalBasic}(c). Thus, $\hat{\Psi}_{\H_2}$ is defined and $\hat{\Psi}_\L(\H_1)=\F_{S\cap\H_1}(\H_1)=\hat{\Psi}_{\H_2}(\H_1)\subn\E_2=\hat{\Psi}_\L(\H_2)$. If $\H_1\unlhd\H_2$, then by (a), $\hat{\Psi}_\L(\H_1)=\hat{\Psi}_{\H_2}(\H_1)=\Psi_{\H_2}(\H_1)\unlhd\E_2$. This proves (c). 
\end{proof}

The next definition follows Aschbacher \cite[p.3]{Aschbacher:2011}.

\begin{definition}\label{D:ComponentsEFFstarF}
\begin{itemize}
\item A \emph{component} of $\F$ is a subnormal subsystem which is quasisimple. By $\Comp(\F)$ we denote the set of components of $\F$. 
\item Let $\E_1,\E_2,\dots,\E_n$ be the normal subsystems of $\F$ containing every component of $\F$. Building on Theorem~\ref{T:IntersectionsI}, define $E(\F)=\bigwedge_{i=1}^n\E_i$ to be the largest normal subsystem of $\F$ which is normal in $\E_i$ for all $i=1,\dots,n$.
\item Set $F^*(\F):=E(\F)O_p(\F)$ (with the product defined as in Definition~\ref{D:Product}).    
\end{itemize}
\end{definition}

\begin{lemma}\label{GeneralizedFitting}
Let $(\L,\Delta,S)$ be a regular locality over $\F$ and let $\hat{\Psi}_\L$ be the map from Proposition~\ref{P:psihat}. Then the following hold.
\begin{itemize}
 \item[(a)] The map $\hat{\Psi}_\L$ from (a) induces a bijection from the set of components of $\L$ to the set of components of $\F$.
 \item[(b)] $\Psi_\L(E(\L))=\F_{S\cap E(\L)}(E(\L))=E(\F)$.
 \item[(c)] $E(\F)\unlhd C_\F(O_p(\F))$ and 
\[\Psi_\L(F^*(\L))=\F_{S\cap F^*(\L)}(F^*(\L))=F^*(\F)=E(\F)*\F_{O_p(\F)}(O_p(\F)).\]
\end{itemize}
\end{lemma}

\begin{proof}
We will use throughout that, by Proposition~\ref{P:psihat}, $\hat{\Psi}_\L$ is bijective  and restricts to the bijection $\Psi_\L\colon \fN(\L)\rightarrow\fN(\F)$ from Theorem~\ref{mainStepIV}.

\smallskip

\textbf{(a)} Part (a) follows from $\hat{\Psi}_\L$ being bijective and Proposition~\ref{SimpleQuasisimpleTranslate}(c) in combination with Corollary~\ref{C:RegularSubnormal}. 

\smallskip

\textbf{(b)} We will use the characterization of $E(\L)$ as the product of components of $\L$ throughout (cf. Subsection~\ref{SS:Regular}). 
As $E(\L)\unlhd \L$, we have $\E:=\Psi_\L(E(\L))=\F_{S\cap E(\L)}(E(\L))\unlhd \F$. Since every component of $\L$ is contained in $E(\L)$, it follows from (a) and the fact that $\hat{\Psi}_\L$ is clearly inclusion-preserving that $\E$ contains every component of $\F$. Hence, by definition of $E(\F)$, we have \[E(\F)\unlhd \E.\]
To prove the converse inclusion set $\M:=\Psi_\L^{-1}(E(\F))$ and let $\K$ be a component of $\L$. By (a), $\F_{S\cap\K}(\K)$ is a component of $\F$. Hence, it follows from the definition of $E(\F)$ and from Theorem~\ref{T:IntersectionsI}(a) that $E(\F)$ is a subsystem over a subgroup of $S$, which contains $\K\cap S$. As $E(\F)=\Psi_\L(\M)=\F_{S\cap\M}(\M)$, it follows that $\K\cap S\subseteq \M\cap S$ and so $\K\cap S\subseteq\K\cap\M\unlhd\K$. Since $\K$ is quasisimple, we have $\K=O^{p^\prime}(\K)$ by Lemma~\ref{L:QuasisimpleOpprime}. This implies $\K=\K\cap\M\subseteq \M$. As $\K$ was arbitrary, this shows $E(\L)\subseteq \M$ and thus $\E=\Psi_\L(E(\L))\subseteq \Psi_\L(\M)=E(\F)$ as $\Psi_\L$ is inclusion-preserving by Theorem~\ref{mainStepIV}. Therefore $\E=E(\F)$ and (b) holds.

\smallskip

\textbf{(c)} By \cite[Proposition~5]{Henke:2015} and Lemma~\ref{L:OpL}, $R:=O_p(\L)=O_p(\F)$, and by Theorem~\ref{mainStepIV}(b), $\Psi_\L(F^*(\L))=\F_{S\cap F^*(\L)}(F^*(\L))$. Moreover, \cite[Lemma~11.9]{Henke:Regular} states that  $F^*(\L)=E(\L)R$ and  \cite[Lemma~11.5]{Henke:Regular} gives $E(\L)\subseteq R^\perp=C_\L(R)$. The latter property implies by \cite[Lemma~3.5]{Henke:Regular} that $E(\L)$ commutes with $R$. By Theorem~\ref{mainStepIV}(f), we have $\Psi_\L(R)=\F_R(R)$. Hence, it follows from Proposition~\ref{P:N1N2Perpendicular} and from (b) that $E(\F)$ and $\F_R(R)$ centralize each other and  
\[\F_{S\cap F^*(\L)}(F^*(\L))=\Psi_\L(F^*(\L))=\Psi_\L(E(\L))*\Psi_\L(R)=E(\F)*\F_R(R).\]
In particular, $E(\F)=\Psi_\L(E(\L))\subseteq C_\F(R)$. So by Lemma~\ref{L:ERequalsEstarR}, $E(\F)*\F_R(R)=E(\F)R=F^*(\F)$. It follows moreover from  Proposition~\ref{P:DEcentralize} that $E(\F)\unlhd C_\F(\F_R(R))$. One observes easily that $\F_R(R)$ and $C_\F(R)$ centralize each other in $\F$, so by \cite[Theorem~2]{Henke:2018}, $C_\F(R)\subseteq C_\F(\F_R(R))$. By definition of $C_\F(\F_R(R))$, the converse inclusion holds as well. Hence $E(\F)\unlhd C_\F(\F_R(R))=C_\F(R)$. This proves (c).
\end{proof}

\begin{proof}[Proof of Theorem~\ref{mainSubnormalGeneralizedFitting}]
The first part is a reformulation of Proposition~\ref{P:psihat}. The statement about components is shown in Theorem~\ref{GeneralizedFitting}. 
\end{proof}

\begin{cor}\label{C:ELProper}
 Let $(\L,\Delta,S)$ be a proper locality over $\F$. Then 
\[\Psi_\L(E(\L))=E(\F)\mbox{ and }\Psi_\L(F^*(\L))=F^*(\F).\]  
\end{cor}

\begin{proof}
By Theorem~\ref{T:VaryObjects}(d) there exists a subcentric locality $(\L^s,\F^s,S)$ over $\F$ with $\L^s|_\Delta=\L$. Then $\L_\delta:=\L^s|_{\delta(\F)}$ is a regular locality. Thus, by Proposition~\ref{GeneralizedFitting}(b), 
\[\Psi_{\L_\delta}(E(\L_\delta))=E(\F)\mbox{ and }\Psi_{\L_\delta}(F^*(\L_\delta))=F^*(\F).\]
If $\Phi_{\L^s,\L}$ and $\Phi_{\L^s,\L_\delta}$ are defined as in Theorem~\ref{T:VaryObjects}(b), then it follows from \cite[Lemma~12.2]{Henke:Regular} that 
\[\Phi_{\L^s,\L_\delta}(E(\L^s))=E(\L_\delta)\mbox{ and }\Phi_{\L^s,\L}(E(\L^s))=E(\L).\]
Similarly, by \cite[Lemma~9.18(d)]{Henke:Regular},
\[\Phi_{\L^s,\L_\delta}(F^*(\L^s))=F^*(\L_\delta)\mbox{ and }\Phi_{\L^s,\L}(F^*(\L^s))=F^*(\L).\]
By Theorem~\ref{mainStepIV}(c), $\Psi_\L\circ\Phi_{\L^s,\L}=\Psi_{\L^s}=\Psi_{\L_\delta}\circ \Phi_{\L^s,\L_\delta}$. This implies
\[\Psi_\L(E(\L))=(\Psi_{\L_\delta}\circ \Phi_{\L^s,\L_\delta}\circ \Phi_{\L^s,\L}^{-1})(E(\L))=(\Psi_{\L_\delta}\circ \Phi_{\L^s,\L_\delta})(E(\L^s))=\Psi_{\L_\delta}(E(\L_\delta))=E(\F).\]
Similarly, it follows $\Psi_\L(F^*(\L))=\Psi_{\L_\delta}(F^*(\L_\delta))=F^*(\F)$. 
\end{proof}

\begin{proof}[Proof of Theorem~\ref{T:mainDetails}]
Parts (a) follows from Theorem~\ref{mainStepIV}(f) and \cite[Proposition~5]{Henke:2015}. Part (b) was proved in Proposition~\ref{Oupperp}(c), part (c) is Proposition~\ref{P:NperpCFE}, and part (d) is Corollary~\ref{C:ELProper}. 
\end{proof}

\subsection{Intersections of partial subnormal subgroups and of subnormal subsystems} \label{SS:IntersectionsSubnormal}

In this subsection and the next we demonstrate that many results about fusion systems follow easily from results about localities and the one-to-one correspondences shown so far. We start by revisiting the notion of a ``subnormal intersection'' of subnormal subsystems of $\F$. So in particular we prove Theorem~\ref{T:IntersectionsSubnormal}. Apart from the existence of a regular locality over $\F$, this needs only Proposition~\ref{P:psihat} and the elementary properties shown in Lemma~\ref{L:SubnormalBasic}. The arguments and results are very similar to the ones in Subsection~\ref{SS:IntersectionsNormal}. The existence of a ``subnormal intersection'' of subnormal subsystems of a saturated fusion system was first shown by Aschbacher \cite[7.2.2]{Aschbacher:2011}. However, we obtain a more precise characterization of this subsystem and we show how to translate between subnormal intersections in regular localities and in fusion systems.

\begin{prop}\label{P:IntersectionsSubnormal}
Let $(\L,\Delta,S)$ be a regular locality over $\F$ and consider the map $\hat{\Psi}_\L$ from Theorem~\ref{mainSubnormalGeneralizedFitting} or Proposition~\ref{P:psihat}. Let $I$ be an index set and suppose for every $i\in I$ we are given a partial subnormal subgroup $\H_i$ of $\L$. Set 
\[T_i:=\H_i\cap S\mbox{ and }\E_i:=\hat{\Psi}_\L(\H_i)=\F_{T_i}(\H_i) \mbox{ for all }i\in I\] 
and $\M:=\bigcap_{i\in I}\H_i$. Then the following hold:
\begin{itemize}
 \item [(a)] $\M$ is a partial subnormal subgroup of $\L$, and $\hat{\Psi}_\L(\M)$ is a subnormal subsystem of $\F$ over $\M\cap S=\bigcap_{i\in I}T_i$ contained in $\bigcap_{i\in I}\E_i$. 
 \item [(b)] $\hat{\Psi}_\L(\M)$ is the largest subsystem of $\F$ which is subnormal in $\E_i$ for all $i\in I$. Indeed, if $\m{D}\subn\F$ such that $\m{D}\subn\E_i$ for all $i\in I$, then $\m{D}\subn\hat{\Psi}_\L(\M)$.
\end{itemize}
\end{prop}

\begin{proof}
As $\L$ is finite, there are only finitely many partial subnormal subgroups of $\L$. Hence, we may assume that $I$ is finite. It follows then from Lemma~\ref{L:SubnormalBasic}(c) that $\M$ is subnormal in $\L$. So $\hat{\Psi}_\L(\M)$ is defined and  
\[\E:=\hat{\Psi}_\L(\M):=\F_{S\cap\M}(\M)\subn\F.\]
\indent \textbf{(a)} Observe that $\E=\F_{S\cap\M}(\M)$ is a fusion system over $S\cap\M=\bigcap_{i\in I}T_i$. As $\M\subseteq\H_i$ it follows from Proposition~\ref{P:psihat}(c) that $\E$ is subnormal in $\E_i$ for all $i\in I$. In particular, $\E\subseteq \bigcap_{i\in I}\E_i$. This proves (a). 

\smallskip

\textbf{(b)} Fix now a subsystem $\m{D}$ of $\F$ with $\m{D}\subn\E_i$ for all $i\in I$. Then in particular $\m{D}\subn\F$. For (b) it is sufficient to prove that $\m{D}\subn\E$. Observe that 
\[\K:=\hat{\Psi}_\L^{-1}(\m{D})\subn\L.\]
Moreover, by Proposition~\ref{P:psihat}(b), we have $\K\subn\hat{\Psi}_\L^{-1}(\E_i)=\H_i$ for all $i\in I$ and so $\K\subseteq\bigcap_{i\in I}\H_i=\M$. Hence, by Proposition~\ref{P:psihat}(c), $\m{D}=\hat{\Psi}_\L(\K)\subn\hat{\Psi}_\L(\M)=\E$. This proves the assertion.
\end{proof}

We are now in a position to prove Theorem~\ref{T:IntersectionsSubnormal}. Indeed we prove the following theorem. Notice that parts (a),(b),(e) generalize Theorem~\ref{T:IntersectionsSubnormal}. The other way around, as $I$ can be assumed to be finite, these parts could be obtained from Theorem~\ref{T:IntersectionsSubnormal} by induction on $|I|$. 

\begin{theorem}\label{T:IntersectionsSubnormal0}
Let $\F$ be a saturated fusion system over $S$. Then for every family $(\E_i)_{i\in I}$ of subnormal subsystems of $\F$ there exists a subsystem $\bigwedge_{i\in I}\E_i$ (denoted by $\E_1\wedge\E_2\wedge\cdots\wedge\E_k$ if $I=\{1,2,\dots,k\}$) such that the following hold, whenever $I$ is an index set and $\E_i$ is a subnormal subsystems of $\F$ over $T_i$ for all $i\in I$:
\begin{itemize}
 \item [(a)] $\bigwedge_{i\in I}\E_i$ is a subnormal subsystem of $\F$ over $\bigcap_{i\in I}T_i$ contained in $\bigcap\E_i$. Moreover, it is the largest  subsystem of $\F$ that is subnormal in $\E_i$ for all $i\in I$.
 \item [(b)] Every subsystem of $\F$ which is subnormal in $\E_i$ for all $i\in I$ is also subnormal in $\bigwedge_{i\in I}\E_i$.
 \item [(c)] If $I$ is the disjoint union of subsets $I_1,\dots,I_k$, then 
\[\bigwedge_{i\in I}\E_i=\left(\bigwedge_{i\in I_1}\E_i\right)\wedge\left(\bigwedge_{i\in I_2}\E_i\right)\wedge\cdots \wedge \left(\bigwedge_{i\in I_k}\E_i\right).\]  
 \item [(d)] If $j\in I$ such that $\E_j\unlhd\F$, then $\bigwedge_{i\in I}\E_i$ is a normal subsystem of $\bigwedge_{i\in I\backslash\{j\}}\E_i$.
 \item [(e)] Let $(\L,\Delta,S)$ be a regular locality over $\F$. If $\hat{\Psi}_\L$ is the map from Theorem~\ref{mainSubnormalGeneralizedFitting} or Proposition~\ref{P:psihat}, then for any collection $\H_i$ ($i\in I$) of partial subnormal subgroups of $\L$, we have $\hat{\Psi}_\L(\bigcap_{i\in I}\H_i)=\bigwedge_{i\in I}\hat{\Psi}_\L(\H_i)$.
\end{itemize}
\end{theorem}

\begin{proof}
Fix a regular locality $(\L,\Delta,S)$ over $\F$ (which exists by Lemma~\ref{L:Regular10.4}), and consider the map $\hat{\Psi}_\L$ from Theorem~\ref{mainSubnormalGeneralizedFitting} and Proposition~\ref{P:psihat}. If $\E_i$ ($i\in I$) is a collection of subnormal subsystems of $\F$, set
\[\bigwedge_{i\in I}\E_i:=\hat{\Psi}_\L(\bigcap_{i\in I}\hat{\Psi}_\L^{-1}(\E_i)).\]
\indent \textbf{(a,b,c,e)} It follows from Proposition~\ref{P:IntersectionsSubnormal} that (a) and (b) hold. In particular, the definition of $\bigwedge_{i\in I}\E_i$ does not depend on the choice of the regular locality $(\L,\Delta,S)$. This implies that (e) is true by the definition above. Since taking the intersection of sets is an  associative operation, part (c) follows as well.

\smallskip

\textbf{(d)} For the proof of (d), part (c) allows us to assume without loss of generality that $I=\{1,2\}$ and $j=1$, i.e. $\E_1\unlhd\F$. We need to show that $\E_1\wedge\E_2\unlhd\E_2$. Recall from Proposition~\ref{P:psihat}(a) that $\hat{\Psi}_\L$ restricts to the bijection  $\Psi_\L\colon\fN(\L)\rightarrow\fN(\F)$ from Theorem~\ref{mainStepIV}. So $\H_1:=\hat{\Psi}_\L^{-1}(\E_1)=\Psi_\L^{-1}(\E_1)$ is a partial normal subgroup of $\L$. Setting $\H_2:=\hat{\Psi}_\L^{-1}(\E_2)$, one observes therefore easily that $\H_1\cap\H_2\unlhd\H_2$. Hence, by the last part of Proposition~\ref{P:psihat}(c), it follows $\E_1\wedge\E_2=\hat{\psi}_\L(\H_1\cap\H_2)\unlhd\hat{\psi}_\L(\H_2)=\E_2$. This completes the proof.
\end{proof}

\subsection{More proofs of theorems about fusion systems}\label{SS:EFNewProofs}  In this subsection we reprove some results about fusion systems which are due to Aschbacher \cite{Aschbacher:2011} and mainly concern components, the layer and the generalized Fitting subsystem of a fusion system. Recall that $\F$ is assumed to be a saturated fusion system. 

\begin{theorem}[E-balance, Aschbacher {\cite[Theorem~7]{Aschbacher:2011}}]\label{T:Ebalance}
Let $X\leq S$ be fully $\F$-normalized. Then $E(N_\F(X))\subseteq E(\F)$.
\end{theorem}

\begin{proof}
By Theorem~\ref{T:ProperLocalityExistence}, there exists a subcentric locality $(\L,\Delta,S)$ over $\F$. We define $\bN_{\L}(X)=N_{\L}(X)|_{N_\F(X)^s}$ as in \cite[Subsection~3.9]{Henke:Regular} and remark that $(\bN_{\L}(X),N_\F(X)^s,N_S(X))$ is a subcentric locality over $N_\F(X)$ by \cite[Lemma~9.13]{Henke:2015}. By \cite[Theorem~5]{Henke:Regular}, we have $E(\bN_{\L}(X))\subseteq E(\L)$. Hence, applying Corollary~\ref{C:ELProper} (combined with Theorem~\ref{mainStepIV}(b)) 
both to $\F$ and to $N_\F(X)$, we can conclude that
\[E(N_\F(X))=\F_{N_S(X)\cap E(\bN_{\L}(X))}(E(\bN_{\L}(X)))\subseteq \F_{S\cap E(\L)}(E(\L))=E(\F).\]  
\end{proof}

The following lemma can be easily proved directly, but we give a proof using localities.

\begin{lemma}\label{L:CFRNFR}
For every $R\in\F^f$, the centralizer $C_\F(R)$ is a normal subsystem of $N_\F(R)$.
\end{lemma}

\begin{proof}
Recall that $N_\F(R)$ is saturated by \cite[Theorem~I.5.5]{Aschbacher/Kessar/Oliver:2011}. Hence, replacing $\F$ by $N_\F(R)$ we may assume $R\unlhd\F$. By Theorem~\ref{T:ProperLocalityExistence}, we can choose a subcentric locality $(\L,\Delta,S)$ over $\F$. Then $R=N_\L(R)$ by  \cite[Proposition~5]{Henke:2015}.  Moreover, by Lemma~\ref{L:NFTSubcentricLocality}, $C_\L(R)\unlhd\L$ and $\F_{C_S(R)}(C_\L(R))=C_\F(R)$. Hence, if $\Psi_\L$ is the map from Theorem~\ref{mainStepIV}, then by part (b) of that theorem, $C_\F(R)=\Psi_\L(C_\L(R))\unlhd\F$.  
\end{proof}

By Lemma~\ref{L:Regular10.4}, there exists a regular locality over $\F$. Thus, we can work under the following assumption to prove properties of $\F$.

\smallskip

\textbf{For the remainder of this subsection $(\L,\Delta,S)$ is a regular locality over $\F$.}

\begin{lemma}\label{L:QuasisimpleFusion}
Suppose $\F$ is quasisimple. Then $\F=O^{p^\prime}(\F)$ and $S$ is non-abelian. Moreover, if $\E$ is subnormal in $\F$, then $\E=\F$ or $\E\subseteq \F_{Z(\F)}(Z(\F))$.
\end{lemma}

\begin{proof}
By Proposition~\ref{SimpleQuasisimpleTranslate}(c), $\L$ is quasisimple. Thus, by Lemma~\ref{L:QuasisimpleOpprime}, $\L=O^{p^\prime}(\L)$, $S$ is non-abelian and every partial subnormal subgroup of $\L$ is either equal to $\L$ or contained in $Z(\L)$.  Corollary~\ref{C:Oupperp} gives now $\F=O^{p^\prime}(\F)$. Recall from Proposition~\ref{P:psihat} that $\hat{\Psi}_\L$ is a bijection from the set of partial subnormal subgroups of $\L$ to the set of subnormal subsystems of $\F$ which takes $\H\subn\L$ to $\F_{S\cap\H}(\H)$ and thus $\L$ to $\F_S(\L)=\F$. So if $\E$ is subnormal in $\F$ with $\E\neq\F$, then there exists $\H\subn\L$ with $\E=\F_{S\cap\H}(\H)$ and $\H\neq\L$. The property stated above together with Proposition~\ref{SimpleQuasisimpleTranslate}(b) gives then $\H\subseteq Z(\L)=Z(\F)$ and thus $\E=\F_{S\cap\H}(\H)\subseteq \F_{Z(\F)}(Z(\F))$. 
\end{proof}

The next theorem is now relatively easy to prove. Similar results were stated before by Aschbacher \cite[9.6,9.8,9.9,9.13]{Aschbacher:2011}. Recall the definition of an internal central product from Definition~\ref{D:InternalCentralProduct}. It should be noted in this context that, by Lemma~\ref{L:InternalExternalCentralProduct}, our notion of an internal central product of fusion systems is closely related to Aschbacher's notion of a central product \cite[p.14]{Aschbacher:2011}.

\begin{theorem}\label{T:AschbacherEF}
The following hold:
\begin{itemize}
\item [(a)] $E(\F)$ and $F^*(\F)$ are characteristic subsystems of $\F$.
\item [(b)] $E(\F)=\C_1*\C_2*\cdots *\C_n$ is an internal central product of the components $\C_1,\dots,\C_n$ of $\F$ (listed in arbitrary order).
\item [(c)] $\Comp(E(\F))=\Comp(F^*(\F))=\Comp(\F)$ and $E(\F)=F^*(E(\F))=E(E(\F))$.
\item [(d)] $E(\F)=O^p(E(\F))=O^p(F^*(\F))=O^{p^\prime}(E(\F))$.
\item [(e)] $E(\F)$ is a normal subsystem of $C_\F(O_p(\F))$. Moreover, $F^*(\F)=E(\F)*\F_{O_p(\F)}(O_p(\F))$ is an internal central product of $E(\F)$ and $\F_{O_p(\F)}(O_p(\F))$; thus $F^*(\F)$ is also an internal central product of $\F_{O_p(\F)}(O_p(\F))$ and the components of $\F$ (listed in arbitrary order).
\item [(f)] If $\C_1,\C_2,\dots,\C_k$ are pairwise distinct  components of $\F$, then $\C_1,\dots,\C_k$ centralize each other in $E(\F)$. Moreover,  $\C_1*\C_2*\cdots *\C_k$ is a normal subsystem of $E(\F)$ and of $F^*(\F)$, which is an internal central product of $\C_1,\dots,\C_k$ with 
\[O^p(\C_1*\C_2*\cdots *\C_k)=\C_1*\C_2*\cdots *\C_k=O^{p^\prime}(\C_1*\C_2*\cdots *\C_k).\] 
\item[(g)] $C_\F(E(\F))$ is constrained. 
\end{itemize}
\end{theorem}

\begin{proof}
We will use throughout that $F^*(\F)=\Psi_\L(F^*(\L))$ and $E(\F)=\Psi_\L(E(\L))$ by Lemma~\ref{GeneralizedFitting}(b),(c). Let  $\hat{\Psi}_\L$ always be the map from Proposition~\ref{P:psihat}. We will also use  that, by \cite[Proposition~11.7]{Henke:Regular},
\begin{equation}\label{E:Comp1}
\Comp(F^*(\L))=\Comp(\L)\mbox{ and }\K\unlhd F^*(\L)\mbox{ for every $\K\in\Comp(\L)$}.
\end{equation}
Moreover, we use that $E(\L)$ is the product of the components of $\L$ (cf. \cite[Definition~11.8, Theorem~11.18(a)]{Henke:Regular}). By Lemma~\ref{L:SubnormalBasic}(a),(c), this yields in particular that 
\begin{equation}\label{E:Comp2}
\Comp(E(\L))=\Comp(\L)\mbox{ and }E(E(\L))=E(\L).
\end{equation}

\smallskip

\textbf{(a)} By \cite[Lemma~9.19, Lemma~11.12]{Henke:Regular}, $F^*(\L)$ and $E(\L)$ are $\Aut(\L,S)$-invariant. Hence, by Proposition~\ref{P:CharacteristicSubsystems}, $F^*(\F)=\Psi_\L(F^*(\L))$ and $E(\F)=\Psi_\L(E(\L))$ are characteristic subsystems of $\F$.  This proves (a). Alternatively, part (a) can easily be checked directly by seeing that an automorphism of $\F$ takes components to components and normalizes $O_p(\F)$. 

\smallskip

\textbf{(c)} By definition of the map $\hat{\Psi}_\L$ from Proposition~\ref{P:psihat}, $\hat{\Psi}_\L$ restricts to $\hat{\Psi}_{E(\L)}$ and to $\hat{\Psi}_{F^*(\L)}$. Moreover, by Lemma~\ref{L:GeneralizedFitting}(a), $\hat{\Psi}_\L$ induces a bijection between $\Comp(\L)$ and $\Comp(\F)$; similarly $\hat{\Psi}_\L$ induces a bijection between $\Comp(F^*(\L))$ and $\Comp(F^*(\F))$ and between $\Comp(E(\L))$ and  $\Comp(E(\F))$. Hence, it follows from \eqref{E:Comp1} and \eqref{E:Comp2} that 
\[\Comp(\F)=\Comp(E(\F))=\Comp(F^*(\F)).\] 
By \eqref{E:Comp2} and  Lemma~\ref{GeneralizedFitting}(b),(c) (applied also with $E(\L)$ in place of $\L$) $E(\F)=\F_{S\cap E(\L)}(E(\L))=\F_{S\cap E(E(L))}(E(E(\L)))=E(\F_{S\cap E(\L)}(E(\L)))=E(E(\F))$. In particular $E(\F)=E(E(\F))\subseteq F^*(E(\F))\subseteq E(\F)$, i.e. equality holds and thus (c) is true.

\smallskip

\textbf{(b,d,f)} If $\K_1,\dots,\K_r$ are pairwise distinct components of $\L$, then it follows from \cite[Theorem~11.18(e)]{Henke:Regular} that $\K_1,\dots,\K_r$ commute pairwise. As mentioned above, $\hat{\Psi}_\L$ restricts to $\hat{\Psi}_{F^*(\L)}$ and $\hat{\Psi}_{E(\L)}$. Thus, by Proposition~\ref{P:psihat}(a), $\hat{\Psi}_\L$ restricts also to $\Psi_{F^*(\L)}$ and $\Psi_{E(\L)}$. Recall that, by \eqref{E:Comp1}, $\K_i\unlhd F^*(\L)$ and therefore also $\K_i\unlhd E(\L)$. So in particular,
\[\C_i:=\hat{\Psi}_\L(\K_i)=\Psi_{F^*(\L)}(\K_i)=\Psi_{E(\L)}(\K_i).\]
Set $\M:=\K_1\K_2\cdots \K_r$. Then $\M\unlhd F^*(\L)$ and $\M\unlhd E(\L)$ by Theorem~\ref{T:ProductsPartialNormal}(a). Applying  Proposition~\ref{P:N1N2Perpendicular} twice (once with $F^*(\L)$ and once with $E(\L)$ in place of $\L$), one sees that $\C_1,\dots,\C_r$ centralize each other in $E(\F)$ and that
\[\Psi_{F^*(\L)}(\M)=\C_1*\C_2*\cdots *\C_r=\Psi_{E(\L)}(\M)\]
is a normal subsystem of $F^*(\F)$ and $E(\F)$. Clearly $\C_1*\C_2*\cdots *\C_r$ is a central product of $\C_1,\dots,\C_r$. By \cite[Lemma~10.18, Theorem~11.11]{Henke:Regular}, $O^p_{F^*(\L)}(\M)=\M=O^{p^\prime}_{F^*(\L)}(\M)$. So by Proposition~\ref{Oupperp}(c),
\[O^p(\C_1*\C_2*\cdots\C_r)=O^p(\Psi_{F^*(\L)}(\M))=\Psi_{F^*(\L)}(O^p_{F^*(\L)}(\M))=\Psi_{F^*(\L)}(\M)=\C_1*\C_2*\cdots *\C_r\]
and similarly $O^{p^\prime}(\C_1*\C_2*\cdots *\C_r)=\C_1*\C_2*\cdots *\C_r$. It follows now from Proposition~\ref{GeneralizedFitting}(a) that (f) holds. As $E(\L)$ is the product of the components of $\L$, Proposition~\ref{GeneralizedFitting}(a) implies also that $E(\F)=\Psi_\L(E(\L))=\Psi_{F^*(\L)}(E(\L))$ is an internal central product of the components of $\F$, i.e. (b) holds. In particular, we have $E(\F)=O^p(E(\F))=O^{p^\prime}(E(\F))$. As $F^*(\F)=E(\F)O_p(\F)$ by definition, it follows from \cite[Theorem~1]{Henke:2013} that $O^p(F^*(\F))=O^p(E(\F))=E(\F)$ and so (d) holds. 

\smallskip

\textbf{(e)} By Lemma~\ref{GeneralizedFitting}(c), $E(\F)\unlhd C_\F(O_p(\F))$ and $F^*(\F)=E(\F)*\F_{O_p(\F)}(O_p(\F))$. Hence, the assertion follows from (b), Lemma~\ref{L:StarProdBasic}(c) and Lemma~\ref{L:StarAssociativeHelp}.

\smallskip

\textbf{(g)} It follows from \eqref{E:Comp2} and \cite[Lemma~11.16]{Henke:Regular} that $\Comp(E(\L)^\perp)=\emptyset$. As $E(\L)^\perp$ is a regular locality over $\F_{S\cap E(\L)^\perp}(E(\L)^\perp)$ by Theorem~\ref{T:mainRegularPartialNormal}(a), it follows from \cite[Lemma~11.6]{Henke:Regular} that $\F_{S\cap E(\L)^\perp}(E(\L)^\perp)$ is constrained. Moreover, by Theorem~\ref{mainStepIV}(b), $\Psi_\L(E(\L)^\perp)=\F_{S\cap E(\L)^\perp}(E(\L)^\perp)$. As $\Psi_\L(E(\L))=E(\F)$, it follows from Proposition~\ref{P:NperpCFE} that $C_\F(E(\F))=\Psi_\L(E(\L)^\perp)$ is constrained. So (g) holds.
\end{proof}

Part (a) of the following lemma was first proved by Aschbacher \cite[9.11]{Aschbacher:2011}. If $\E$ is a subsystem of $\F$ over $T$, then we write $O_p(\F)\subseteq\E$ to indicate that $O_p(\F)\leq T$.

\begin{lemma}\label{L:GeneralizedFitting}
\begin{itemize}
\item [(a)] $C_\F(F^*(\F))=\F_{Z(F^*(\F))}(Z(F^*(\F)))$.
\item [(b)] Set $\mathfrak{E}:=\{\E\unlhd\F\colon O_p(\F)\subseteq\E,\;C_\F(\E)\subseteq\E\}$. Then $F^*(\F)=\bigwedge_{\E\in\mathfrak{E}}\E$.  
\end{itemize}
\end{lemma}

\begin{proof}
Recall Definition~\ref{D:FstarELLocalities} and write $\mathfrak{N}$ for the set of partial normal subgroups $\N$ of $\L$ with $\N^\perp\subseteq\N$ and $O_p(\L)\subseteq\N$. By definition, $F^*(\L)=\bigcap_{\N\in\mathfrak{N}}\N$ and by \cite[Theorem~2]{Henke:Regular}, $F^*(\L)\in\mathfrak{N}$. By \cite[Proposition~5]{Henke:2015} and Lemma~\ref{L:OpL}, $O_p(\F)=O_p(\L)$. Hence, it follows from Corollary~\ref{C:CentricPartialNormal} that $\Psi_\L$ induces a bijection from $\mathfrak{N}$ to $\mathfrak{E}$ and that $C_\F(\E)=\F_{Z(\E)}(Z(\E))$ for every $\E\in\mathfrak{E}$. Together with Lemma~\ref{GeneralizedFitting}(c) this yields that $F^*(\F)=\Psi_\L(F^*(\L))\in\mathfrak{E}$ and (a) holds. Moreover, by Theorem~\ref{T:IntersectionsI}(d), 
\[\Psi_\L(F^*(\L))=\Psi_\L(\bigcap_{\N\in\mathfrak{N}}\N)=\bigwedge_{\N\in\mathfrak{N}}\Psi_\L(\N)=\bigwedge_{\E\in\mathfrak{E}}\E.\]
So (b) holds as well.
\end{proof}

\begin{lemma}\label{L:CharacteristicNormal}
Let $\E\unlhd\F$ and let $\m{D}$ be characteristic in $\E$. Then $\m{D}\unlhd\F$.
\end{lemma}

\begin{proof}
We use Theorem~\ref{mainStepIV} throughout, in particular part (b) of that theorem. Let $\E$ and $\m{D}$ be subsystems over $T$ and $R$ respectively. Pick $\N\unlhd\L$ with $\F_{S\cap \N}(\N)=\Psi_\L(\N)=\E$. Hence, by Theorem~\ref{T:mainRegularPartialNormal}(a), $(\N,\delta(\E),T)$ is a regular locality over $\E$. As $\m{D}\unlhd\E$, there exists $\M\unlhd\N$ with $\F_{S\cap\M}(\M)=\Psi_\N(\M)=\m{D}$. By Proposition~\ref{P:CharacteristicSubsystems}(b), $\M$ is $\Aut(\N,T)$-invariant. Therefore, it follows from \cite[Lemma~10.17]{Henke:Regular} applied with $\mathbb{K}=\{\M\}$ that $\M\unlhd\L$. Thus, $\m{D}=\F_{S\cap\M}(\M)=\Psi_\L(\M)\unlhd\F$.
\end{proof}

\begin{lemma}\label{L:EEtimeECFEEF}
 Let $\E\unlhd\F$. Then the following hold:
\begin{itemize}
\item [(a)] $E(\E)$, $F^*(\E)$ and $E(C_\F(\E))$ are normal in $\F$.
\item [(b)] The set of components of $\F$ is the disjoint union of $\Comp(\E)$ and $\Comp(C_\F(\E))$.
\item [(c)] $E(\E)$, $E(C_\F(\E))$ and $\F_{O_p(\F)}(O_p(\F))$ centralize each other, $E(\F)=E(\E)*E(C_\F(\E))$ and 
\[F^*(\F)=E(\F)*\F_{O_p(\F)}(O_p(\F))=E(\E)*E(C_\F(\E))*\F_{O_p(\F)}(O_p(\F)).\]
\end{itemize}
\end{lemma}

\begin{proof}
\textbf{(a)} Part (a) follows from Theorem~\ref{T:AschbacherEF}(a) and Lemma~\ref{L:CharacteristicNormal}; alternatively, part (a) can be concluded from \cite[Theorem~10.16(d), Lemma~11.13]{Henke:Regular}, Theorem~\ref{mainStepIV} and Lemma~\ref{GeneralizedFitting}(b),(c). 

\smallskip

\textbf{(b)} We use now Theorem~\ref{mainStepIV}, in particular part (b) of that theorem. Fix $\N\unlhd\L$ with $\F_{S\cap\N}(\N)=\Psi_\L(\N)=\E$. Then $\F_{S\cap\N^\perp}(\N^\perp)=\Psi_\L(\N^\perp)=C_\F(\E)$ by Proposition~\ref{P:NperpCFE}. So by Theorem~\ref{T:mainRegularPartialNormal}(a), $\N$ is a regular locality over $\E$ and $\N^\perp$ is a regular locality over $C_\F(\E)$. The map $\hat{\Psi}_\L\colon \{\H\colon\H\subn\L\}\rightarrow \{\m{D}\colon\m{D}\subn\F\},\H\mapsto \F_{S\cap\H}(\H)$ is by Proposition~\ref{P:psihat} a bijection, which then clearly restricts to $\hat{\Psi}_\N$ and $\hat{\Psi}_{\N^\perp}$. Now it follows Lemma~\ref{GeneralizedFitting}(a) that $\hat{\Psi}_\L$ induces a bijection $\Comp(\L)\rightarrow\Comp(\F)$, a bijection $\Comp(\N)\rightarrow\Comp(\E)$ and a bijection $\Comp(\N^\perp)\rightarrow \Comp(C_\F(\E))$.  By \cite[Lemma
11.16]{Henke:Regular}, $\Comp(\L)$ is the disjoint union of $\Comp(\N)$ and $\Comp(\N^\perp)$. Hence (b) follows.

\smallskip

\textbf{(c)} This follows from part (b), Lemma~\ref{L:StarProdBasic}(c), Lemma~\ref{L:StarAssociativeHelp}, Lemma~\ref{L:CharacterizeF1starF2}(a) and Theorem~\ref{T:AschbacherEF}(b),(e).
\end{proof}

To formulate the next lemma, it will be convenient to use the following notation.

\begin{notation}\label{N:PcapE}
If $\E$ is a subsystem of $\F$ over $T$ and $P\leq S$, then set $P\cap\E:=P\cap T$. In particular, $S\cap\E:=T$.  
\end{notation}

\begin{lemma}\label{L:SubnormalComponents}
Suppose $\E$ is a subnormal subsystem of $\F$. Then the following hold:
\begin{itemize}
 \item[(a)] $\Comp(\E)=\{\C\in\Comp(\F)\colon\C\subn\E\}=\{\C\in\Comp(\F)\colon \C\subseteq\E\}=\{\C\in\Comp(\F)\colon S\cap\C\leq S\cap\E\}$.
 \item[(b)] If $\C_1,\dots,\C_k\subseteq\Comp(\F)\backslash\Comp(\E)$ are pairwise distinct, then $\E,\C_1,\C_2,\dots,\C_k$ centralize each other in $\F$. Moreover, $\E*\C_1*\C_2*\cdots *\C_k$ is an internal central product of $\E,\C_1,\C_2,\dots,\C_k$ with $\Comp(\E*\C_1*\C_2*\cdots *\C_k)=\Comp(\E)\cup\{\C_1,\dots,\C_k\}$.
\end{itemize}
\end{lemma}

\begin{proof}
\textbf{(a)} If $\C\in\Comp(\E)$, then $\C\subn\E\subn\F$ and thus $\C\in\Comp(\F)$. So 
\[\Comp(\E)\subseteq\{\C\in\Comp(\F)\colon\C\subn\E\}\subseteq\{\C\in\Comp(\F)\colon \C\subseteq\E\}\subseteq\{\C\in\Comp(\F)\colon S\cap\C\leq S\cap\E\}.\]
Let now $\C\in\Comp(\F)$ with $S\cap\C\leq S\cap\E$. Then $\C\wedge\E$ is by Proposition~\ref{P:IntersectionsSubnormal}(a) a subsystem over $S\cap\C$ which is subnormal in $\C$ and $\E$. By Lemma~\ref{L:QuasisimpleFusion}, $S\cap\C$ is non-abelian and every subnormal subsystem of $\C$ is equal to $\C$ or contained in $\F_{Z(\C)}(Z(\C))$. Thus, $\C\wedge\E=\C$. In particular $\C\subn\E$ and thus $\C\in\Comp(\E)$. This proves (a).

\smallskip

\textbf{(b)} Let $\C_1,\dots,\C_k$ be as in (b). The map $\hat{\Psi}_\L\colon \{\H\colon\H\subn\L\}\rightarrow \{\m{D}\colon\m{D}\subn\F\},\H\mapsto \F_{S\cap\H}(\H)$ is by Proposition~\ref{P:psihat}(a) a bijection, which restricts by Lemma~\ref{GeneralizedFitting}(a) to a bijection between $\Comp(\L)$ and $\Comp(\F)$. So $\H:=\hat{\Psi}_\L^{-1}(\E)\subn\L$ and $\K_i:=\hat{\Psi}_\L^{-1}(\C_i)\in\Comp(\L)$ for $i=1,\dots,k$. By \cite[Theorem~11.18(c)]{Henke:Regular}, $\M:=\H\prod_{i=1}^k\K_i$ is a partial subnormal subgroup of $\L$ which is an internal central product of $\H$, $\K_1,\dots,\K_k$ with $\Comp(\M)=\Comp(\H)\cup\{\K_1,\dots,\K_k\}$. So it follows from Lemma~3.5, Lemma~4.8 and Lemma~4.9 in \cite{Henke:Regular} that $\H,\K_1,\dots,\K_k$ are partial normal subgroups of $\M$ which commute pairwise. Notice that $\hat{\Psi}_\L$ restricts to $\hat{\Psi}_\M$ and thus, by Proposition~\ref{P:psihat}(a), to $\Psi_\M$. Now $\Psi_\M(\M)=\hat{\Psi}_\L(\M)\subn\F$, $\Psi_\M(\H)=\hat{\Psi}_\M(\H)=\E$ and $\Psi_\M(\K_i)=\hat{\Psi}_\M(\K_i)=\C_i$. By Corollary~\ref{C:RegularSubnormal}, $\M$ is a regular locality over $\Psi_\M(\M)$. So by  Proposition~\ref{P:N1N2Perpendicular}, $\E,\C_1,\dots,\C_k$ centralize each other in $\Psi_\M(\M)$ and thus in $\F$; moreover, $\Psi_\M(\M)=\E*\C_1*\C_2*\cdots\C_k$  is an internal central product of $\E,\C_1,\dots,\C_k$. By Lemma~\ref{GeneralizedFitting}(a), $\hat{\Psi}_\M$ induces a bijection between $\Comp(\M)$ and $\Comp(\Psi_\M(\M))$; similarly, $\hat{\Psi}_\H$ induces a bijection between $\Comp(\H)$ and $\Comp(\E)$. Thus, as $\hat{\Psi}_\M$ restricts to $\hat{\Psi}_\H$ and $\Comp(\M)=\Comp(\H)\cup\{\K_1,\dots,\K_k\}$, part (b) follows. 
\end{proof}

\begin{corollary}\label{C:LayerNFRCFR}
For every $R\in \F^f$, $\Comp(N_\F(R))=\Comp(C_\F(R))$ and $E(N_\F(R))=E(C_\F(R))$. 
\end{corollary}

\begin{proof}
 By Lemma~\ref{L:CFRNFR}, $C_\F(R)\unlhd N_\F(R)$ and by Lemma~\ref{GeneralizedFitting}(c), $E(N_\F(R))\subseteq C_{N_\F(R)}(O_p(N_\F(R)))\subseteq C_\F(R)$. Hence, it follows from Lemma~\ref{L:SubnormalComponents}(a) that $\Comp(N_\F(R))=\Comp(C_\F(R))$. Now the assertion follows from Theorem~\ref{T:AschbacherEF}(b).
\end{proof}

\begin{lemma}\label{L:ComponentsResiduesFusion}
If $\E\unlhd\F$ is a saturated subsystem of $\F$ of $p$-power index or of index prime to $p$, then $\Comp(\F)=\Comp(\E)$ and $E(\F)=E(\E)$. In particular,
\[\Comp(\F)=\Comp(O^p(\F))=\Comp(O^{p^\prime}(\F))\mbox{ and }E(\F)=E(O^p(\F))=E(O^{p^\prime}(\F)).\]
\end{lemma}

\begin{proof}
Assume first that $\E\unlhd\F$ is of $p$-power index or of index prime to $p$. Let $\H:=\Psi_\L^{-1}(\E)$. By Proposition~\ref{Oupperp}(a),(b), $\H$ has $p$-power index or index prime to $p$ in $\L$. Hence, by \cite[Lemma~11.14]{Henke:Regular}, $\Comp(\H)=\Comp(\L)$. By Theorem~\ref{T:mainRegularPartialNormal}(a), $\H$ is a regular locality over $\E$. As the map $\hat{\Psi}_\L$ from Proposition~\ref{P:psihat} restricts to $\hat{\Psi}_\H$, it follows from Lemma~\ref{GeneralizedFitting}(a) that $\Comp(\F)=\{\hat{\Psi}_\L(\K)\colon \K\in\Comp(\L)\}=\{\hat{\Psi}_\H(\K)\colon\K\in\Comp(\H)\}=\Comp(\E)$. Hence, $E(\F)=E(\E)$ by Theorem~\ref{T:AschbacherEF}(b).
This proves in particular that $\Comp(\F)=\Comp(O^p(\F))=\Comp(O^{p^\prime}(\F))\mbox{ and }E(\F)=E(O^p(\F))=E(O^{p^\prime}(\F))$.

\smallskip

Suppose now that $\E$ is an arbitrary saturated subsystem of $\F$ of $p$-power index or of index prime to $p$. If $\E$ has $p$-power index, then by Lemma~\ref{L:pPowerIndexProducts}, $O^p(\E)=O^p(\F)$, so $\Comp(\F)=\Comp(O^p(\F))=\Comp(O^p(\E))=\Comp(\E)$ and $E(\F)=E(\E)$. By \cite[Theorem~I.7.7]{Aschbacher/Kessar/Oliver:2011}, $O^{p^\prime}(\F)$ (as we defined it in Subsection~\ref{SS:ResiduesFusionLoc}) 
is the smallest saturated subsystem of $\F$ of index prime to $p$. Hence, if $\E$ has index prime to $p$, then $O^{p^\prime}(\E)=O^{p^\prime}(\F)$. Hence, $\Comp(\E)=\Comp(O^{p^\prime}(\E))=\Comp(O^{p^\prime}(\F))=\Comp(\F)$ and $E(\E)=E(\F)$. 
\end{proof}

\begin{corollary}
 Let $\E$ be a normal subsystem of $\F$ over $T\leq S$. Then 
\[\Comp(C_\F(\E))=\Comp(C_\F(T))=\Comp(N_\F(T))\mbox{ and }E(C_\F(\E))=E(C_\F(T))=E(N_\F(T)).\]
\end{corollary}

\begin{proof}
 By definition, $C_\F(\E)$ is a saturated subsystem of $C_\F(T)$ over $C_S(\E)$ of $p$-power index (see Subsection~\ref{SS:CFE}). Hence, $\Comp(C_\F(\E))=\Comp(C_\F(T))$ and $E(C_\F(\E))=E(C_\F(T))$ by Lemma~\ref{L:ComponentsResiduesFusion}. Corollary~\ref{C:LayerNFRCFR} yields now the assertion. 
\end{proof}

\begin{lemma}[{Aschbacher \cite[14.2]{Aschbacher:2011}}]
The following conditions are equivalent:
\begin{itemize}
 \item[(i)] $\F$ is constrained;
 \item[(ii)] $F^*(\F)=\F_{O_p(\F)}(O_p(\F))$;
 \item[(iii)] $\Comp(\F)=\emptyset$. 
\end{itemize}
\end{lemma}

\begin{proof}
By \cite[Lemma~11.6]{Henke:Regular}, (i) is equivalent to $F^*(\L)=O_p(\L)$ and to $\Comp(\L)=\emptyset$. By \cite[Proposition~5]{Henke:2015} and Lemma~\ref{L:OpL}, $O_p(\L)=O_p(\F)$. So the assertion follows from Lemma~\ref{GeneralizedFitting}.
\end{proof}

\subsection{The proof of Theorem~\ref{main}}\label{SS:TheoremAProof}

We have now revisited the background on fusion systems which is necessary to complete the proof of  Theorem~\ref{main}. The essential part that was missing before is the following lemma.

\begin{lemma}\label{L:mainHelp}
Let $\E\unlhd \F$. Then $Q_1Q_2O_p(\F)\in\F^*(\F)^{cr}$ for all $Q_1\in E(\E)^{cr}$ and $Q_2\in E(C_\F(\E))^{cr}$. 
\end{lemma}

\begin{proof}
By Lemma~\ref{L:EEtimeECFEEF}(c), we have $E(\F)=E(\E)*E(C_\F(\E))$ and $F^*(\F)=E(\F)*\F_{O_p(\F)}(O_p(\F))$. Therefore, applying \cite[Lemma~2.2, Lemma~2.14(f)]{Henke:Regular} twice, we get first $Q_1Q_2\in E(\F)^{cr}$ and then $Q_1Q_2O_p(\F)\in F^*(\F)^{cr}$. 
\end{proof}

\begin{proof}[Proof of Theorem~\ref{main}] 
Let $\Psi_\L$ be as in Theorem~\ref{mainStepIV} and notice that part (a) of this theorem asserts that Theorem~\ref{main}(a) holds. Recall Notation~\ref{N:PcapE}. By Lemma~\ref{L:GeneralizedFitting}(a), $C_S(F^*(\F))\subseteq S\cap F^*(\F)$. Therefore, it follows from Lemma~\ref{L:EcsubsetFq} that $F^*(\F)^{cr}\subseteq F^*(\F)^c\subseteq\F^q$. Similarly, $F^*(\F)^{cr}\subseteq F^*(\F)^s\subseteq\delta(\F)$. Hence, for the proof of (b) it remains to show the following: If $F^*(\F)^{cr}\subseteq\Delta$, then $\Psi_\L(\N)=\F_{S\cap\N}(\N)$ for all $\N\unlhd\L$. 

\smallskip

So suppose now that $F^*(\F)^{cr}\subseteq\Delta$. Let $\E$ be a normal subsystem of $\F$. If $P_1\in\E^{cr}$ and $P_2\in C_\F(\E)^{cr}$, then 
it follows from \cite[Lemma~1.20(d)]{AOV1} that $Q_1:=P_1\cap E(\E)\in E(\E)^{cr}$ and $Q_2:=P_2\cap E(C_\F(\E))\in E(C_\F(\E))^{cr}$. Hence, Lemma~\ref{L:mainHelp} gives that $Q_1Q_2O_p(\F)\in F^*(\F)^{cr}\subseteq \Delta$. As $\Delta$ is overgroup-closed, this implies $P_1P_2O_p(\F)\in\Delta$. Therefore, it follows from Theorem~\ref{mainStepII} that there exists a unique partial normal subgroup $\N$ of $\L$ with $\F_{S\cap\N}(\N)=\E$. As $\E\unlhd\F$ was arbitrary, this means that there is a map
\[\Theta\colon \fN(\F)\rightarrow\fN(\L),\E\mapsto\Theta(\E)\]
such that $\E=\F_{S\cap\Theta(\E)}(\Theta(\E))$ for all $\E\unlhd\F$. Such a map is clearly injective. As $\Psi_\L$ is bijective, we have $|\fN(\L)|=|\fN(\F)|$. Hence, $\Theta$ is bijective as well. Observe now that $\Theta^{-1}(\N)=\F_{S\cap \N}(\N)$ for all $\N\unlhd\L$. In particular, for all $\N\in\fN(\L)$, we have $\F_{S\cap\N}(\N)\unlhd\F$. Thus, $\F_{S\cap\N}(\N)$ is equal to $\Psi_\L(\N)$ by the characterization of $\Psi_\L(\N)$ given in Theorem~\ref{mainStepIV}(a).

\smallskip

If $(\fN(\F),\unlhd)$ is a poset, then Theorem~\ref{main}(c) is equivalent to Theorem~\ref{mainStepIV}(d),(e). Clearly, the normality relation on $\fN(\F)$ is reflexive and antisymmetric. To prove transitivity let $\E_1,\E_2,\E_3\in\fN(\F)$ with $\E_1\unlhd\E_2$ and $\E_2\unlhd\E_3$. Then by Theorem~\ref{mainStepIV}(d), $\Psi_\L^{-1}(\E_1)\subseteq \Psi_\L^{-1}(\E_2)\subseteq\Psi_\L^{-1}(\E_3)$. Hence, by Theorem~\ref{mainStepIV}(e), $\E_1=\Psi_\L(\Psi_\L^{-1}(\E_1))\unlhd\Psi_\L(\Psi_\L^{-1}(\E_3))=\E_3$. This shows that $(\fN(\F),\unlhd)$ is a poset and completes thus the proof.
\end{proof}

\subsection{An alternative characterization of $\delta(\F)$}

As a consequence of Lemma~\ref{GeneralizedFitting}(c) we can characterize the set $\delta(\F)$ now without mentioning localities at all. Recall Notation~\ref{N:PcapE}.

\begin{lemma}\label{L:deltaF}
$\delta(\F)=\{P\leq S\colon P\cap F^*(\F)\in\F^s\}=\{P\leq S\colon P\cap F^*(\F)\in F^*(\F)^s\}$.
\end{lemma}

\begin{proof}
By Lemma~\ref{L:Regular10.4}, there exists a regular locality $(\L,\Delta,S)$ over $\F$. Hence, the assertion follows from Lemma~\ref{GeneralizedFitting}(c) combined with Lemma~10.2 and  Lemma~10.11(f) in \cite{Henke:Regular} (see also \cite[Remark~10.12]{Henke:Regular}).
\end{proof}

The following lemma will be useful in the next subsection.

\begin{lemma}\label{deltaLemma}
$\delta(E(\F))=E(\F)^s\subseteq F^*(\F)^s\subseteq \delta(\F)$. 
\end{lemma}

\begin{proof}
Lemma~\ref{L:deltaF} gives $F^*(\F)^s\subseteq\delta(\F)$. As $F^*(E(\F))=E(\F)$ by Theorem~\ref{T:AschbacherEF}(c), it follows also from Lemma~\ref{L:deltaF} that  $\delta(E(\F))=E(\F)^s$. By Theorem~\ref{T:AschbacherEF}(d),   
 $E(\F)=O^p(F^*(\F))$ has $p$-power index in $F^*(\F)$. Therefore, \cite[Proposition~3(d)]{Henke:2015} yields $E(\F)^s\subseteq F^*(\F)^s$. 
\end{proof}

\section{Products with $p$-subgroups}

\textbf{Throughout let $\F$ be a saturated fusion system over $S$.}

\begin{lemma}\label{ProductPSubgroupRegularLocality1}
 Let $\E$ be a normal subsystem of $\F$ over $T$. Then the following hold:
\begin{itemize}
 \item [(a)] $\Comp(\E S)=\Comp(\E)$ and $E(\E S)=E(\E)$. 
 \item [(b)] $\delta(\F)\subseteq \delta(\E S)$.
 \item [(c)] For every $P\in\delta(\E S)$, we have $PO_p(\E S)\in\delta(\F)$.
 \item [(d)] Let $(\L,\Delta,S)$ be a regular locality over $\F$ and suppose $\N$ is a partial normal subgroup of $\L$ with $\E=\F_{S\cap\N}(\N)$. Then $(\N R,\delta(\E R),TR)$ is a regular locality over $\E R$ for every $R\leq S$.
\end{itemize}
\end{lemma}

\begin{proof}
\textbf{(a)} By Lemma~\ref{L:pPowerIndexProducts} and Lemma~\ref{CSENormalER}, $\E$ is a normal subsystem of $\E S$ of $p$-power index. Hence, by
Lemma~\ref{L:ComponentsResiduesFusion}, we have $E(\E S)=E(\E)$. So (a) holds.

\smallskip

For the proof of the remaining parts let $(\L,\Delta,S)$ be a regular locality over $\F$ (which exists always by Lemma~\ref{L:Regular10.4}). By Theorem~\ref{mainStepIV} (in particular by part (b) of that theorem), there exists furthermore $\N\unlhd\L$ with $\F_{S\cap \N}(\N)=\Psi_\L(\N)=\E$. By Corollary~\ref{C:ERNR0}, $(\N S,\Delta,S)$ is a proper locality over $\E S$. Set
\[S_1:=S\cap E(C_\F(\E))\mbox{ and }S_0:=S_1O_p(\F).\]
It follows from Lemma~\ref{L:EEtimeECFEEF}(a) that $S_1\unlhd S$. Moreover, by \cite[Proposition~4]{Henke:2015}, $S_1\leq C_S(\E)=C_S(\N)$ and so $\N\subseteq C_\L(S_1)\subseteq N_\L(S_1)$ by \cite[Lemma~3.5]{Henke:Regular}. By \cite[Lemma~5.5]{Henke:2015}, $N_\L(S_1)$ is a partial subgroup of $\L$. Thus, $\N S\subseteq N_\L(S_1)$. By \cite[Proposition~5]{Henke:2015} and Lemma~\ref{L:OpL}, $O_p(\F)=O_p(\L)\unlhd\N S$. So it follows 
\begin{equation}\label{E:S0Normal}
S_0\unlhd\N S\mbox{ and thus }S_0\unlhd \E S=\F_S(\N S).  
\end{equation}
Notice also that, by Lemma~\ref{L:EEtimeECFEEF}(c) and Lemma~\ref{L:StarProdBasic}(c), 
\begin{equation}\label{E:EEstarD}
F^*(\F)=E(\E)*\m{D}\mbox{ where }\m{D}:=E(C_\F(\E))*\F_{O_p(\F)}(O_p(\F)).
\end{equation}
Observe moreover that $\m{D}$ is a fusion system over $S_0$.

\smallskip

\textbf{(b)} Let $P\in \delta(\F)$ and set $\tilde{P}:=PS_0$. As $\delta(\F)$ is overgroup closed, we have $\tilde{P}\in\delta(\F)$ and thus $\tilde{P}\cap F^*(\F)\in F^*(\F)^s$ by Lemma~\ref{L:deltaF}. By \eqref{E:EEstarD} $S\cap F^*(\F)=(S\cap E(\E))S_0$ and thus $\tilde{P}\cap F^*(\F)=(\tilde{P}\cap E(\E))S_0$. Thus, it follows from \eqref{E:EEstarD} and \cite[Lemma~2.14(g)]{Henke:Regular} that $\tilde{P}\cap E(\E)\in E(\E)^s$. Hence, by (a) and Lemma~\ref{deltaLemma}, we have $\tilde{P}\cap E(\E S)\in E(\E S)^s\subseteq \delta(\E S)$. As $\delta(\E S)$ is overgroup closed, it follows $\tilde{P}\in\delta(\E S)$. As $S_0$ is by \eqref{E:S0Normal} normal in $\N S$ and $(\N S,\Delta,S)$ is a proper locality over $\E S$, \cite[Lemma~7.3]{ChermakIII} or \cite[Lemma~10.6]{Henke:Regular} yields now $P\in\delta(\E S)$. This proves (b).

\smallskip

\textbf{(c)} Let $P\in\delta(\E S)$ and set $\hat{P}:=PO_p(\E S)$. As $\delta(\E S)$ is overgroup closed, we have $\hat{P}\in\delta(\E S)$ and thus, by Lemma~\ref{L:deltaF}, $\hat{P}\cap F^*(\E S)\in F^*(\E S)^s$. By (a) and Theorem~\ref{T:AschbacherEF}(e), $F^*(\E S)=E(\E)*\F_{O_p(\E S)}(O_p(\E S))$, so in particular, $\hat{P}\cap F^*(\E S)=(\hat{P}\cap E(\E))O_p(\E S)$. Now \cite[Lemma~2.14(g)]{Henke:Regular} yields $\hat{P}\cap E(\E)\in E(\E)^s$. By \eqref{E:S0Normal}, $S_0\leq O_p(\E S)\leq \hat{P}$. As $\hat{P}\cap E(\E)\in E(\E)^s$, it follows thus from \eqref{E:EEstarD} and \cite[Lemma~2.14(g)]{Henke:Regular} that $\hat{P}\cap F^*(\F)=(\hat{P}\cap E(\E))S_0\in F^*(\F)^s$. Thus, by Theorem~\ref{L:deltaF}, $\hat{P}\in\delta(\F)$. This shows (c). 

\smallskip

\textbf{(d)} Recall from above that $(\L,\Delta,S)$ is a regular locality over $\F$, in particular $\Delta=\delta(\F)$. Moreover, $(\N S,\Delta,S)$ is a proper locality over $\E S$. In particular, by \cite[Proposition~5]{Henke:2015}, $O_p(\E S)\unlhd \N S$. It follows from (b) and (c)  and from \cite[Lemma~7.3]{ChermakIII} or \cite[Lemma~10.6]{Henke:Regular} that 
\[\delta(\E S)=\{P\leq S\colon PO_p(\E S)\in\delta(\F)\}.\]
Thus, Lemma~\ref{L:VaryObjects} yields that $(\N S,\delta(\E S),S)$ is a proper locality and hence
\begin{equation}\label{E:NSRegular}
(\N S,\delta(\E S),S)\mbox{ is a regular locality over $\E S$.}
\end{equation}
Let now $R\leq S$. As $S$ is a $p$-group, $R$ is subnormal in $S$. If $R=R_0\unlhd R_1\unlhd\dots\unlhd R_n=S$ is a subnormal series, then by Lemma~\ref{ProductSubnormal}, $\N R=\N R_0\unlhd \N R_1\unlhd \dots \unlhd \N R_n=\N S$. It follows thus from \eqref{E:NSRegular} and Corollary~\ref{C:RegularSubnormal} that $\F_0:=\F_{T R}(\N R)$ is saturated and $(\N R,\delta(\F_0),TR)$ is a regular locality over $\F_0$. By Lemma~\ref{ProductWithPSubgroup}, $\F_0\subseteq \E R$ and thus $O^p(\F_0)\subseteq O^p(\E R)=O^p(\E)$. As $\E\subseteq \F_0$, it follows $O^p(\F_0)=O^p(\E)$. By \cite[Theorem~1]{Henke:2013}, $\E R$ is the unique saturated fusion system $\mathcal{D}$ on $TR$ with $O^p(\mathcal{D})=O^p(\E)$. Hence, $\F_0=\E R$. This shows the assertion.
\end{proof}




\begin{proof}[Proof of Theorem~\ref{T:mainProductWithPSubgroup}]
Property (a) is Proposition~\ref{ProductPSubgroupRegularLocality1}(d) and property (c) is Corollary~\ref{C:ERNR0}. Thus it remains to show part (b). For that let $(\L,\Delta,S)$ be a proper locality over $\F$ such that $\delta(\F)\subseteq\Delta$. Let moreover $\N\unlhd\L$, $R\leq S$, $T:=S\cap\N$ and $\E:=\F_T(\N)$.

\smallskip

Notice that $\L_0:=\L|_{\delta(\F)}$ is a regular locality over $\F$. Set  $\N_0:=\N\cap \L_0$ and observe $T=\N_0\cap S$. By Theorem~\ref{mainStepIV}(b),(c), $\E=\Psi_\L(\N)=\Psi_{\L_0}(\N_0)=\F_T(\N_0)$ is normal in $\F$. In particular, $\E R$ is well-defined.  By Proposition~\ref{ProductPSubgroupRegularLocality1}(d), $\E R=\F_{T R}(\N_0 R)$. So by Lemma~\ref{ProductWithPSubgroup}, $\F_{TR}(\N R)\subseteq \E R=\F_{TR}(\N_0 R)\subseteq \F_{TR}(\N R)$ and thus $\F_{TR}(\N R)=\E R$.

\smallskip

By Corollary~\ref{C:ERNR0}, $(\N S,\Delta,S)$ is a proper locality over $\E S$. In particular, by \cite[Proposition~5]{Henke:2015}, $X:=O_p(\E S)$ is normal in $\N S$. Set $\Delta^+:=\Delta\cup\delta(\E S)$. By Lemma~\ref{ProductPSubgroupRegularLocality1}(c), we have $PX\in\delta(\F)\subseteq\Delta$ for every $P\in\delta(\E S)$. So using that $\Delta$ is overgroup closed in $S$, we can conclude that
\[\Delta\subseteq\Delta^+\subseteq \{P\leq S\colon PX\in\Delta\}.\]
Notice also that $\Delta^+$ is $\E S$-closed, as $\Delta$ and $\delta(\E S)$ are $\E S$-closed (cf. Lemma~\ref{L:Regular10.4}). Hence, it follows from Lemma~\ref{L:VaryObjects} that $(\N S,\Delta^+,S)$ is a proper locality over $\E S$ and so the assertion holds.
\end{proof}

\section{Products of normal subsystems}

\begin{theorem}\label{ProductTheorem}
Let $(\L,\Delta,S)$ be a regular locality over $\F$. Suppose $\N_1,\N_2,\dots,\N_k$ are partial normal subgroups of $\L$. For each $i=1,2,\dots,k$ set $T_i:=\N_i\cap S$ and $\E_i:=\F_{T_i}(\N_i)$. Furthermore, set 
\[\N=\N_1\N_2\cdots\N_k\mbox{ and }T:=T_1 T_2\dots T_k.\]
Then $\E_1,\E_2,\dots,\E_k$ are normal in $\F$ and the following hold:
\begin{itemize}
\item [(a)] $\N$ is a partial normal subgroup of $\L$ with $\N\cap S=T$, and $\F_T(\N)$ is the unique smallest normal subsystem of $\F$ over $T$ containing each $\E_i$ as a normal subsystem ($i=1,2,\dots,k$). 
\item [(b)] If $\tE\subn\F$ with $\E_i\subn\tE$ for all $i=1,2,\dots,k$, then $\E_i\unlhd\tE$ for all $i=1,2,\dots,k$ and $\F_T(\N)\unlhd\tE$.
\item [(c)] $\F_T(\N)=\<\F_T(\N_1 T),\F_T(\N_2 T),\dots,\F_T(\N_kT)\>=\<\E_1T,\E_2T,\dots,\E_kT\>$. 
\end{itemize}
\end{theorem}

\begin{proof}
\textbf{(a,b)} By Theorem~\ref{T:ProductsPartialNormal}(a), $\N$ is a partial normal subgroup of $\L$ with $\N\cap S=T$. By Theorem~\ref{mainStepIV}, in particular by part (b) of that theorem, $\E:=\F_T(\N)=\Psi_\L(\N)\unlhd\F$ and similarly $\E_i=\Psi_\L(\N_i)\unlhd\F$ for $i=1,2,\dots,k$. Clearly, $\m{E}$ is a subsystem over $T$. As $\N_i\subseteq\N$, it follows from Theorem~\ref{mainStepIV}(e) that $\E_i$ is normal in  $\m{E}$ for $i=1,\dots,k$. Thus (a) holds if (b) is true.

\smallskip

Let now $\tE$ be a subnormal subsystem of $\F$ in which $\E_1,\E_2,\dots,\E_k$ are subnormal. Consider the map $\hat{\Psi}_\L$ from Proposition~\ref{P:psihat}, which restricts to $\Psi_\L$. Proposition~\ref{P:psihat}(b) gives then $\N_i=\Psi_\L^{-1}(\E_i)=\hat{\Psi}_\L^{-1}(\E_i)\subseteq\hat{\Psi}_\L^{-1}(\tE)$ for each $i=1,2,\dots,k$. This implies $\N_i\unlhd \hat{\Psi}_\L^{-1}(\tE)$ and $\N=\N_1\N_2\cdots\N_k\unlhd \Psi_\L^{-1}(\tE)$. So Proposition~\ref{P:psihat}(c) yields $\E_i=\hat{\Psi}_\L(\N_i)\unlhd\tE$ and $\E=\Psi_\L(\N)=\hat{\Psi}_\L(\N)\unlhd \tE$. This shows (b) and thus (a) holds as well.

\smallskip

\textbf{(c)} By Proposition~\ref{ProductPSubgroupRegularLocality2}, we have $\F_T(\N_iT)=\E_iT$ for $i=1,2,\dots,k$. Moreover, as $\N_iT\subseteq \N$, it follows $\F_T(\N_iT)\subseteq\E$ for $i=1,2,\dots,k$. Hence, 
\[\<\E_1T,\E_2T,\dots,\E_kT\>=\<\F_T(\N_1T),\F_T(\N_2T),\dots,\F_T(\N_kT)\>\subseteq\E.\]
The fusion system $\E$ is by definition generated by the group homomorphisms which are of the form $c_f|_{S_f\cap T}\colon S_f\cap T\rightarrow T$ with $f\in\N$. Fix $f\in\N$. By Theorem~\ref{T:ProductsPartialNormal}(d), there exist $n_i\in\N_i$ for $i=1,\dots,k$ such that $f=\Pi(n_1,n_2,\dots,n_k)$ and $S_f=S_{(n_1,n_2,\dots,n_k)}$. As $T$ is strongly $\F$-closed by \cite[Lemma~3.1(a)]{Chermak:2015}, it follows that $c_f|_{S_f\cap T}$ is the composition of restrictions of the morphisms  $c_{n_i}|_{S_{n_i}\cap T}$ ($i=1,\dots,k$). As $c_{n_i}|_{S_{n_i}\cap T}$ is a morphism in $\F_T(\N_i T)$ for each $i=1,\dots,k$, it follows that $\E\subseteq\<\F_T(\N_1T),\F_T(\N_2T),\dots,\F_T(\N_k T)\>$. Hence
\[\E=\<\F_T(\N_1T),\F_T(\N_2T),\dots,\F_T(\N_kT)\>=\<\E_1T,\E_2T,\dots,\E_kT\>.\]
This shows the assertion.
\end{proof}

The following theorem is an extended version of Theorem~\ref{ProductCorollary}.

\begin{theorem}\label{T:ProductCorollaryDetails}
Let $\F$ be a saturated fusion system over $S$. Let $k\geq 1$ and let $\m{E}_i$ be a normal subsystem of $\F$ over $T_i$ for $i=1,2,\dots,k$. Set $T:=T_1T_2\cdots T_k$. Then there exists a subsystem $\E_1\E_2\cdots\E_k$ of $\F$ such that the following hold:
\begin{itemize}
\item [(a)] $\E_1\E_2\cdots\E_k$ is the unique smallest normal subsystem of $\F$ over $T$ containing each $\E_i$ as a normal subsystem ($i=1,2,\dots,k$).
\item [(b)] If $\tE\subn\F$ such that $\E_i\subn\tE$ for all $i=1,2,\dots,k$, then $\E_1\E_2\cdots\E_k\unlhd\tE$.
\item [(c)] $\E_1\E_2\cdots\E_k$ is generated by $\E_1T,\E_2T,\dots,\E_kT$. In particular, setting 
\[\Ac_i(P):=\<\phi\in \Aut_\F(P):\phi\mbox{ $p^\prime$-element, }[P,\phi]\leq P\cap T_i\mbox{ and }\phi|_{P\cap T_i}\in \Aut_{\E_i}(P\cap T_i)\>\]
for each $i=1,2,\dots,k$ and each $P\leq T$, it follows that 
\[\E_1\E_2\cdots\E_k=\<\Ac_i(P)\colon i\in\{1,2,\dots,k\},\;P\leq T\mbox{ with }P\cap T_i\in\E_i^c\>_T\] 
\item [(d)] The product of normal subsystems is associative, i.e. if $1\leq l<k$, then \[\E_1\E_2\cdots\E_k=(\E_1\cdots\E_l)(\E_{l+1}\cdots\E_k).\]
\item [(e)] If $\E_1,\E_2,\dots,\E_k$ centralize each other in $\F$, then $\E_1\E_2\cdots\E_k=\E_1*\E_2*\cdots *\E_k$ is an internal central product of $\E_1,\E_2,\dots,\E_k$ (as defined in Definition~\ref{D:InternalCentralProduct}).
\item [(f)] Suppose $(\L,\Delta,S)$ is a proper locality over $\F$. If $\Psi_\L$ is the map from Theorem~\ref{main} and  $\N_i:=\Psi_\L^{-1}(\E_i)$ for $i=1,2,\dots,k$, then  
\[\Psi_\L(\N_1\N_2\cdots\N_k)=\E_1\E_2\cdots\E_k.\] 
\end{itemize}
\end{theorem}

\begin{proof}
By Lemma~\ref{L:Regular10.4}, there exists a regular locality $(\L^\delta,\delta(\F),S)$ over $\F$. By Theorem~\ref{mainStepIV} (in particular by part (b) of that theorem), there exist partial normal subgroups $\N_i^\delta\unlhd\L^\delta$ with $\N_i^\delta\cap S=T_i$ and $\F_{T_i}(\N_i^\delta)=\Psi_{\L^\delta}(\N_i^\delta)=\m{E}_i$ for $i=1,2,\dots,k$. Set now
\[\E_1\E_2\cdots\E_k:=\F_T(\N_\delta).\] 
where
\[\N^\delta:=\N_1^\delta\N_2^\delta\cdots\N_k^\delta,\mbox{ and }T:=T_1T_2\cdots T_k.\]
\textbf{(a,b,c,d,e)} With this definition of $\E_1\E_2\cdots\E_k$ it follows from Theorem~\ref{ProductTheorem} that parts (a) and (b) of the theorem hold and moreover
\[\E_1\E_2\cdots\E_k=\<\E_1T,\E_2T,\dots,\E_kT\>.\]
Using Definition~\ref{D:Product}, the latter equality implies part (c). It follows from Theorem~\ref{T:ProductsPartialNormal}(b) that part (d) holds. Moreover, Proposition~\ref{P:N1N2Perpendicular} yields (e).

\smallskip

\textbf{(f)} Let $(\L,\Delta,S)$ be a proper locality over $\F$. By Theorem~\ref{T:VaryObjects}(d), there exists a subcentric locality $(\L^s,\F^s,S)$ over $\F$ such that $\L=\L^s|_\Delta$. As $\F^{cr}\subseteq\delta(\F)$ and $\delta(\F)$ is $\F$-closed by Lemma~\ref{L:Regular10.4}, the restriction $\L^s|_{\delta(\F)}$ is defined. Clearly, $(\L^s|_{\delta(\F)},\delta(\F),S)$ is a regular locality over $\F$. Notice that either of the properties (a) or (c) implies that the definition of $\E_1\E_2\cdots\E_k$ does not depend on the choice of $(\L^\delta,\delta(\F),S)$. Thus, we may assume $\L^\delta=\L^s|_{\delta(\F)}$.

\smallskip

We will consider now the bijections $\Phi_{\L^s,\L}$ and $\Phi_{\L^s,\L^\delta}$ given by Theorem~\ref{T:VaryObjects}(b) and the bijections $\Psi_\L$ and $\Psi_{\L^s}$ given by Theorem~\ref{mainStepIV}. By part (c) of the latter theorem, we have
\[\Psi_{\L^s}=\Psi_\L\circ \Phi_{\L^s,\L}=\Psi_{\L^\delta}\circ \Phi_{\L^s,\L^\delta}.\]
Set $\N_i:=\Psi_\L^{-1}(\E_i)$ and $\N_i^s:=\Phi_{\L^s,\L}^{-1}(\N_i)$ for $i=1,2,\dots,k$. Put furthermore
\[\N:=\N_1\N_2\cdots\N_k\mbox{ and }\N^s:=\N_1^s\N_2^s\cdots\N_k^s.\]
By definition of $\N_i^s$, we have $\N_i^s\cap\L=\N_i$ for $i=1,2,\dots,k$. Hence, it follows from Corollary~\ref{C:ProductRestrictions} that $\N^s\cap\L=\N$ and thus 
\[\Phi_{\L^s,\L}(\N^s)=\N.\]
Observe also that
\[\N_i^s=\Phi_{\L^s,\L}^{-1}(\N_i)=\Phi_{\L^s,\L}^{-1}(\Psi_\L^{-1}(\E_i))=\Psi_{\L^s}^{-1}(\E_i)=\Phi_{\L^s,\L^\delta}^{-1}(\Psi_{\L^\delta}^{-1}(\E_i))=
 \Phi_{\L^s,\L^\delta}^{-1}(\N_i^\delta).
\]
Hence, $\N_i^s\cap\L^\delta=\N_i^\delta$. Thus, again by Corollary~\ref{C:ProductRestrictions}, $\N^s\cap \L^\delta=\N^\delta$, i.e. $\Phi_{\L^s,\L^\delta}(\N^s)=\N^\delta$. We obtain now
\[\Psi_\L(\N)=\Psi_{\L^s}(\Phi_{\L^s,\L}^{-1}(\N))=\Psi_{\L^s}(\N^s)=\Psi_{\L^s}(\Phi_{\L^s,\L^\delta}^{-1}(\N^\delta))=\Psi_{\L^\delta}(\N^\delta)=\E_1\E_2\cdots\E_k,\]
where the last equality uses Theorem~\ref{mainStepIV}(b). This shows (f) and completes thus the proof.
\end{proof}

We caution the reader that, while $\E_1\E_2\cdots\E_k$ is the smallest normal subsystem of $\F$ in which $\E_1,\E_2,\dots,\E_k$ are normal, there may exist a smaller normal subsystem containing $\E_1,\E_2,\dots,\E_k$. This is illustrated by the following example.

\begin{ex}
Fix the notation as in Example~\ref{E:1}. Set moreover $\E_i=\F_{T_i}(G_i)$ for $i=1,2$. Notice that $G_i\unlhd G$ and thus $\E_i\unlhd\F$ for $i=1,2$. Moreover, by Theorem~\ref{T:ProductCorollaryDetails}(f), $\E_1\E_2=\F_S(G_1G_2)=\F_S(G)=\F$. Observe that $\E_1$ and $\E_2$ are contained in $\F_S(N)$ and, as $N\unlhd G$, we have $\F_S(N)\unlhd \F$. However, $\F_S(N)$ is properly contained in $\F=\E_1\E_2$. 
\end{ex}

\bibliographystyle{amsalpha}
\bibliography{repcoh}

\end{document}